\documentclass[10pt,reqno,a4paper]{article} 

\usepackage{anysize}
\marginsize{3.0cm}{3.0cm}{2.2cm}{2.2cm}

 \usepackage{verbatim}
\usepackage[english]{babel}
\usepackage[centertags]{amsmath}
\usepackage{amsfonts,amssymb,amsthm}
\usepackage[svgnames]{xcolor}
\usepackage{comment}
\usepackage{hyperref} 
\usepackage{mathrsfs}
\usepackage[sans]{dsfont}
\usepackage{accents}

\let\OLDthebibliography\thebibliography
\renewcommand\thebibliography[1]{
  \OLDthebibliography{#1}
  \setlength{\parskip}{0pt}
  \setlength{\itemsep}{0pt plus 0.3ex} }

\numberwithin{equation}{section}

\theoremstyle{plain}
\newtheorem{theorem}{Theorem}[section]

\newtheorem{lemma}[theorem]{Lemma}

\theoremstyle{definition}
\newtheorem{remarkbase}[theorem]{Remark}
\newenvironment{remark}{\pushQED{\qed} \remarkbase}{\popQED\endremarkbase}

\newcommand{\R}{{\mathbb R}}

\renewcommand{\S}{{\mathbb S}}

\newcommand{\mE}{\mathcal{E}}

\newcommand{\mJ}{\mathcal{J}}

\newcommand{\mL}{\mathcal{L}}

\renewcommand{\a}{\alpha}

\newcommand{\ph}{\varphi}
\newcommand{\lm}{\lambda}

\newcommand{\Om}{\Omega}

\newcommand{\s}{\sigma}

\newcommand{\la}{\langle}
\newcommand{\ra}{\rangle}
\newcommand{\pa}{\partial}
\renewcommand{\div}{\operatorname{div}} 
\newcommand{\grad}{\nabla}

\newcommand{\curl}{\operatorname{curl}}

\newcounter{mt}

\def\maindefinitiondeclaration#1{\stepcounter{mt}\newcounter{#1}\setcounter{#1}{\arabic{mt}}}

\maindefinitiondeclaration{loro}
\maindefinitiondeclaration{defn:steady}

\newcommand{\bcb}{\begin{color}{blue}}
\newcommand{\bcr}{\begin{color}{red}}
\newcommand{\bcg}{\begin{color}{green}}
\newcommand{\ec}{\end{color}}

\newcommand{\Hess}{\grad^2} 

\title{A rigidity result for the 3D capillary liquid drop with constant vorticity}
\author{\normalsize{Pietro Baldi,  Domenico Angelo La Manna, Giuseppe La Scala}}
\date{} 

\begin{document}\maketitle 
\abstract{
We consider the free boundary problem for the Euler equations of fluid dynamics 
governing the motion of a 3D liquid drop with capillarity $\sigma_0$ and nearly spherical shape, 
under the assumption of constant vorticity $(0, 0, \a_0)$.
First we study the compatibility of the constant vorticity condition 
with the evolution in time of the system, 
showing that, for $\alpha_0 \neq 0$, 
any smooth solution with convex domain must satisfy 
a strong geometrical constraint on the shape of the fluid domain, 
and that the constant vorticity condition 
(unlike in the irrotational case $\a_0 = 0$) 
does not define an invariant set for the time evolution of the system. 
Then we focus on the time-independent solutions of the problem  
and we prove a new rigidity result: starting without assuming any symmetry condition, 
we show that, if the ratio $\alpha_0^2/\sigma_0$ is not too large, 
then any nearly spherical solution has necessarily cylindrical symmetry, 
and therefore it is the unique axisymmetric solution already known in literature, 
the fluid domain is close, but not equal, to a ball,  
more precisely it is an oblate spheroid, flattened at the poles and bulged at the equator,  
and each fluid particle moves along a horizontal, circular trajectory 
with constant angular velocity. 
To the best of our knowledge, this is the first result 
for the capillary liquid drop with constant vorticity
obtained without assuming cylindrical symmetry.}

\tableofcontents

\section{Introduction}

We consider the free boundary problem for the Euler equations 
of the fluid dynamics 
\begin{equation}\label{eq:system}
\begin{cases}
\pa_t u + \la u , \grad \ra u + \grad p = 0 
& \text{in } \Omega_t,
\\
\div u=0 & \text{in }  \Omega_t,
\\ 
p = \sigma_0 H_{\pa \Omega_t} & \text{on } \partial \Omega_t,
\\
V_t=\la u,\nu_{\pa\Om}\ra & \text{on } \partial \Omega_t,
\end{cases}
\end{equation}
with the additional constraint of constant vorticity 
\begin{equation}\label{eq:constant.curl}
\curl u= \alpha_0 e_3 \quad \text{in } \Om_t.
\end{equation}
Here $\Om_t \subset \R^3$ is a bounded open set,  
and it is the domain occupied by the fluid, 
$u$ is the fluid velocity vector field, 
$p$ is the fluid pressure, 
$\sigma_0>0$ is the capillarity constant, 
$H_{\pa \Omega_t}$ is the mean curvature of the boundary $\pa \Omega_t$ , 
$V_t$ is the normal velocity of the boundary $\pa \Om_t$, 
$\nu_{\pa \Om_t}$ is the outer unit normal to the boundary, 
$\alpha_0 \in \R$ is a vorticity parameter, 
and $e_3 = (0,0,1)$. 
The unknowns of the problem are $\Om_t, u, p$.   

In this paper we first study 
the compatibility of the constant vorticity condition 
with the evolution in time of the system, 
showing that, for $\alpha_0 \neq 0$, any smooth solution with convex domain must satisfy 
a strong geometrical constraint on the shape of the fluid domain $\Om_t$, 
and that condition \eqref{eq:constant.curl} 
(unlike in the irrotational case $\a_0 = 0$) 
does not define an invariant set for the time evolution of \eqref{eq:system}. 
Then we focus on the time-independent solutions of the problem  
and we prove a new rigidity result: starting without assuming any symmetry condition, 
we show that, if the ratio $\alpha_0^2/\sigma_0$ is not too large, 
then the solution has necessarily cylindrical symmetry, 
and therefore it is the unique axisymmetric solution already known in literature, 
the fluid domain is close, but not equal, to a ball, 
more precisely, it is an oblate spheroid, flattened at the poles and bulged at the equator,  
and each fluid particle moves along a horizontal, circular trajectory 
with constant angular velocity. 
To the best of our knowledge, this is the first result 
for the capillary liquid drop with constant vorticity 
obtained without assuming cylindrical symmetry.

In the next two subsections we introduce and state these results.

\subsection{{Compatibility of the constant vorticity condition with the time evolution}}

Before stating our results, we recall the well-known fact that the pressure 
can be expressed in terms of the other unknowns.  
Under suitable regularity assumptions, 
taking the divergence of the first equation in \eqref{eq:system} 
and using the second and third equations, one finds that 
the Laplacian $\Delta p$ of the pressure in $\Om_t$ 
and its trace $p|_{\pa \Om_t}$ at the boundary $ \pa \Om_t$ 
are determined by $\Om_t$ and $u$, 
and therefore $p$ is determined, in terms of $\Om_t,u$, 
as the unique solution of that Poisson problem with Dirichlet boundary conditions. 
For this reason, one can say that the unknowns of the problem are 
just the domain $\Om_t$ and the velocity vector field $u$. 
Existence of a classical solution to the problem \eqref{eq:system} has been proved in \cite{CS} and a continuation principle has been established in \cite{JL}.

The first result we prove is Theorem \ref{thm:Cauchy.pb}, 
which says that a smooth solution of \eqref{eq:system}, \eqref{eq:constant.curl} 
starting from a convex smooth set has a horizontal plane of symmetry, 
the boundary of the fluid domain is given by the union of two graphs, 
the velocity vector field does not depend on $x_3$ 
and its third component is zero. 
To simplify the statement, we recall the well-known fact that the barycenter velocity vector 
$c = \int_{\Om_t} u \, dx \in \R^3$ is a conserved quantity of system \eqref{eq:system}, 
that is, $c$ is independent of time along any solution, 
and therefore the fluid barycenter moves with uniform linear motion $b(t) = b_0 + c t$.
Hence the motion of the liquid drop can be decomposed into 
a trivial, constant velocity drift and a remaining component, 
which corresponds to using a coordinate frame 
where the origin of the axes and the barycenter of the fluid coincide. 
In other words, there is no loss of generality in assuming that the barycenter velocity vector 
is $\int_{\Om_t} u \, dx = 0$.

\begin{theorem} \label{thm:Cauchy.pb} 
Let $\Omega_0$ be a strictly convex set with $C^5$ boundary.
Let $u_0\in H^5(\Omega_0, \R^3)$ be a vector field with $\curl u_0 = \alpha_0 e_3$, 
with vorticity parameter $\a_0 \neq 0$,  
and barycenter velocity vector $\int_{\Om_0} u_0 \, dx = 0$. 
Let $(u, \Om_t)$ be a classical solution of the Cauchy problem for system \eqref{eq:system} 
in a time interval $(-\delta,\delta)$, $\delta > 0$, 
with initial datum $(u_0, \Om_0)$ at time $t=0$. 
If $u$ satisfies \eqref{eq:constant.curl} for $t \in (-\delta, \delta)$,
then there exists $\lm_0 \in \R$ such that the domain $\Omega_0$ is symmetric 
with respect to the plane $\{x_3=\lambda_0\}$, 
the surface $\pa \Omega_0 \cap \{x_3 > \lambda_0 \}$ is a graph over the plane domain 
$\Om_0 \cap \{ x_3 = \lm_0 \}$, 
the vector field $u_0$ is independent of the variable $x_3$, 
and it has third component $u_{0,3} = 0$.   
\end{theorem}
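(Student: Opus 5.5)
The plan is to first extract the consequences of the constant vorticity constraint for the velocity, using the vorticity equation, and only then turn to the geometry of $\Om_0$, using the boundary condition for the pressure.

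\textbf{Step 1: the velocity.} Taking the curl of the first equation in \eqref{eq:system} gives the vorticity equation $\pa_t \omega + \la u, \grad \ra \omega - \la \omega, \grad \ra u = 0$, with $\omega = \curl u$. If \eqref{eq:constant.curl} holds for all $t \in (-\delta,\delta)$, then $\omega = \a_0 e_3$ is constant, so the first two terms vanish and we are left with $\a_0 \pa_3 u = 0$. Since $\a_0 \neq 0$, $u$ is independent of $x_3$.

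Plugging this into $\curl u = \a_0 e_3$, the first two components give $\pa_2 u_3 = 0$ and $\pa_1 u_3 = 0$. Hence $u_3$ is spatially constant on $\Om_t$, which is connected by convexity. Then
\[
\int_{\Om_t} u_3 \, dx = u_3 |\Om_t|,
\]
and this vanishes because the barycenter velocity is conserved and equal to $0$ at $t=0$. Therefore $u_3 \equiv 0$. Evaluating at $t=0$ gives the claims on $u_0$.

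\textbf{Step 2: the pressure.} From the Euler equation, $\grad p = -\pa_t u - \la u, \grad \ra u$. The right-hand side is independent of $x_3$ and has zero third component. So $\pa_3 p = 0$, and by convexity (vertical segments stay inside $\Om_0$) we get $p = P(x_1,x_2)$ on $\overline{\Om_0}$. We write $x' = (x_1,x_2)$ from now on.

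\textbf{Step 3: the geometry.} Let $D$ be the projection of the strictly convex set $\Om_0$ onto the horizontal plane. Then $\pa\Om_0$ is the union of an upper graph $x_3 = f(x')$ and a lower graph $x_3 = g(x')$, with $f > g$ in $D$, $f = g$ on $\pa D$, and $|\grad f|, |\grad g| \to \infty$ at $\pa D$ (vertical tangent along the rim). Writing $T v = \grad v / \sqrt{1+|\grad v|^2}$, the boundary condition $p = \s_0 H$ becomes
\[
-\div(T f) = \frac{P}{\s_0} = \div(T g) \quad \text{in } D .
\]
Equivalently, $f$ and $-g$ solve the same prescribed mean curvature equation $\div(Tv) = -P/\s_0$ in $D$. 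The goal is to show that $f + g \equiv 2\lm_0$ is constant. Then the reflection $x_3 \mapsto 2\lm_0 - x_3$ swaps the two graphs, which gives the symmetry plane $\{x_3 = \lm_0\}$. It also shows that the rim $\{f = g\}$ lies in that plane, so $\pa\Om_0 \cap \{x_3 > \lm_0\}$ is the graph of $f$ over $\Om_0 \cap \{x_3 = \lm_0\}$.

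To prove $f+g$ constant I would use a comparison argument. The difference $w = f - (2\lm - g)$ satisfies a linear, locally uniformly elliptic equation obtained from the mean value theorem applied to $\div(T\cdot)$. So in the interior of $D$ the strong maximum principle and the Hopf lemma are available. I would run an Alexandrov-type sliding argument:
\begin{itemize}
\item reflect the lower graph across $\{x_3 = \lm\}$;
\item start with $\lm$ large;
\item decrease $\lm$ until the reflected surface first touches the upper graph.
\end{itemize}
An interior touching point forces $w \equiv 0$ by the strong maximum principle. The reflected surface and the upper graph have the same mean curvature at points with the same $x'$, so the usual argument applies.

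\textbf{Main obstacle.} The difficulty is touching at the rim $\pa D$, where the graph equation degenerates. To handle it, near a rim point I would rewrite both surfaces locally as graphs over the vertical tangent plane. In those coordinates the mean curvature equation is uniformly elliptic, and the key structural fact is that the prescribed curvature $P(x')/\s_0$ is the same function of the horizontal variables on both surfaces. Then a Hopf lemma, or a Serrin corner lemma, applies there. If the boundary analysis proves too delicate, a fallback is to exploit the time evolution: the kinematic condition with $u_3 = 0$ and $u$ independent of $x_3$ gives $\pa_t f + u' \cdot \grad f = 0$, and the same for $g$. So $f + g$ is transported by a horizontal flow, which may give additional rigidity on the rim.
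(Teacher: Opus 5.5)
Your proposal is correct. Steps 1--2 are exactly the paper's, but Step 3 takes a genuinely different route.

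\textbf{How the two routes differ.} The paper runs no reflection argument of its own. From $\pa_3 p=0$ it extracts only that $H_{\pa\Om_0}$ takes equal values at the two boundary points on each vertical line. It then quotes the Li--Yan--Yao theorem (Theorem~\ref{thm:Li}), using convexity just to check that theorem's hypotheses and to read off the graph property. You instead prove the symmetry by hand, and you use strictly more than Theorem~\ref{thm:Li} assumes: $\s_0 H_{\pa\Om_0}=P(x')$, where $P$ is the restriction of the pressure, so $\grad_{x'}P=\grad_{x'}p$ is bounded up to $\pa D$.

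This is precisely what makes the rim tractable. Over the vertical tangent plane, corresponding points of $\pa\Om_0$ and of its reflection are aligned horizontally rather than vertically. The difference of the two local graphs then satisfies a linear uniformly elliptic equation whose zeroth-order coefficient, coming from $\pa_{x_1}P$, is bounded. The bare hypothesis of Theorem~\ref{thm:Li} (equal curvatures along vertical segments) gives no such control, which is why that theorem is much harder.

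\textbf{How to close your rim case.} If $f+g$ is not constant, the interior strong maximum principle forces $\lm^*=\max_{\overline D}(f+g)/2$ to be attained at some $x_0'\in\pa D$.
\begin{itemize}
\item The inequality $f+g\le 2\lm^*$ gives that $\Om_0\cap\{x_3>\lm^*\}$ is contained in the reflection of $\Om_0$ across $\{x_3=\lm^*\}$.
\item After a horizontal rotation, write $\Om_0=\{x_1<\psi(x_2,x_3)\}$ near $(x_0',\lm^*)$. Then $w=\psi(x_2,x_3)-\psi(x_2,2\lm^*-x_3)$ is $\le 0$ on a half-disc in $\{x_3>\lm^*\}$ of the $(x_2,x_3)$-plane, and vanishes on its flat side.
\item At the contact point, $\pa_{x_3}w=2\pa_{x_3}\psi=0$, because the normal there is horizontal.
\end{itemize}
The contact point is a smooth boundary point of the half-disc, so the ordinary Hopf lemma applies; it is valid for any sign of the zeroth-order term because $\max w=0$. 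No Serrin corner lemma is needed. Hopf forces $w\equiv 0$, hence $f+g=2\lm^*$ on an open set of $D$ near $x_0'$. The interior strong maximum principle then spreads this to all of $D$. Your transport fallback is therefore unnecessary.

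(Minor point: for $t\neq 0$, the connectedness of $\Om_t$ comes from the flow map, not from convexity.)

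\textbf{What each route buys.} Your argument is self-contained and elementary: maximum principle plus Hopf lemma. The paper's is shorter and rests on a purely geometric hypothesis that needs no bulk regularity of $p$, and applies under geometric assumptions weaker than convexity.
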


The proof of Theorem \ref{thm:Cauchy.pb} is in Section \ref{sec:symmetry}, and it 
is based on the Alexandrov reflection principle. 
From the equation of the time evolution of the vorticity 
and the constant vorticity condition \eqref{eq:constant.curl} 
one immediately deduces that 
\begin{equation} \label{eq:vort.equation.in.the.intro}
0 = \la \curl u , \grad \ra u = \alpha_0 \pa_{x_3} u. 
\end{equation}
Since $\a_0 \neq 0$, identity \eqref{eq:vort.equation.in.the.intro} 
implies that the velocity vector field $u$ is independent of the variable $x_3$, 
and this property of $u$ is used to obtain a symmetry property 
for the mean curvature $H_{\pa \Om_t}$ of the boundary $\pa \Om_t$. 
Then the symmetry of the curvature, by the Alexandrov reflection principle, 
leads to the symmetry of the domain $\Om_t$ itself.

\begin{remark}[\emph{Irrotational solutions have more freedom concerning the geometry of the domain $\Om_t$}]
Irrotational solutions of \eqref{eq:system} 
are solutions satisfying the constant vorticity condition \eqref{eq:constant.curl} 
with parameter $\a_0 = 0$. 
However, for $\a_0 = 0$, identity \eqref{eq:vort.equation.in.the.intro} becomes $0=0$, 
and the argument of Theorem \ref{thm:Cauchy.pb} does not apply. 
Thus, the geometric constraint on the shape of $\Om_t$ 
does not hold for irrotational solutions.  
\end{remark}

\begin{remark}[\emph{Strict convexity is not strictly necessary}] 
\label{rem:convex.hyp.in.the.intro}
The assumption of strict convexity of $\Omega_0$ is not really needed 
in the proof of Theorem \ref{thm:Cauchy.pb}, and it can be replaced by weaker geometric conditions, 
see Remark \ref{rem:convex.hyp}.
\end{remark}

\begin{remark}[\emph{Local existence for the Cauchy problem}] 
\label{rem:noia}
The existence of a classical solution $(\Om_t, u)$ of the Cauchy problem for system \eqref{eq:system}
is proved by Coutand and Shkoller, see \cite[Theorem 1.1]{CS}
for a statement in terms of $\Om_t,u$, and \cite[Theorem 1.3]{CS} 
for its Lagrangian formulation in spaces of functions defined 
on the fixed domain $[0,T] \times \Om_0$.  
The regularity assumed in Theorem \ref{thm:Cauchy.pb} 
is sufficient to apply the existence results in \cite{CS}. 
We observe that, in the proof of Theorem \ref{thm:Cauchy.pb}, 
the regularity in time of the solution 
is only used to obtain the vorticity equation $\pa_t w + \la u , \grad \ra w - \la w, \grad \ra u = 0$ 
for $w = \curl u$ at time $t=0$ 
and the identity $\pa_t (\curl u) = \curl (\pa_t u)$ at $t=0$.  
\end{remark}

A consequence of Theorem \ref{thm:Cauchy.pb} is that the only admissible initial data 
of the Cauchy problem for \eqref{eq:system} with the condition \eqref{eq:constant.curl} 
are pairs $(\Om_0, u_0)$ where 
the domain $\Om_0$ is symmetric with respect to the plane $\{ x_3 = \lm_0 \}$, for some $\lm_0 \in \R$, 
and the velocity vector field $u_0$ is independent of $x_3$. 
This is a geometric constraint that, in many cases, is not compatible with the time evolution equation, 
namely the set of pairs $(\Om_0,u_0)$ of this kind 
is not an invariant set for the dynamics of the capillary drop,  
as we show, with an explicit example, in the next theorem. 

\begin{theorem} \label{thm:not.invariant.subset}  
Let $\Omega_0 = B_1 \subset \R^3$ be the open unit ball, 
and let $u_0(x) := \frac{\a_0}{2} (-x_2, x_1, 0)$.
Let $(\Omega_t, u)$ be the unique smooth solution of the Cauchy problem for \eqref{eq:system} 
in some time interval $(-\delta, \delta)$, $\delta>0$, 
with initial data $(\Om_0, u_0)$ at time $t=0$. 
Then $\curl u$ is not constant in $(-\delta, \delta)$. 
\end{theorem}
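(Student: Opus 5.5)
We argue by contradiction: assume $\curl u = \a_0 e_3$ for all $t \in (-\delta,\delta)$, and derive a contradiction at $t=0$ by looking at the time derivatives of the solution. As recalled in the introduction, the vorticity equation gives $\a_0 \pa_{x_3} u = 0$, hence $\pa_{x_3} u = 0$ for every $t$, in particular $\pa_{x_3}\pa_t u = 0$ at $t=0$. (If $\a_0=0$ the datum is $u_0=0$, $\Om_0=B_1$, a stationary solution whose vorticity is trivially constant; so we take $\a_0\neq 0$, as in Theorem \ref{thm:Cauchy.pb}.) The strategy is to compute $\pa_t u|_{t=0}$ explicitly from the Euler equation and show that it depends on $x_3$.

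First, $\pa_t u = -\la u,\grad\ra u - \grad p$ at $t=0$. For the rigid rotation $u_0 = \frac{\a_0}{2}(-x_2,x_1,0)$ we have $\la u_0,\grad\ra u_0 = -\frac{\a_0^2}{4}(x_1,x_2,0)$. This is a gradient, namely $-\grad\bigl(\frac{\a_0^2}{8}(x_1^2+x_2^2)\bigr)$. The pressure $p_0$ at $t=0$ solves the Dirichlet problem
\begin{equation*}
\Delta p_0 = -\operatorname{tr}\bigl((\grad u_0)^2\bigr) = \tfrac{\a_0^2}{2} \ \text{in } B_1, \qquad p_0 = \sigma_0 H_{\pa B_1} = 2\sigma_0 \ \text{on } \pa B_1
\end{equation*}
(with the convention that the unit sphere has mean curvature $2$; the constant is irrelevant). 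The unique solution is $p_0 = 2\sigma_0 + \frac{\a_0^2}{12}(|x|^2-1)$. Therefore
\begin{equation*}
\pa_t u|_{t=0} = \grad\Bigl(\tfrac{\a_0^2}{8}(x_1^2+x_2^2) - \tfrac{\a_0^2}{12}|x|^2\Bigr) = \a_0^2\Bigl(\tfrac{1}{12}x_1, \tfrac1{12}x_2, -\tfrac16 x_3\Bigr).
\end{equation*}
Its third component has $\pa_{x_3}$-derivative $-\a_0^2/6 \neq 0$, which contradicts $\pa_{x_3}\pa_t u = 0$.

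The identity $\pa_{x_3}\pa_t u = \pa_t \pa_{x_3} u$ at $t=0$ needs the regularity of the smooth (Coutand--Shkoller) solution near $t=0$, as noted in Remark \ref{rem:noia}; this is the only technical point. Interior points of $B_1$ remain in $\Om_t$ for small $|t|$, so the computation can be done pointwise in the interior. The conceptual reason for the contradiction is that the ball is not a stationary shape under rotation: the pressure correction pushes fluid outward horizontally and inward vertically (the ball starts to flatten toward an oblate shape), which breaks the $x_3$-independence. An alternative, equally short, route to the same conclusion is via Theorem \ref{thm:Cauchy.pb}. If the curl stayed constant, then for each small $t$ the field $u(t)$ would be $x_3$-independent with $u_3=0$. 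But $\pa_t u_3 = -\frac{\a_0^2}{6}x_3 \neq 0$ at $t=0$, so $u_3(t)\neq 0$ for small $t\neq0$, a contradiction. I expect the explicit pressure computation to be the main step, and it is elementary.
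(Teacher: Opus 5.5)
Your proposal is correct and follows essentially the paper's route. The paper also solves $\Delta p=\a_0^2/2$ in $B_1$, $p=2\s_0$ on $\pa B_1$, obtains $p(0,x)=\frac{\a_0^2}{12}(|x|^2-1)+2\s_0$, and contradicts constant vorticity via $\pa_3 p(0,x)=\frac{\a_0^2}{6}x_3\neq 0$ against \eqref{eq:pressure.equation}; that relation relies on $u_3\equiv 0$, so it is your alternative route, whereas your main route uses $\pa_{x_3}\pa_t u=0$ directly and avoids the barycenter step (your observation that $\a_0\neq 0$ is needed is also right, and the paper uses it only implicitly).
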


Theorem \ref{thm:not.invariant.subset} gives an example of an initial datum $(\Om_0, u_0)$ 
where \eqref{eq:constant.curl} is satisfied, 
$\Om_0$ is symmetric with respect to the plane $\{ x_3 = 0 \}$, 
and $u_0$ depends only on $x_1, x_2$, 
and, nonetheless, the solution $(\Om_t, u)$ starting from $(\Om_0, u_0)$ 
has nonconstant vorticity in $(-\delta, \delta)$. 
In other words, we have proved that the Cauchy problem for system \eqref{eq:system} 
joint with the additional equation \eqref{eq:constant.curl} is, in general, illposed. 
The proof of Theorem \ref{thm:not.invariant.subset} is in Section \ref{sec:symmetry}.

\begin{remark}[\emph{Illposedness is a 3-dimensional phenomenon}] \label{rem:2D.3D} 
The illposedness of system \eqref{eq:system}, \eqref{eq:constant.curl} 
shown in Theorem \ref{thm:not.invariant.subset} is a genuinely 3-dimensional phenomenon:
this is due to the fact that the vorticity equation in two dimensions is 
$\pa_t w + \la u , \grad \ra w =0$
and, therefore, the constant vorticity property is preserved during the evolution.  
In fact, in dimension 2, the solution of the Cauchy problem with initial data $(\Om_0, u_0)$, 
where $\Om_0$ is the unit ball of $\R^2$ (the unit disc) and $u_0 = \frac{\a_0}{2} (-x_2, x_1)$, 
is the time-independent solution $(\Om_t,u(t)) = (\Om_0, u_0)$. 
\end{remark}

\subsection{{Rigidity result for time-independent solutions}}


The aim of the second, and main, part of the paper 
is to investigate solutions $\Om_t = \Om$, $u$ of problem \eqref{eq:system}-\eqref{eq:constant.curl} 
with 
\begin{equation} \label{time.indep.sol}
\Om, u = \text{independent of time}.
\end{equation}


In the irrotational case, that is, for $\alpha_0=0$, 
the only solutions $(\Om, u)$ with $\Omega$ of class $C^2$, time-independent and simply connected
(with $u$, in principle, possibly depending on time) 
are solutions where 
$u = 0$ and $\Om$ is a ball. 
The argument is rather simple, and we recall it here.
Assume that $(\Om,u)$ solve \eqref{eq:system}, with $\curl u = 0$ 
and $\Om$ time-independent, $C^2$ and simply connected.
Since the boundary $\pa \Om$ does not change in time, 
its normal velocity $V_t$ is zero, and, from the fourth equation in \eqref{eq:system}, 
the vector $u$ has zero normal component at the boundary $\pa \Om$. 
Hence, for any scalar function $f$, one has 
$\int_{\pa \Om} f \la u , \nu_{\pa \Om} \ra \, d\sigma = 0$, 
and, by the second equation in \eqref{eq:system} and the divergence theorem, 
$\int_\Om \la \grad f, u \ra \, dx = \int_\Om \div(fu) \, dx = 0$. 
In other words, the vector field $u$ is orthogonal in $L^2(\Om, \R^3)$ 
to any gradient vector field. 
On the other hand, since $\curl u$ is zero and $\Om$ is simply connected, 
there exists a scalar function $\Phi$, called the velocity potential, 
such that $u = \grad \Phi$. Taking $f = \Phi$ in the orthogonality property above, 
we get $0 = \int_\Om \la \grad f, u \ra \, dx = \int_\Om |\grad \Phi|^2 \, dx$, 
that is, $u = \grad \Phi = 0$. 
Since $u=0$, the first equation in \eqref{eq:system} gives $\grad p = 0$. 
Hence the pressure has the same value (possibly depending on $t$) at all points of $\Om$. 
On the other hand, $\sigma_0 H_{\pa \Om}$ is independent of time, 
and, by the third equation in \eqref{eq:system}, $p|_{\pa \Om}$ 
is independent of time too. Hence $p$ is a constant. 
The third equation in \eqref{eq:system} also says that the mean curvature of $\pa \Om$ 
is constant, and this, by Alexandrov's Theorem, implies that $\Om$ is a ball.  

When $\alpha_0\not =0$ the problem becomes more challenging, 
and the simple argument above fails. 
As we have seen above, an important difference is that 
being irrotational is a condition that is preserved along the motion 
and that gives no special geometric constraint on the shape of the fluid domain,  
while, on the contrary, the condition of nonzero constant vorticity 
implies strong geometric constraints on the shape of the domain $\Om_t$ 
and on the velocity vector field $u$ (Theorem \ref{thm:Cauchy.pb}) 
and, in general, it is not preserved along the motion  
and can lead to illposed Cauchy problems (Theorem \ref{thm:not.invariant.subset}).
These constraints play a role also in the study of time-independent solutions. 

For time-independent $(\Om,u)$, problem \eqref{eq:system}-\eqref{eq:constant.curl} becomes 
\begin{equation} \label{steady.rotating.solution}
\begin{cases}
\la u, \nabla \ra  u=-\nabla p   & \text{in } \Omega,
\\
\div u=0 & \text{in } \Omega,
\\
\curl u= \alpha_0 e_3 & \text{in } \Omega,
\\
p=\sigma_0 H_{\pa \Omega} &\text{on } \pa \Omega,
\\
\la u ,\nu_{\pa\Om}\ra =0  &\text{on } \pa \Omega.
\end{cases}
\end{equation}
The main result of the paper is the following rigidity theorem.

\begin{theorem}\label{thm:global}
Let $\Omega$ be a strictly convex open set with $C^2$ boundary 
and $u\in C^2(\Omega,\R^3) \cap C(\overline{\Om}, \R^3)$. 
Suppose that $(\Omega,u)$ is a solution of \eqref{time.indep.sol}, \eqref{steady.rotating.solution}.
Then 
\begin{itemize}

\item[$(i)$]
There exists a real constant $\lambda_0$, a strictly convex open set $D\subset \R^2$ 
and a real function $f \in C^2(D) \cap C(\overline{D})$, 
with $f>0$ in $D$ and $f=0$ on $\pa D$, such that
\[
\Omega = \{(x', x_3) \in \R^3 : x' \in D, \ |x_3-\lambda_0| < f(x') \}.
\]
Moreover the vector field $u$ is independent of $x_3$, 
and it has third component $u_3 = 0$. 

\item[$(ii)$]
If 
\begin{equation}\label{eq:sigma.alpha}
\frac{\alpha_0^2}{\sigma_0} |D|^{\frac32} \leq \sqrt{2\pi^3}, 
\end{equation}
then $D$ is a disk, 
$\pa\Om$ is a surface of revolution, 
and the velocity vector field is $u = \frac{\a_0}{2} (-x_2, x_1, 0)$. 
\end{itemize}
\end{theorem}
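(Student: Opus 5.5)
Part $(i)$ follows the argument of Theorem \ref{thm:Cauchy.pb}, now with the time derivative absent. For a steady solution the vorticity equation reads $\la u,\grad\ra w - \la w,\grad\ra u = 0$ with $w = \curl u = \a_0 e_3$ constant. Hence $\a_0 \pa_{x_3} u = 0$, and $u$ is independent of $x_3$ (I assume $\a_0\neq 0$; otherwise the irrotational argument of the introduction gives $u=0$ and a ball).

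The first two components of $\curl u = \a_0 e_3$ then give $\pa_1 u_3 = \pa_2 u_3 = 0$, so $u_3$ is constant. At the highest point of the strictly convex set $\Om$ the normal is $e_3$, and there $\la u,\nu\ra = u_3 = 0$.

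Since $\grad p = -\la u,\grad\ra u$ does not depend on $x_3$ and has zero third component, $p = p(x')$. Therefore $H_{\pa\Om}(x',x_3) = H_{\pa\Om}(x',\tilde x_3)$ whenever both points lie on $\pa\Om$. Alexandrov's moving plane method in the $e_3$ direction, exactly as in Section \ref{sec:symmetry}, gives a plane $\{x_3=\lm_0\}$ of symmetry. Strict convexity gives the two-graph representation with $D$ the projection of $\Om$ and $f>0$ in $D$, $f=0$ on $\pa D$.

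For part $(ii)$ I reduce to a planar problem. Write $u' = (u_1,u_2) = \grad^\perp\psi$ on $D$, so that $\Delta\psi = \a_0$. On the upper graph, $\la u,\nu\ra=0$ becomes $\la u',\grad f\ra = 0$, so $f$ is constant along streamlines. At the equator $\pa D\times\{\lm_0\}$ the normal is horizontal, so $u'$ is tangent to $\pa D$ and $\psi$ is constant on $\pa D$; normalise $\psi = 0$ there. Since $\Delta\psi=\a_0$ has one sign, $\psi$ is of one sign in $D$ by the maximum principle.

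The planar Bernoulli law for constant vorticity, $p + \tfrac12|\grad\psi|^2 - \a_0\psi = \text{const}$, turns the dynamic boundary condition into
\begin{equation*}
\s_0\, \div\Big(\frac{\grad f}{\sqrt{1+|\grad f|^2}}\Big) = -\s_0 H = -c + \tfrac12|\grad\psi|^2 - \a_0 \psi \quad\text{in } D ,
\end{equation*}
where $H = H_{\pa\Om}$ is computed with respect to the outer normal, so that $H>0$ on a strictly convex surface. Thus we have an overdetermined pair: $\Delta\psi=\a_0$ with $\psi=0$ on $\pa D$, coupled to a prescribed mean curvature equation for $f$ whose level sets coincide with those of $\psi$. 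The aim is Serrin's conclusion, namely $|\grad\psi|$ constant on $\pa D$, which forces $D$ to be a disk and $\psi = \frac{\a_0}{4}(|x'|^2-R^2)$, i.e. $u = \frac{\a_0}{2}(-x_2,x_1,0)$. The surface of revolution then follows because $f=F(\psi)$ is radial.

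To get there I will use Weinberger's $P$-function $P = |\grad\psi|^2 - \a_0\psi$, which is subharmonic. I would try to show that $P$ is constant by combining two facts:
\begin{itemize}
\item on $\pa D$, where $\psi = 0$ and $P = |\grad\psi|^2$, the capillary equation at the equator links $|\grad\psi|^2$ to the curvature of $\pa D$ plus the vertical curvature of the profile;
\item integral identities (Pohozaev and Rellich for $\psi$, the divergence theorem for the mean curvature operator) relate $\int_D P$ to boundary integrals.
\end{itemize}

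The smallness condition \eqref{eq:sigma.alpha} should enter as a sign condition that closes this comparison. Talenti's comparison gives $\sup|\psi| \le \frac{|\a_0||D|}{4\pi}$, and an estimate of the form $\int_{\pa D}|\grad\psi|^2 \lesssim \a_0^2|D|^{3/2}$ follows from Pohozaev. Gauss--Bonnet, $\int_{\pa D}\kappa = 2\pi$, and the isoperimetric inequality then bound the capillary side from below. When $\a_0^2|D|^{3/2}/\s_0 \le \sqrt{2\pi^3}$, the kinetic term cannot dominate, and the resulting integral inequality is forced to be an equality, which is the equality case of Weinberger's argument.

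I expect this last step to be the main obstacle: finding exactly which weighted integral identity produces the constant $\sqrt{2\pi^3}$, and controlling the boundary term involving the vertical curvature of the profile $f$ at $\pa D$, where $|\grad f|\to\infty$. My first attempt would be to parametrise by the level sets of $\psi$ and integrate the mean curvature equation over the sublevel regions $\{\psi<t\}$, then apply the isoperimetric inequality on each level set.
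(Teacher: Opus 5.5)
\textbf{Part $(i)$.} Your argument is fine. Deriving $u_3=0$ from $\la u,\nu\ra=0$ at the top point of $\Om$ is a valid substitute for the zero-barycenter-velocity argument of Theorem \ref{thm:Cauchy.pb}, and arguably a more appropriate one, since that assumption is not a hypothesis of Theorem \ref{thm:global}.

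\textbf{Part $(ii)$.} This is not a proof. The step you flag as the ``main obstacle'' is essentially the whole content of the theorem, and the route you sketch does not close. Weinberger's $P$-function argument needs $|\grad\psi|$ to be constant on $\pa D$: that is what gives $P\le\max_{\pa D}P$ and lets the Pohozaev identity force equality. Here the Neumann data vary along $\pa D$, both with the curvature of $\pa D$ and with the unknown vertical curvature of the profile. So the maximum principle and the Rellich--Pohozaev identity only give non-sharp inequalities. (Note also that Pohozaev controls $\int_{\pa D}|\grad\psi|^2\la x,\nu\ra$, not $\int_{\pa D}|\grad\psi|^2$.) No equality case can be extracted from bounds such as Talenti's estimate of $\sup|\psi|$.

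Three ideas are missing.

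\emph{First: the overdetermined boundary relation.} You need the level sets of $\psi=-\a_0 v_D$ to be connected curves; this uses convexity of $D$ and the $\tfrac12$-concavity of $v_D$. Then $f=\tilde f(v_D)$, and at the equator the vertical curvature is not an arbitrary function on $\pa D$. It equals $-\beta/|\grad v_D|$ for a \emph{single} constant $\beta=\lim_{s\to0^+}\tilde f''(s)/\tilde f'(s)^3$. This turns the capillary condition into the genuinely overdetermined relation $\frac{\a_0^2}{2}|\grad v_D|^2-\frac{\s_0\beta}{|\grad v_D|}+\s_0H_{\pa D}=c$ on $\pa D$. For the ball, $\beta=-\tfrac12$.

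\emph{Second: a quantitative bound on $\beta$.} Integrating the Bernoulli equation over $D$ computes $c$, because the flux of the mean-curvature field tends to $-P(D)$. Integrating the boundary relation against $|\grad v_D|$ then expresses $\s_0\beta$ through two quantities:
\begin{itemize}
\item $\int_{\pa D}H_{\pa D}|\grad v_D|-P(D)$, which is $\le -P(D)/2$ by Reilly's formula, Gauss--Bonnet and the isoperimetric inequality;
\item $\int_{\pa D}|\grad v_D|^3-3\int_D|\grad v_D|^2$, which is $\le 0$ because $\tfrac12$-concavity yields $\int_D v_D\det\Hess v_D\ge0$.
\end{itemize}
Together these give $\beta\le-\tfrac12$.

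\emph{Third: the smallness condition is used pointwise, not in an integral identity.} Combine $|\beta|\ge\tfrac12$ with $\|\grad v_D\|_\infty<\sqrt{|D|/2\pi}$ for convex $D$. Then \eqref{eq:sigma.alpha} places $|\grad v_D|$ in the range where $t\mapsto\frac{\a_0^2}{2}t^2+\s_0|\beta|/t$ is decreasing. This is exactly where $\sqrt{2\pi^3}$ comes from, since $\eqref{eq:sigma.alpha}$ is equivalent to $(|D|/2\pi)^{3/2}\le\s_0/(2\a_0^2)$. Consequently $\pa_\nu v_D$ is a nonincreasing function of $H_{\pa D}$. After bootstrapping $\pa D$ to $C^3$, Serrin's curvature-dependent version of his theorem gives that $D$ is a disk directly. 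You never need to prove by hand that $|\grad\psi|$ is constant on $\pa D$, which is the harder target your plan aims at.
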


\begin{remark}[\emph{Strict convexity is not really necessary}] 
As we highlight in Remark \ref{rmk:final} below, 
the assumption of strict convexity is actually not necessary to prove Theorem \ref{thm:global}, 
and the convexity of $\Omega$ would be sufficient. 
Nonetheless, we give the proof in details assuming $\Omega$ strictly convex
to simplify the proofs and the notations, 
and we  briefly outline the steps that lead to prove Theorem \ref{thm:global} when $\Omega$ is just convex. 
\end{remark}

\begin{remark}[\emph{Regularity assumption}] 
The regularity for $(\Om,u)$ required in Theorem \ref{thm:global} is only due to geometric arguments,
and it is weaker than that in Theorem \ref{thm:Cauchy.pb}, where $(\Om,u)$ is the initial datum 
of a Cauchy problem. 
\end{remark}

\begin{remark}[\emph{Formula of the surface $\pa \Om$}] 
\label{rem:f.Lopez}
In the second item of Theorem \ref{thm:global},  
the function $f$ giving the profile of the surface $\pa \Om$ 
can be determined from an explicit identity, see \eqref{eq:ODE}, 
leading to the axisymmetric shape of the capillary drop already obtained in \cite{Lo}. 
Concerning the connection between Theorem \ref{thm:global} and \cite{Lo}, 
see also the discussion below. 
\end{remark}

Theorem \ref{thm:global} is related to the results of \cite{Lo,W}, 
where the calculus of variations is used to study the shape of rotating fluids.  
To discuss this connection, a clarification about the use of the expression 
``stationary solution'' is in order: 

\begin{itemize}
\item 
\underline{in \cite{Lo,W}}, an open set $\Omega \subset \R^3$ of class $C^2$ is said to be 
a `stationary rotating solution' if it is a critical point of the shape functional
\begin{equation}\label{eq:Lo.W.}
\mathcal L(\Omega)= \sigma_0 P(\Omega) - \gamma_0 \int_{\Omega} |x'|^2\, dx,
\end{equation}
where $P(\Omega)$ is the perimeter of $\Omega$, 
among all sets of finite perimeter with fixed volume, with $\gamma_0 \in \R$;

\item
\underline{in the present paper}, 
a pair $(\Omega,u)$ is said to be a `steady solution', or `stationary solution', if 
$\Omega$ is an open bounded set with $C^2$ boundary, $u\in C^2(\Omega)$, 
and $(\Om,u)$ satisfies \eqref{time.indep.sol}, \eqref{steady.rotating.solution}.
\end{itemize}

%

In \eqref{eq:Lo.W.} we used the notation $x=(x',x_3)\in \R^2\times \R$.
We emphasize that, 
with respect to the full problems studied in the papers \cite{Lo, W}, 
in \eqref{eq:Lo.W.} we are making a big simplification: 
in \cite{W} the author also considers the possibility of self-gravitation, 
while in \cite{Lo} rotating stationary surfaces 
(i.e., not necessarily boundaries of smooth sets) are studied.
We note that the definition in \cite{Lo,W} 
only deals with the fluid domain $\Om$,   
as the velocity vector field $u$ does not appear in it, 
and that the adjective ``stationary'' in that definition 
reminds the expression ``stationary point of a functional''. 
On the other hand, the meaning of ``stationary'' 
used in the present paper is rather standard in fluid dynamics,   
and includes both the fluid domain $\Om$ and the velocity vector field $u$. 
We also recall that, in the fluid dynamics literature, the adjectives `steady' and `stationary' 
are often used synonymously, e.g., in the book \cite{AK} of Arnold and Khesin.

%

Now let us comment on the differences between these two definitions 
and the connection between the results. 
Computing the first variation of $\mathcal L(\Omega)$, 
one finds that a stationary rotating solution $\Omega$ in the sense of \cite{Lo,W}, 
that is, a critical point of the functional $\mL$, satisfies the Euler-Lagrange equation
\begin{equation}\label{eq:Lop}
\sigma_0 H_{\pa \Omega} - \gamma_0 |x'|^2 = \lambda  \quad \text{on } \pa \Omega,
\end{equation}
where $\lambda$ is a Lagrange multiplier that appears because of the volume constraint.
On the other hand, we start with the free boundary problem 
for the Euler equations of fluid dynamics with constant vorticity, 
and we prove (see identities \eqref{eq:stream}, \eqref{u.decomposition} 
and Lemma \ref{lem:craig.sulem} below) 
that, if $(\Om,u)$ solves \eqref{time.indep.sol}-\eqref{steady.rotating.solution}, then 
there exists a function $g$, independent of $x_3$, harmonic in $\Om$, 
and a constant $c$ such that 
\begin{equation}\label{eq:nostra}
\frac12 \Big| \nabla g - \frac{\alpha_0}{2} (x', 0) \Big|^2 
+ \alpha_0 \Big(g-\frac{\alpha_0}{4}|x'|^2\Big)  
+ \sigma_0 H_{\pa \Om} = c \quad  \text{on }  \pa \Omega.
\end{equation}

The main difference between \eqref{eq:Lop} and \eqref{eq:nostra} 
is the presence of the function $g$.  
In fact, our problem contains both geometry (the shape of the fluid domain $\Omega$) 
and dynamics (the velocity vector field $u$, which in \eqref{eq:nostra} is expressed in terms of $g$), 
while problem \eqref{eq:Lop} considers only the geometry part. 
If $g$ is a constant, then \eqref{eq:nostra} becomes \eqref{eq:Lop} 
with $\gamma_0 = \a_0^2/8$ and $\lm = c - \alpha_0 g$. 
In this paper we \emph{prove} that, when \eqref{eq:sigma.alpha} holds, 
$D$ is a disk, and from this we \emph{deduce} that $g$ is a constant. 
Thus, Theorem \ref{thm:global} states that if the equatorial planar region $D$ is convex 
and the ratio $\a_0^2 / \sigma_0$ satisfies \eqref{eq:sigma.alpha}, 
then the domain $\Om$ of the solution $(\Om, u)$ 
of \eqref{time.indep.sol}, \eqref{steady.rotating.solution}
and the critical point of the shape functional \eqref{eq:Lo.W.} coincide.

\medskip

\underline{{\it Sketch of the proof.}}
While it is clear that the problem of finding critical points $\Om$ of the shape functional \eqref{eq:Lo.W.} 
is a problem of calculus of variations,
it is less evident that studying solutions $(\Om,u)$ 
of \eqref{time.indep.sol}, \eqref{steady.rotating.solution}
is also a problem of calculus of variations. 
The first step towards the proof of Theorem \ref{thm:global} 
is to show that the set $\Omega$ must have a horizontal plane of symmetry, 
while $u$ must be independent of $x_3$ 
(see Theorem \ref{thm:Cauchy.pb}, now applied to time-independent solutions). 

At this point, since the domain $\Omega$ is assumed to be bounded and convex, 
it may seem natural to write the equations on a spherical reference manifold,
that is, in terms of functions defined on $\S^2$, like in \cite{BJL, BLL}.
However, this turns out to be not a convenient choice, 
because the symmetry properties proved in Theorem \ref{thm:Cauchy.pb} 
are not easy to handle when using nearly spherical sets. 
On the contrary, we find it more convenient to parametrize the boundary of $\Omega$ 
as the graph of a function $f$ (and its opposite $-f$) 
over the equatorial planar region $D$, see Lemma \ref{lem:craig.sulem}.

Observe that the symmetry of $\Omega$, joint with the assumption that $\Omega$ is smooth, 
immediately implies that the gradient of the profile function $f$ must blow up 
when approaching $\pa D$. 
This suggests that it is hard to use a strategy based on the implicit function theorem in Banach spaces. 
In fact, one might be tempted to prove Theorem \ref{thm:global} by showing that 
the trajectory $\alpha_0 \mapsto (\Omega(\alpha_0), u(\a_0))$, 
for values of the vorticity parameter $\alpha_0$ in a neighborhood of zero,
contains the only steady solutions close to $(\Om(0), u(0))$. 
In principle, this makes sense because we already observed that, 
for $\alpha_0=0$, the only solution $\Omega (0)$ is a ball, and $u(0) = 0$. 
However, since the gradient of the profile function $f$ blows up 
without an evident quantitative control,
it is hard to guess the correct function space where the implicit function theorem could be set.  

Parametrizing $\pa \Omega$ as the graph of $\pm f$ over $D$, we obtain 
a system of equations where all the unknown functions are defined 
on the unknown equatorial planar region $D$. 
We underline that this dimensional reduction is not the standard Craig-Sulem formulation 
of fluid dynamics as a problem on the boundary $\pa \Omega$.

The next step it to analyze the properties of the velocity field $u$, 
also using the time-independence assumption.
An accurate investigation of the problem shows that the torsion function $v_D$ 
of the equatorial planar region $D$, i.e., the solution of the problem 
\[ 
\Delta v_D = -1 \quad \text{in } D, \qquad 
v_D \in H^1_0(D),
\] 
plays an essential role. 
Indeed, we show that $D$ must support a solution to an overdetermined problem, 
namely a solution to a second order elliptic problem with both Dirichlet 
and Neumann-type conditions on $\pa D$, which is 
\begin{equation} \label{eq:over.determined.in.the.intro}
\frac{\alpha_0^2}{2}  | \nabla v_D|^{2}  
- \frac{ \sigma_0 \beta }{ | \nabla v_D| }  
+ \sigma_0 H_{\pa D}
= c 
\quad \ \text{on } \pa D,
\end{equation}
where $\beta,c$ are real constants, 
and $H_{\pa D}$ is the curvature of the boundary $\pa D$. 
It is not obvious that \eqref{eq:over.determined.in.the.intro} 
is strong enough to infer that $D$ is a disk, 
mainly because the constants $\beta$ and $c$ 
seem to be related to the nature of the original problem 
(which is three dimensional) and therefore to the profile function $f$. 
Nevertheless, an investigation of the smoothness assumption 
provides explicit formulas for $\beta$ and $c$ 
as functions of the planar set $D$, 
see Theorem \ref{thm:4.1} and Lemmas \ref{lem:constant.c}, \ref{lem:beta}.  
To conclude, we use special properties of the torsion function of a planar convex set 
to find an efficient bound for the constant $\beta$ (see \eqref{beta.1.2}).
This allows us to conclude that, if \eqref{eq:sigma.alpha} holds, 
then \eqref{eq:over.determined.in.the.intro} is sufficiently strong 
to guarantee that $D$ must be a disk. 
Once we have obtained this rigidity property, i.e., that $D$ is a disk, 
it is easy to check that the the profile function is radial and it can be 
computed by quadrature, see Remark \ref{rem:lopez}. 

\medskip

\underline{{\it Related literature}}. The axisymmetric equilibria configurations for capillary drops 
with constant vorticity were discovered experimentally by Plateau in \cite{Plateau} 
and computed by Poincaré \cite{Poincaré}, Rayleigh \cite{Rayleigh} 
and Chandrasekhar \cite{Chandrasekhar}, where their stability properties were also studied,
and more recently by Wente \cite{W} and Lopez \cite{Lo}. 

Concerning the free-boundary problems for fluids with constant vorticity, 
we mention for instance \cite{Wahlen, Wahlen.2007, Pasquali} 
for the Craig-Sulem formulation of the 2D ocean problem 
with or without general bottom topography, 
and \cite{LaS} for the 2D capillary drop problem with constant vorticity. 
The existence of global solutions for fluids with constant vorticity has been studied in 
\cite{Constantin.Strauss, Constantin.K.2009, Wahlen.2, Fan.Gao.2021}, 
and in \cite{BBMM, BFM} for periodic and quasi-periodic travelling waves. 
Travelling drops and bubbles in a two-fluid interaction have been constructed in \cite{MNS}, 
where an overdetermined problem also appears. 

Concerning rigidity results for the 3D fluid dynamics, 
we mention \cite{Wahlen} for the 3D ocean problem with constant vorticity, 
and \cite{Peralta.Salas} where localizable 3D Euler flows in bounded domains 
are proved to be axisymmetric, with rotationally symmetric domain 
whose transverse section is a disk or an annulus with convex boundary curves.


\medskip

\underline{{\it Structure of the paper}}.
In Section \ref{sec:prel} we report some definitions and preliminary results.
In Section \ref{sec:symmetry} we prove Theorems \ref{thm:Cauchy.pb}, \ref{thm:not.invariant.subset},
which also give a first reduction. 
In Section \ref{sec:necessary.condition} we show 
that if $D$ is convex, then the problem of finding a solution of \eqref{time.indep.sol}, 
\eqref{steady.rotating.solution} is connected to the overdetermined problem 
\eqref{eq:over.determined.in.the.intro} for the torsion function.
In Section \ref{sec:global} we conclude the proof of Theorem \ref{thm:global}. 

\medskip

\emph{Acknowledgements.} The authors thank Stefano Pasquali and Nicola Fusco 
for interesting discussions and nice suggestions.
This work is supported by GNAMPA 
and by University of Naples Federico II through FRA 2024 Geometric Topics in Fluid Dynamics.

\section{Notations and preliminary results} \label{sec:prel}
We start by briefly recalling  the notion of mean curvature, torsion function  and some useful formulas. 
For an exhaustive treatment of the subject, we refer, e.g., to \cite{Mantegazza}.

Let $d=2$ or $d=3$. For a bounded open set $E \subset \R^d$, 
we denote by $\nu_{\pa E}(x)$ the exterior unit normal. 
If $E$ is of class $C^2$, then there exists $\delta>0$ such that, 
defining $I_{\delta}(\pa E) := \{y\in \R^d: \, \mathrm{dist}(x, \pa E) < \delta\}$,
there exists $\tilde \nu \in C^1(I_\delta(\pa E))$  
that satisfies $\tilde \nu = \nu_{\pa E}$ on $\pa E$.
The mean curvature at $x \in \pa E$ is defined as the tangential divergence 
of the normal, namely 
\[
H_{\pa E} = \div_{\!\pa E} \nu_{\pa E}
= \div \tilde \nu - \la (D \tilde \nu) \,  \nu_{\pa E}, \nu_{\pa E} \ra,
\]
where 
$D \tilde \nu$ is the Jacobian matrix of the vector field $\tilde \nu$.
In dimension $d=2$, for a bounded convex open set $E \subset \R^2$, 
we recall the Gauss-Bonnet formula
\begin{equation} \label{Gauss.Bonnet}
\int_{\pa E} H_{\pa E}\,d\s = 2\pi.
\end{equation}
Observe that, with this definition, the mean curvature is positive if $E$ is convex, 
and $H_{\pa B} = d-1$, where $B$ is the unit ball of $\R^d$, 
and its boundary $\pa B = \S^{d-1}$ is the unit sphere.  
If $E$ is locally the subgraph of a $C^2(\R^{d-1})$ function $f$, at a point $(x',f(x'))$ it holds
\begin{equation}\label{eq:meancurvature.operator} 
H_{\pa E} (x',f(x'))= - \div \left( \frac{\nabla f(x')}{ \sqrt{1+|\nabla f(x')|^2}}\right)
=: H(f)(x').
\end{equation}
Moreover, if there exists a smooth function $g\in C^2_c(\R^d)$ such that 
$\Om = \{ x \in \R^d : g(x) > s \}$  
and $s$ is a regular value for $g$ (i.e., $\grad g(x) \neq 0$ for all $x$ such that $g(x) = s$),  
then for any $x\in \pa E$, it holds 
\begin{equation} \label{curvatura.level.set}
 H_{\pa E}(x) = -\div \Big( \frac{\nabla g(x)}{|\nabla g(x)|} \Big).
\end{equation}
We recall the tangential divergence theorem 
\begin{equation}\label{eq:tang.divergence}
\int_{\pa E} \div_{\pa E} X \, d\sigma
= \int_{\pa E} H_{\pa E} \la X, \nu_{\pa E} \ra \, d\sigma
\end{equation}
for any vector field $X\in C^1(\R^d)$.
Another useful integral identity involving the curvature is 
\begin{equation} \label{Reilly.in.sec.2}
\int_{\pa E} H_{\pa E} \la \grad u, \nu_{\pa E} \ra^2 \, d\sigma 
= \int_{E} \Big( (\Delta u)^2- |\nabla^2 u|^2 \Big) \, dx,
\end{equation}
where 
$u = 0$ on $\pa E$, 
$\grad^2 u = D^2 u$ is the Hessian matrix, and $|\nabla^2 u|^2$ is the sum of the square of its entries. 
This is a special case of Reilly formula \cite{Reilly.1977}; 
the special form \eqref{Reilly.in.sec.2} is, e.g., in \cite{Ros.1988}.

We recall an important symmetry result regarding the mean curvature of a set.
A first result in this direction is due to Li \cite{Lin} 
and it has been later generalized in the subsequent papers \cite{LN,LN1,LYY}).

\begin{theorem}[Li, Yan, Yao \cite{LYY}]\label{thm:Li}
Let $E \subset \R^d$ be a bounded open set of class $C^2$ 
such that, for any $x \in \pa E$ with $(\nu_{\pa E}(x))_d = 0$, 
there exists a vertical cylinder $C_r$ tangent to $\pa E$ at $x$ 
and such that $\overline C_r \cap \overline{E} \subset \pa E$. 
Assume further that for any two points $(x', a), \ (x',b) \in \pa E$ 
such that $\{(x', \lambda a + (1-\lambda)b ) : \lambda \in (0,1) \} \subset E$ it holds
\[
H_{\pa E}(x',a)= H_{\pa E}(x',b).
\]
Then there exists $\lambda\in \R$ such that $E$ is symmetric 
with respect to the plane $\{x_{d}=\lambda\}$.
\end{theorem}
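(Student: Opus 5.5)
This is the Alexandrov moving-plane method in the vertical direction $e_d$, adapted as in Li and Li--Nirenberg to a hypothesis that compares curvatures only at points on a common vertical chord. I would run the argument as follows.

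\emph{Start of the procedure.} For $\lambda\in\R$ let $T_\lambda=\{x_d=\lambda\}$. For $x=(x',x_d)$ write $x^\lambda=(x',2\lambda-x_d)$ for its reflection across $T_\lambda$. Let $\Sigma_\lambda=E\cap\{x_d>\lambda\}$ be the cap above the plane and $\Sigma_\lambda'$ its reflection. Since $E$ is bounded, for $\lambda$ slightly below $\max_{\overline E}x_d$ the cap is a thin graph over a small planar region, so $\Sigma_\lambda'\subset E$. Define
\[
\lambda_*=\inf\{\lambda:\ \Sigma_\mu'\subset E \text{ for all } \mu\ge\lambda\}.
\]
At $\lambda_*$, one of two first-contact situations occurs.
\begin{itemize}
\item[(a)] An interior contact: the reflected boundary $(\pa E\cap\{x_d>\lambda_*\})^{\lambda_*}$ touches $\pa E$ at a point $P^{\lambda_*}$ not on $T_{\lambda_*}$, with $P=(x',a)\in\pa E$.
\item[(b)] A contact on the plane: at some boundary point on $T_{\lambda_*}$ the normal is horizontal, i.e.\ $(\nu_{\pa E})_d=0$.
\end{itemize}

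\emph{Case (a).} Here $P^{\lambda_*}=(x',b)$ lies on the same vertical line as $P$, below it. By minimality of $\lambda_*$, the open vertical segment between $(x',b)$ and $(x',a)$ lies in $E$. This is exactly the configuration in the hypothesis, so $H_{\pa E}(x',a)=H_{\pa E}(x',b)$. Locally near $(x',b)$, write the reflected surface and $\pa E$ as graphs of $w_1\le w_2$ (with the appropriate orientation) over a neighbourhood of $x'$, touching at $x'$. The mean curvature operator \eqref{eq:meancurvature.operator} is quasilinear elliptic, and reflection preserves mean curvature up to the orientation sign. So $w_2-w_1$ satisfies a linear elliptic equation $Lw=H(w_2)-H(w_1)$ whose right-hand side is not fixed by the hypothesis away from the contact point.

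The key difficulty lies here. The hypothesis gives equality of curvatures only along vertical chords, not a global comparison, so a pure strong maximum principle does not apply directly. I would follow Li's trick: use the hypothesis pointwise at every $x'$ near the contact. For each such $x'$, the point of the reflected surface above $x'$ and the point of $\pa E$ above $x'$ both lie on the common chord through $(x',a)$, which is inside $E$ by the same minimality argument. Hence $H(w_1)(x')=H(w_2)(x')$ on a whole neighbourhood, giving $L(w_2-w_1)=0$ with $w_2-w_1\ge0$ vanishing at an interior point. The strong maximum principle then gives $w_1\equiv w_2$ locally. A connectedness/continuation argument extends this to the whole cap, so $E$ is symmetric about $T_{\lambda_*}$.

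\emph{Case (b).} At the point on $T_{\lambda_*}$ the normal is horizontal. The cylinder assumption (a vertical cylinder $C_r$ tangent at $x$ with $\overline C_r\cap\overline E\subset\pa E$) supplies the interior-sphere/corner condition needed for Serrin's boundary-point (corner) lemma. It also guarantees that nearby vertical chords stay inside $E$, so the hypothesis again yields local equality of curvatures. The Hopf/Serrin corner lemma then forces $w_1\equiv w_2$ near the point, and the same continuation applies.

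\emph{Main obstacle.} I expect the hardest step to be justifying that the hypothesis applies on a full neighbourhood of the contact point, i.e.\ that the relevant vertical chords lie in $E$. This is what turns the pointwise equality into an elliptic equation for the difference. In case (b) this must be done together with the degenerate geometry where the surfaces are not graphs over $x'$. There I would use the cylinder condition to write both surfaces locally as graphs over a vertical plane and apply Serrin's corner lemma.
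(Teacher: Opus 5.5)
The paper gives no proof of this statement. It is quoted from \cite{LYY} and applied only to convex domains, so your attempt has to be judged on its own merits.

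Your Case (a) is essentially fine. Near an interior contact point both sheets are graphs over the horizontal variable. The chord hypothesis then gives $\div T(\nabla w_1)=\div T(\nabla w_2)$ at the \emph{same} $y'$, where $T(p)=p/\sqrt{1+|p|^2}$, and the strong comparison principle applies.

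The gap is Case (b), which is the whole difficulty of the theorem. At $x_0\in\pa E\cap T_{\lambda_*}$ with horizontal normal you must, as you say, work over the vertical tangent plane. Write $\pa E=\{x_1=u(z)\}$ and the reflected cap as $\{x_1=u^*(z)\}$, with $u^*(z',z_d)=u(z',2\lambda_*-z_d)\le u(z',z_d)$ for $z_d<\lambda_*$. In this chart, two boundary points on a common vertical line are $(z',z_d)$ and $(z',t)$ with $u(z',t)=u(z',z_d)$. They share the value of $u$, not the parameter $z$.

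\begin{itemize}
\item The hypothesis identifies the curvature of the graph of $u$ at $z$ with the curvature of $\pa E$ at the upper height $t(z)$.
\item The curvature of the graph of $u^*$ at $z$ is instead the curvature of $\pa E$ at height $2\lambda_*-z_d$.
\item These two heights differ exactly where $u^*<u$, and the hypothesis says nothing about how $H$ varies between them.
\end{itemize}

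Hence you obtain neither $L(u-u^*)=0$ nor a one-sided inequality, and Serrin's corner lemma cannot be invoked. Your sentence ``the hypothesis again yields local equality of curvatures'' is true only in the horizontal chart. There the gradients blow up at $x_0$ and the operator degenerates, so no Hopf or corner lemma is available in that chart either.

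Assuming $H$ Lipschitz does not rescue this. The best you get is $|L(u-u^*)|\le C\,(u-u^*)/|\pa_{z_d}u|$, with $\pa_{z_d}u$ evaluated at intermediate upper points. But $\pa_{z_d}u$ vanishes at $x_0$, since the normal is horizontal there. So the zero-order coefficient blows up exactly at the corner, where the standard Hopf and corner lemmas break down. This is the obstruction studied by Li and Nirenberg, and it is the step for which \cite{LYY} need the cylinder hypothesis together with new Hopf-type arguments. That hypothesis is a one-sided tangency condition at points with horizontal normal. It holds trivially, with the cylinder outside $E$, when $E$ is convex. It is not the interior-ball/corner input you describe, which Serrin's lemma does not need anyway.

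If you only want the case the paper actually uses ($E$ convex, equal curvatures along vertical chords), you can avoid moving planes altogether:
\begin{itemize}
\item Write $E=\{x'\in\Pi,\ h^-(x')<x_d<h^+(x')\}$. The hypothesis then reads $\div T(\nabla h^-)=\div T(-\nabla h^+)$ in $\Pi$.
\item $T(\nabla h^-)$ and $T(-\nabla h^+)$ are the horizontal parts of the outer normal at the bottom and top points. Hence both tend uniformly to the outer normal of $\Pi$ at $\pa\Pi$.
\item Multiply by the bounded function $h^-+h^+$, integrate over $\{x'\in\Pi:\mathrm{dist}(x',\pa\Pi)>\epsilon\}$, and let $\epsilon\to0$. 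This gives $\int_\Pi\la\nabla(h^-+h^+),T(\nabla h^-)-T(-\nabla h^+)\ra\,dx=0$.
\item Strict monotonicity of $T$ then forces $h^++h^-$ to be constant, which is symmetry about a horizontal plane.
\end{itemize}
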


Actually, in \cite{LYY} Theorem \ref{thm:Li} is stated in a more general context, 
assuming that the mean curvature is nonincreasing in the $x_d$ direction; 
here, instead, we state the simplified version that is used in the present paper. 

We also recall the notion of torsion function, 
which is an essential tool for our analysis. 
Let $E \subset \R^d$  be an  open bounded  set.  
We denote by $v_E$ the torsion function of the set $E$, 
which is the unique solution of 
\[
\begin{cases}
\Delta v_E = -1 \ \ \text{ in } E, 
\\
v_{E} \in H^1_0(E).
\end{cases}
\] 
Some properties of this function are immediate to obtain.
In fact, by maximum principle, it is immediate to show that 
$v_E(x) \geq 0$ and equality holds if and only if $x\in \pa E$. 
Moreover, since $v_E$ is constant on $\pa E$, for any $x \in \pa E$ it holds 
\begin{equation}\label{torsion.gradient}
\nabla v_E(x) = -|\nabla v_E(x)| \nu_{\pa E}(x).
\end{equation}
Other useful identities about the torsion function are
\begin{align}
& \int_E v_E \, dx = \int_E v_E (-\Delta v_E) \, dx 
= \int_E |\grad v_E|^2 \, dx, 
\label{torsion.integral} 
\\ 
& \int_{\pa E} |\nabla v_E| \, d\sigma 
= \int_{\pa E} - \la \grad v_E, \nu_{\pa E} \ra \, d\sigma 
= - \int_E \Delta v_E \, dx 
= |E|. 
\label{torsion.gradient.integral}
\end{align}

A rigidity result concerning the torsion function is the following theorem by Serrin.

\begin{theorem}[Serrin \cite{S}, Theorem 3] 
\label{thm:serrin}
Let $E$ be a bounded open set in $\R^d$ with $C^3$ boundary. 
Let $v_E$ be its torsion function. 
If there exists a nonincreasing function $\mathrm f$ such that
\begin{equation} \label{serrin.f.H}
\frac{\pa v_E}{\pa \nu_{\pa E}} = \mathrm f(H_{\pa E})\quad \text{ on }\pa E,
\end{equation}
then $E$ is a ball.
If $|\nabla v_E|$ is a nondecreasing function of $H_{\pa E}$ on $\pa E$, 
then $E$ is a ball. 
\end{theorem}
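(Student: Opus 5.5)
This is Serrin's theorem, quoted from \cite{S}. The plan is to reprove it with the Alexandrov--Serrin method of moving planes applied to the torsion function $v_E$. Since $v_E = 0$ on $\pa E$ and $v_E > 0$ in $E$, identity \eqref{torsion.gradient} gives $\frac{\pa v_E}{\pa \nu_{\pa E}} = -|\grad v_E|$. Hence the two hypotheses coincide: ``$\mathrm f$ nonincreasing'' is the same as ``$|\grad v_E|$ is a nondecreasing function of $H_{\pa E}$''. It therefore suffices to treat the second formulation.

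\textbf{Setup.} Fix a unit direction $e$ and let $T_\lambda = \{x\cdot e = \lambda\}$. Let $\Sigma_\lambda = E \cap \{x\cdot e > \lambda\}$ be the cap cut off by $T_\lambda$, and $\Sigma'_\lambda$ its reflection across $T_\lambda$; write $x^\lambda$ for the reflected point. Starting from $\lambda$ near $\max_{\overline E} x\cdot e$, decrease $\lambda$ as long as $\Sigma'_\lambda \subset E$. By $C^3$ regularity the process stops at a critical value $\bar\lambda$ in one of two ways:
\begin{itemize}
\item[(a)] $\Sigma'_{\bar\lambda}$ becomes internally tangent to $\pa E$ at a point $P \notin T_{\bar\lambda}$;
\item[(b)] $T_{\bar\lambda}$ becomes orthogonal to $\pa E$ at a point $Q$.
\end{itemize}
On $\Sigma'_{\bar\lambda}$ set $w(x) = v_E(x^{\bar\lambda}) - v_E(x)$. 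Both terms solve $\Delta(\cdot) = -1$, so $\Delta w = 0$. On the part of $\pa\Sigma'_{\bar\lambda}$ lying in $T_{\bar\lambda}$ we have $w = 0$. On the part lying inside $\overline E$ we have $w = 0 - v_E(x) \le 0$. By the maximum principle, either $w < 0$ in $\Sigma'_{\bar\lambda}$ or $w \equiv 0$. If $w \equiv 0$, then $E$ is symmetric about $T_{\bar\lambda}$ (via the zero set of $v_E$).

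\textbf{Case (a).} At $P$ we have $w(P) = 0$, and the same normal $\nu$ is shared by $\pa E$ and $\pa\Sigma'_{\bar\lambda}$. Since $\Sigma'_{\bar\lambda} \subset E$ is tangent from inside, we get $H_{\pa E}(P) \le H_{\pa E}(P')$, where $P' = P^{\bar\lambda} \in \pa E$. Then $w$ is harmonic and of one sign, and attains the value $0$ at $P$. The Hopf lemma gives a strict inequality between $\frac{\pa w}{\pa\nu}(P) = |\grad v_E(P)| - |\grad v_E(P')|$ and $0$. Monotonicity of $|\grad v_E|$ in $H_{\pa E}$ gives a non-strict inequality between the same quantities. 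The goal is for these two to contradict each other, unless $w \equiv 0$.

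\textbf{Case (b).} The corner point $Q$ is handled by Serrin's refined corner lemma. One checks that $w$ vanishes to second order at $Q$, using the boundary relation between $|\grad v_E|$ and $H_{\pa E}$ and a second-order expansion of $v_E$ at $Q$. The corner lemma then forces $w \equiv 0$.

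\textbf{Conclusion.} In either case $E$ is symmetric with respect to a hyperplane orthogonal to $e$. Since $e$ is arbitrary, $E$ is a ball.

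\textbf{Main obstacle.} I expect the hardest step to be matching signs in case (a). A naive reading of the geometric comparison $H_{\pa E}(P) \le H_{\pa E}(P')$ and of the Hopf inequality seems to point in the \emph{same} direction, which would give no contradiction. I would first recheck carefully the orientation of the reflected normal derivative and the sign of $w$, since that determines which monotonicity hypothesis is the right one. If the contradiction still fails, I would follow Serrin's original treatment, where the hypothesis is used together with a second-order comparison at the touching point. Case (b) requires the technical second-order corner computation, which I would import from \cite{S} rather than redo.
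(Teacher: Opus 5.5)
The paper gives no proof of this statement. It quotes it from Serrin, and Serrin's proof is exactly the moving-plane argument you sketch. Your observation that the two formulations coincide because $\pa_\nu v_E=-|\nabla v_E|$ on $\pa E$ is correct. As written, however, the proposal does not close either case, and in case (b) it asserts something that does not hold here.

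In case (a) the obstacle you anticipate does not exist. Since $w\le 0$ in $\Sigma'_{\bar\lambda}$ and $w(P)=0$, the point $P$ is a boundary maximum of the harmonic function $w$. Unless $w\equiv0$, Hopf's lemma gives $\pa_\nu w(P)>0$, i.e.\ $|\nabla v_E(P)|>|\nabla v_E(P')|$. On the other hand, $H_{\pa E}(P)\le H_{\pa E}(P')$ together with the monotonicity gives $|\nabla v_E(P)|\le|\nabla v_E(P')|$. These inequalities point in opposite directions, so case (a) closes at first order. No second-order comparison at $P$ is needed.

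The genuine gap is case (b). Vanishing of $w$ to second order at $Q$ is what happens in Serrin's constant-data theorem; here it does not follow from the boundary relation.

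Put $e=e_1$, $\bar\lambda=0$, $Q=0$, $\nu_{\pa E}(Q)=e_d$, with the cap on the side $x_1>0$. Take $s=-(e_1+e_d)/\sqrt2$, which enters $\Sigma'_{\bar\lambda}$ nontangentially. Then $\pa_s w(Q)=0$ and $\pa_s^2w(Q)=\pa_1\pa_d w(Q)=-2\,\pa_1\pa_dv_E(Q)=2\,\pa_e|\nabla v_E|(Q)$. This last quantity is the rate of change of $|\nabla v_E|$ along $\pa E$ in the direction $e$, and nothing forces it to vanish. Serrin's corner lemma gives $\pa_s^2w(Q)<0$ unless $w\equiv0$. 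So what you must prove is the one-sided bound $\pa_e|\nabla v_E|(Q)\ge0$.

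The missing ingredient is a third-order geometric fact about the critical configuration.
\begin{itemize}
\item Write $E=\{x_d<\phi(x_1,y)\}$ near $Q$, with $y=(x_2,\dots,x_{d-1})$. The inclusion $\Sigma'_{\bar\lambda}\subset E$ means $\phi(t,y)\le\phi(-t,y)$ for $t>0$.
\item Hence $\pa_1\phi(0,y)\le0$, with equality at $y=0$. Therefore $\pa_1^3\phi(0)\le0$ and $\pa_1\pa_j^2\phi(0)\le0$ for $2\le j\le d-1$.
\item Consequently $\pa_eH_{\pa E}(Q)=-\sum_{j=1}^{d-1}\pa_1\pa_j^2\phi(0)\ge0$. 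This is where the $C^3$ hypothesis enters.
\end{itemize}

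If $\mathrm f$ is $C^1$, then $\pa_e|\nabla v_E|(Q)=-\mathrm f'(H_{\pa E}(Q))\,\pa_eH_{\pa E}(Q)\ge0$, which gives the contradiction. This covers the paper's application, where $\mathrm f=-F_1^{-1}(c-\sigma_0\,\cdot\,)$ is smooth on the relevant range. For a merely monotone $\mathrm f$ an extra argument is needed. One option is Hopf's lemma for the linearized mean-curvature operator, applied to $\phi(t,y)-\phi(-t,y)$ on $\{t>0\}$.

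Finally, your steps ``either $w<0$ or $w\equiv0$'' and ``$w\equiv0$ implies $E$ symmetric'' use connectedness of $E$. Serrin assumes it, but the statement as quoted omits it. Two disjoint congruent balls satisfy the hypothesis without being a ball.
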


When $E$ is convex, the torsion function has some very special properties. 
To this regard, we state the following theorems. 

\begin{theorem}[Makar-Limanov \cite{ML}, Kawohl \cite{Kaw}, Kennington \cite{Ken}]
\label{thm:torsion.square.root.concave}
Let $E$ be a bounded convex open set with $C^2$ boundary. 
Then $v_E$ is $1/2$-concave, meaning that $\sqrt{v_E}$ is concave.
\end{theorem}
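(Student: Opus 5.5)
The plan is to prove that $w:=\sqrt{v_E}$ is concave by a concavity maximum principle in the spirit of Korevaar and Kennington. Since $v_E>0$ in $E$, the function $w$ is positive and smooth in $E$ and vanishes on $\pa E$. From $v_E=w^2$ and $\Delta v_E=-1$ one gets $2w\Delta w+2|\grad w|^2=-1$, that is,
\[
\Delta w = b(w,\grad w), \qquad b(t,p):=-\frac{1+2|p|^2}{2t}, \qquad t>0 .
\]
The structural facts I will use are:
\begin{itemize}
\item for fixed $p$, the map $t\mapsto b(t,p)$ is strictly increasing;
\item $-1/b(t,p)=2t/(1+2|p|^2)$ is linear, hence concave, in $t$.
\end{itemize}
The second fact is the harmonic-concavity type hypothesis in Kennington's improved version of Korevaar's principle.

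Introduce the concavity function
\[
\mathcal C(x,y,\lambda)=w\big((1-\lambda)x+\lambda y\big)-(1-\lambda)w(x)-\lambda w(y),
\]
defined on $\overline E\times\overline E\times[0,1]$; it is well defined because $E$ is convex. The goal is $\mathcal C\ge 0$. Argue by contradiction: suppose $\min\mathcal C<0$, attained at some $(\bar x,\bar y,\bar\lambda)$. Then $\bar\lambda\in(0,1)$ and $\bar x\neq\bar y$, and we set $\bar z=(1-\bar\lambda)\bar x+\bar\lambda\bar y$.

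\textbf{Step 1 (boundary exclusion).} I must rule out $\bar x\in\pa E$ or $\bar y\in\pa E$. Here I use the $C^2$ regularity and convexity of $E$. By Hopf's lemma, $|\grad v_E|\ge c>0$ on $\pa E$, so $w\simeq\sqrt{c\,\mathrm{dist}(\cdot,\pa E)}$ near $\pa E$ and $|\grad w|\to\infty$ there. If, say, $\bar x\in\pa E$ with $\bar y$ fixed, then moving $\bar x$ slightly inward along the inner normal makes $w(\bar x)$ grow like the square root of the displacement. The term $w(\bar z)$ changes only linearly, since $\bar z$ stays a positive distance inside $E$ by convexity, unless $\bar x$ and $\bar y$ lie on a common boundary segment, which is handled similarly. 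Hence $\mathcal C$ strictly decreases, contradicting minimality. This is Kennington's standard argument, and strict interior convexity of $E$ is what makes it work.

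\textbf{Step 2 (interior minimum).} Now let $\bar x,\bar y,\bar z\in E$. The first-order conditions in $x$, $y$ and $\lambda$ give
\[
\grad w(\bar x)=\grad w(\bar y)=\grad w(\bar z)=:p .
\]
Minimality gives $w(\bar z)<(1-\bar\lambda)w(\bar x)+\bar\lambda w(\bar y)$. For the second-order conditions, vary $x=\bar x+\xi$, $y=\bar y+\eta$ with suitably coupled perturbations $\xi,\eta$, choosing $\eta$ parallel to $\xi$ with weights tuned as in Kennington's refinement. Summing over an orthonormal basis then yields an inequality of the form
\[
\Delta w(\bar z)\ \ge\ \text{a weighted harmonic-type mean of } \Delta w(\bar x),\ \Delta w(\bar y).
\]
Substituting $\Delta w=b(w,p)$, all three points share the same $p$, so only the $t$-dependence of $b$ enters. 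Concavity of $-1/b(\cdot,p)$ turns the mean inequality into $b(w(\bar z),p)\ge b\big((1-\bar\lambda)w(\bar x)+\bar\lambda w(\bar y),p\big)$. Strict monotonicity of $b$ in $t$ then forces $w(\bar z)\ge(1-\bar\lambda)w(\bar x)+\bar\lambda w(\bar y)$, which is the desired contradiction.

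\textbf{Main obstacle.} I expect the delicate part to be the second-order step. With the naive choice $\xi=\eta$, Korevaar's principle requires strict monotonicity together with concavity of $b$ in $t$. Here $b$ is convex in $t$, so that version fails. One needs Kennington's weighted perturbations to get the sharper harmonic-mean inequality, and this is the step I would check most carefully.

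An alternative route, if this becomes messy, is to approximate $E$ by smooth strictly convex sets and use a continuity/deformation argument in the domain, in the style of Makar-Limanov's planar proof. In dimension $2$ one can instead verify directly that the explicit quantity $v_E\Delta v_E-|\grad v_E|^2$, suitably combined with the Hessian, is superharmonic. I would also run the boundary analysis in Step 1 on smooth approximations and pass to the limit using uniform convergence of torsion functions; concavity is preserved under this limit.
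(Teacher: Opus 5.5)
The paper gives no proof of this statement; it quotes it from \cite{ML}, \cite{Kaw}, \cite{Ken}. Your argument is essentially Kennington's proof in \cite{Ken}, via the refined concavity maximum principle, and it goes through in every dimension. It uses only convexity of $E$ and the growth $v_E(x_0-\epsilon\nu_{\pa E}(x_0))\geq c\,\epsilon$ along inner normals. The $C^2$ hypothesis supplies this growth through interior tangent balls, as in the proof of Lemma \ref{lem:lower.bound}.

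However, the convexity labels in your Step 2 and in your ``main obstacle'' remark are reversed. Set $m:=(1-\bar\lambda)w(\bar x)+\bar\lambda w(\bar y)>w(\bar z)$.

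\textbf{The naive choice.} With your conventions, $\xi=\eta$ gives $b(w(\bar z),p)\geq(1-\bar\lambda)b(w(\bar x),p)+\bar\lambda b(w(\bar y),p)$. This contradicts $w(\bar z)<m$ when $b$ is increasing and \emph{convex} in $t$. Your $b=-(1+2|p|^2)/(2t)$ is \emph{concave} in $t$, and that is the real reason the naive version fails.

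\textbf{The weighted choice.} Set $B:=-\Delta w>0$ and take $x=\bar x+\xi/B(\bar x)$, $y=\bar y+\xi/B(\bar y)$, $\lambda=\bar\lambda$. The second variation, summed over an orthonormal basis, gives
\[
\frac{1}{B(\bar z)}\geq\frac{1-\bar\lambda}{B(\bar x)}+\frac{\bar\lambda}{B(\bar y)} .
\]
In general this becomes $w(\bar z)\geq m$ when $-1/b=1/B$ is increasing and \emph{convex} in $t$, not concave. Here $1/B=2w/(1+2|p|^2)$ is linear in $w$, and $p$ is the same at all three points. So the displayed inequality is literally $w(\bar z)\geq m$, and the contradiction is immediate; nothing in your argument actually breaks.

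\textbf{Step 1.} You need neither strict convexity nor a special case for boundary segments. If $\bar x,\bar y\in\pa E$, then $\mathcal C(\bar x,\bar y,\bar\lambda)=w(\bar z)\geq 0$, which cannot be a negative minimum. If exactly one of them lies on $\pa E$, then $\bar z\in E$ for any open convex $E$, and your square-root-versus-linear comparison applies directly. So the approximation by strictly convex sets you mention is unnecessary.

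By contrast, Makar-Limanov's original proof \cite{ML} is restricted to $d=2$. It works directly with an explicit expression in $v_E$ and its first and second derivatives, of the same type as the formula for $\det\nabla^2\sqrt{v}$ that the paper computes in the proof of Lemma \ref{lem:inequality.for.gradient1}. Your route avoids that dimension-specific computation, at the price of the doubling-of-variables machinery.
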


Theorem \ref{thm:torsion.square.root.concave}  has been  first  proved in \cite{ML} for $d=2$, 
then in \cite{Kaw} and \cite{Ken} for $d>2$. 
Finally, we also recall the gradient estimate for the torsion function of planar convex open sets.

\begin{theorem}[Classical; see, e.g., \cite{BBBH, HS}]
\label{teo:grad.estimate}
Let $E \subset \R^2$ be an open convex set. 
Then the torsion function $v_E$ satisfies
\[
\| \nabla v_E \|_{L^\infty(E)} \leq c |E|^\frac12
\]
for some universal constant $c < (2\pi)^{-\frac{1}2}$.
\end{theorem}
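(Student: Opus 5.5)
The plan is to reduce the gradient bound to a bound on $\max_E v_E$, in two steps: a maximum principle for a $P$-function of Payne type, then a symmetrization estimate for the maximum of the torsion function. This gives the explicit constant $c = (4\pi)^{-1/2}$, which is strictly smaller than $(2\pi)^{-1/2}$.

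\emph{Step 1 ($P$-function).} In dimension $d=2$ consider
\[
P := |\grad v_E|^2 + v_E .
\]
Using $\Delta v_E = -1$ one computes
\[
\Delta P = 2|\grad^2 v_E|^2 - 1 \ge (\Delta v_E)^2 - 1 = 0,
\]
where the inequality is $|\grad^2 v_E|^2 \ge \tfrac12(\Delta v_E)^2$ in dimension $2$. Hence $P$ is subharmonic and attains its maximum on $\overline E$. To first assume $C^2$ (or smooth) boundary, I would approximate a general convex $E$ from inside by smooth convex sets; the torsion functions then converge locally uniformly together with their gradients, by interior estimates.

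The step I expect to be the main obstacle is ruling out a maximum of $P$ at a boundary point. This is where convexity enters, following Payne's classical argument.
\begin{itemize}
\item At a point $x_0 \in \pa E$ we have $v_E = 0$ and $\grad v_E = -|\grad v_E|\nu_{\pa E}$ by \eqref{torsion.gradient}.
\item Writing the equation $\Delta v_E = -1$ on the boundary in normal and tangential coordinates gives $\pa_{\nu\nu} v_E = -1 + H_{\pa E}\,|\grad v_E|$.
\item Therefore $\pa_\nu P = 2|\grad v_E|\,\pa_{\nu\nu}v_E + \pa_\nu v_E = -|\grad v_E|\big(1 + 2H_{\pa E}|\grad v_E|\big) < 0$, because $H_{\pa E}\ge 0$ by convexity and $|\grad v_E| > 0$ on $\pa E$ by Hopf's lemma.
\item By Hopf's lemma for the subharmonic function $P$, a boundary maximum would instead force $\pa_\nu P > 0$ unless $P$ is constant. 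So the maximum of $P$ lies in the interior.
\end{itemize}
I then still have to push the interior maximum point to a critical point of $v_E$. For this I would follow Payne's argument, using the equation $\grad P = 2\grad^2 v_E\,\grad v_E + \grad v_E = 0$ at the maximum point. This is a standard but delicate point; in the worst case I would cite \cite{BBBH, HS}. The conclusion is
\[
|\grad v_E|^2 \le \max_E P = \max_E v_E .
\]

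\emph{Step 2 (bound on $\max v_E$).} By Talenti's comparison theorem, the Schwarz symmetral of $v_E$ is pointwise bounded by the torsion function of the disk $E^*$ with $|E^*| = |E|$. That torsion function is $(R^2-|x|^2)/4$ with $\pi R^2 = |E|$, so
\[
\max_E v_E \le \frac{R^2}{4} = \frac{|E|}{4\pi}.
\]
Combining the two steps gives $\|\grad v_E\|_{L^\infty(E)} \le (4\pi)^{-1/2}|E|^{1/2}$, and $(4\pi)^{-1/2} < (2\pi)^{-1/2}$, as claimed.
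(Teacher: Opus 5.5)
Your Step~1 fails, and the constant $(4\pi)^{-1/2}$ you end up with is false. The equilateral triangle of height $h$ is an explicit counterexample. Let $\ell_1,\ell_2,\ell_3$ be the distances to the sides, so that $\ell_1+\ell_2+\ell_3=h$ and $\la\grad\ell_i,\grad\ell_j\ra=-\frac12$ for $i\neq j$. Then $v_E=\ell_1\ell_2\ell_3/h$, which gives $\|\grad v_E\|_\infty=h/4$ (attained at the midpoints of the sides), $\max_E v_E=h^2/27$ and $|E|=h^2/\sqrt3$. Hence your intermediate inequality $|\grad v_E|^2\le\max_E v_E$ fails, since $1/16>1/27$. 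Moreover $\|\grad v_E\|_\infty=\frac{3^{1/4}}{4}|E|^{1/2}\approx 0.329\,|E|^{1/2}$, which exceeds $(4\pi)^{-1/2}|E|^{1/2}\approx0.282\,|E|^{1/2}$. The disk gives equality in your bound but is not extremal.

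There are two concrete mistakes.
\begin{itemize}
\item[(a)] The boundary derivative has the wrong sign. Since $\grad v_E=-|\grad v_E|\nu$ on $\pa E$, one has $\pa_\nu|\grad v_E|^2=-2|\grad v_E|\,\pa_{\nu\nu}v_E$. Therefore $\pa_\nu P=|\grad v_E|\,(1-2H_{\pa E}|\grad v_E|)$, which has no sign. It is positive on flat pieces of $\pa E$, and it vanishes on a circle, where $P\equiv R^2/4$; your formula would give $-R$ there.
\item[(b)] The logic cannot work anyway. $P=|\grad v_E|^2+v_E$ is subharmonic, so its maximum lies on $\pa E$, where $P=|\grad v_E|^2$, and you learn nothing. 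Excluding boundary maxima would force an interior maximum, hence $P$ constant, $\Hess v_E=-\frac12 I$, and $E$ a disk.
\end{itemize}

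The correct Payne function in dimension $2$ is $Q:=|\grad v_E|^2+2v_E$. It is not subharmonic. However, a direct computation shows that at points where $\grad v_E\neq0$ it satisfies $\Delta Q+\la W,\grad Q\ra=0$ with $W=(2\grad v_E-\grad Q)/|\grad v_E|^2$. On $\pa E$ one has $\pa_\nu Q=-2H_{\pa E}|\grad v_E|^2\le 0$. By the strong maximum principle and Hopf's lemma (using $|\grad v_E|>0$ near $\pa E$ and the uniqueness of the critical point), the maximum of $Q$ is attained at the critical point of $v_E$. This yields $|\grad v_E|^2\le 2(\max v_E-v_E)$, which is sharp for strips.

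Combined with your Step~2, which is correct, this gives $\|\grad v_E\|_\infty\le(2\pi)^{-1/2}|E|^{1/2}$. That is exactly the threshold, with a non-strict inequality; this is the classical bound. To obtain a universal $c<(2\pi)^{-1/2}$, as the statement asserts, you need an extra argument. Equality in Payne's bound occurs only in the strip limit and equality in Talenti's only for the disk, and this gap must be made uniform over all convex sets, either by compactness or quantitatively. That improvement is what the works the paper cites supply; the paper itself gives no proof, only the citation.

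For the paper's application the non-strict bound would actually suffice. There one only needs $|\grad v_D|\in(0,t_1]$, and $F_1$ is strictly decreasing on that closed interval.
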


The proof of Theorem \ref{teo:grad.estimate} is quite classical;  
many works in the recent years have been devoted to find the optimal value   
of the constant $c$, since it is related to the Hermite-Hadamard inequality.  
We mention, for instance, the papers \cite{BBBH} and \cite{HS}.

\section{Time evolution and constant vorticity}
\label{sec:symmetry}

In this section we prove our results about the compatibility of the 
constant vorticity condition \eqref{eq:constant.curl} 
with the equations \eqref{eq:system} for the time evolution of the capillary drop, 
namely we prove Theorems \ref{thm:Cauchy.pb} and \ref{thm:not.invariant.subset}.

\begin{proof}[\textbf{Proof of Theorem \ref{thm:Cauchy.pb}.}]
The existence of a $\delta>0$ such that a solution to \eqref{eq:system} exists in $(-\delta,\delta)$
is guaranteed by \cite[Theorem 1.1]{CS}.
Assume that
the velocity field $u$ has constant vorticity $ w=\curl u= \alpha_0 e_3$ for $t\in (-\delta, \delta)$. This implies that 
\begin{equation}\label{eq:curl.u}
\begin{pmatrix}
\pa_2u_3-\pa_3u_2 
\\
\pa_3 u_1 - \pa_1 u_3
\\
\pa_1 u_2 - \pa_2u_1
\end{pmatrix}
= \alpha_0 
\begin{pmatrix}
0\\ 0\\ 1
\end{pmatrix}.
\end{equation}
Moreover, the vector field $w= \curl u$ satisfies the vorticity equation 
\[
\pa_t w + \la u , \grad \ra w  
= \langle w, \nabla \rangle u.
\]
Since $w = \alpha_0 e_3$ is constant (independent of $t$ and $x$), we have
\begin{equation} \label{eq:vort.equation}
0=\langle w, \nabla \rangle u =\alpha_0
\begin{pmatrix}
\pa_3 u_1 \\
\pa_3 u_2 \\
\pa_3 u_3
\end{pmatrix}.
\end{equation}
Since $\alpha_0\not =0$,
the velocity field $u$ does not depend on the third variable, i.e., $u(t,x_1,x_2,x_3)=u(t,x_1,x_2)$. 
Thus, \eqref{eq:curl.u} implies that $\pa_2 u_3$ and $\pa_1 u_3$ vanish, 
so that $\grad u_3 = 0$, that is, 
$u_3(t,x) = u_3(t)$ is independent of $x$.  
This implies that $u_3=0$, because $\int_{\Omega_t}u=0$. 
Since $u_3=0$, from the first equation in \eqref{eq:system} we deduce that
\begin{equation} \label{eq:pressure.equation}
\pa_3 p=0,
\end{equation} 
whence $p(t,x_1,x_2,x_3)=p(t,x_1,x_2)$.  
Moreover, the third equation in \eqref{eq:system} implies the following symmetry property:
\begin{equation}\label{eq:curvature.symmetric}
\text{if } (x_1,x_2,x_3) \text{ and } (x_1,x_2, y_3) \in \pa \Omega_t 
\ \ \Longrightarrow \ \ 
H_{\pa \Omega_t}(x_1,x_2,x_3)= H_{\pa \Omega_t} (x_1,x_2,y_3).
\end{equation}
Since $\Omega_0$ is convex, the assumption of Theorem \ref{thm:Li} are satisfied 
and hence we find that there exists $\lambda_0 \in \R$ such that 
the set $\Omega_0$
is symmetric with respect to the plane $\{x_3=\lambda_0\}$.
Since $\Omega_0$ is symmetric and strictly convex, 
there exists a concave function 
\[
f \in C^5(D,\R) \cap C(\overline{D},\R), \quad \ 
f > 0 \text{ in } D, \quad \ 
f=0 \text{ on } \pa D, 
\] 
where $D$ is the planar set $\Om_0 \cap \{ x_3 = \lm_0 \}$, or, more precisely, 
\[
D := \{ x' \in \R^2 : (x',\lm_0) \in \Omega_0 \},
\] 
such that
\[
\pa \Omega_0 
= \{(x', \lambda_0 +f(x')) : x' \in D\} 
\cup  \{(x', \lambda_0 -f(x')) : x' \in D\}
\cup \Sigma_0,
\]
where
\[
\Sigma_0= \pa D \times\{\lambda_0\}.
\]
In fact, from the convexity of $\Om_0$ it is immediate to show that 
$\pa \Omega_0 \cap \{x_3>\lambda_0\}$ is a graph over $D$.  
The regularity of the function $f$ comes from the regularity of the boundary $\pa \Omega$.
\end{proof}

\begin{proof}[\textbf{Proof of Theorem \ref{thm:not.invariant.subset}.}]
Let $\Omega_0 = B_1$, $u_0 = \frac{\alpha_0}{2} (-x_2, x_1, 0)$, 
and let $(\Om_t, u)$ be the unique smooth solution of the Cauchy problem for \eqref{eq:system} 
in some time interval $(-\delta, \delta)$, $\delta > 0$, 
with initial data $(\Omega_0, u_0)$ at time $t=0$. 
We calculate 
\begin{equation*}
\div (u_0\cdot\grad u_0) 
= - \frac{\a_0^2}{4} \, \div \!\! \begin{pmatrix}x_1 \\ x_2 \\ 0 \end{pmatrix}
= - \frac{\a_0^2}{2}.
\end{equation*}
Hence, taking the divergence of the first equation in \eqref{eq:system}, 
and recalling that the curvature of the unit sphere $\S^2$ is 2,   
we get that the pressure at time $t=0$ verifies 
\[
\begin{cases}
\Delta p = \a_0^2/2  
& \text{in } \Om_0 = B_1,
\\
p = 2 \s_0  
& \text{on } \pa \Om_0 = \S^2. 
\end{cases}
\]
The solution to this problem is 
\begin{equation*}
p(0,x) = \frac{\a_0^2}{12}(x_1^2 + x_2^2 + x_3^2 - 1) + 2\s_0,
\end{equation*}
which depends nontrivially on $x_3$, 
with $\pa_3 p(0,x) = (\a_0^2 / 6) x_3$.  
If $\curl u$ is constant in the time interval $(-\delta, \delta)$, 
then, as proved above, \eqref{eq:pressure.equation} holds, and this is a contradiction. 
\end{proof}

\begin{remark}[Regularity and convexity assumptions]
\label{rem:convex.hyp}
As already explained in the introduction, 
the study of time-independent solutions requires much less regularity. 
In fact, since there is no need of using the short time existence result of Coutand and Shkoller, 
the assumption $\Omega$ of class $C^2$ and strictly convex 
is sufficient to guarantee that $\Omega_0$ admits a plane of symmetry, 
which is enough to make the argument of the next sections work.

We also observe that the strict convexity of $\Omega_0$ is actually not needed, 
but it makes the presentation less heavy. This is discussed in Remark \ref{rmk:final} below.  
\end{remark}

\section{An overdetermined problem and rigidity}
\label{sec:necessary.condition}

In this section we begin the study of the time-independent problem 
\eqref{steady.rotating.solution}, starting with the following basic observation. 

\begin{lemma} \label{lemma:proof.thm.1.7.i}
Assume the hypotheses of Theorem \ref{thm:global}. 
Then item $(i)$ of Theorem \ref{thm:global} holds. 
\end{lemma}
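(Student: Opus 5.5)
The plan is to repeat the argument of the proof of Theorem \ref{thm:Cauchy.pb}, now in the stationary setting. Time-independence replaces the use of the vorticity evolution equation, so no local existence result is needed.

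\emph{Step 1: $u$ is independent of $x_3$.} Since $u\in C^2(\Om)$ and $\curl u=\a_0 e_3$ is constant, we have $\curl(\la u,\grad\ra u)=\la u,\grad\ra \curl u-\la \curl u,\grad\ra u+(\div u)\curl u$. The first term vanishes because the vorticity is constant, and the last vanishes because $\div u=0$. The first equation of \eqref{steady.rotating.solution} says $\la u,\grad\ra u=-\grad p$, so its curl is zero. We still need $p\in C^2$ to justify this: from $\grad p=-\la u,\grad\ra u\in C^1$ we get $p\in C^2$, so the curl of $\grad p$ vanishes. This gives $\a_0\pa_3 u=0$, exactly as in \eqref{eq:vort.equation}, and hence $\pa_3 u=0$ whenever $\a_0\neq0$. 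The case $\a_0=0$ needs separate care. There the argument recalled in the introduction gives $u=0$ and $\Om$ a ball (since $\Om$ is convex, it is simply connected), and a ball trivially satisfies $(i)$.

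\emph{Step 2: $u_3=0$ and $p=p(x')$.} From \eqref{eq:curl.u} with $\pa_3u=0$ we get $\pa_1u_3=\pa_2u_3=0$, so $u_3$ is constant in $\Om$ (which is connected), and by continuity also on $\overline\Om$. Here we cannot use the barycenter assumption. Instead we use the boundary condition $\la u,\nu_{\pa\Om}\ra=0$ at the top point of $\pa\Om$, where $x_3$ is maximal and $\nu_{\pa\Om}=e_3$; this yields $u_3=0$. The third component of the momentum equation then gives $\pa_3p=0$. Hence $p$ depends only on $x'$ and, by continuity up to the boundary, so does its trace. The condition $p=\s_0H_{\pa\Om}$ then gives the curvature symmetry \eqref{eq:curvature.symmetric}.

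\emph{Step 3: geometry.} Next we apply Theorem \ref{thm:Li}. Its hypothesis on vertical tangent cylinders at points with horizontal normal should follow from strict convexity and $C^2$ regularity. At such a point we want a thin vertical cylinder touching $\Om$ from outside, i.e.\ a cylinder over a small disc in the complement of the projection of $\Om$, tangent to the boundary of that projection. The projection $\pi(\Om)$ is convex, and its boundary is the image of the horizontal-normal curve. I expect the main technical obstacle here: showing that this boundary has an exterior touching disc at each point. Convexity gives an exterior supporting line, which is a degenerate cylinder (a tangent plane); if the cited theorem allows a half-space, that suffices. Otherwise one uses that $\pi(\pa\Om)$ is $C^{1,1}$-like thanks to the $C^2$ regularity of $\pa\Om$.

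Theorem \ref{thm:Li} then gives symmetry about $\{x_3=\lm_0\}$. Set $D=\{x':(x',\lm_0)\in\Om\}$, which is strictly convex as a section of a strictly convex set. By convexity and symmetry, each vertical line over $x'\in D$ meets $\Om$ in an interval $|x_3-\lm_0|<f(x')$. Here $f$ is concave and positive in $D$, $f=0$ on $\pa D$, and $f$ is continuous on $\overline D$. Moreover $f\in C^2(D)$ by the implicit function theorem, since at points over the open set $D$ the upper boundary has non-horizontal normal: a horizontal normal there would, by convexity, force the whole vertical segment into $\pa\Om$, contradicting strict convexity.
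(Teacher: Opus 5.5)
Your proposal is correct and follows the paper's route, which is simply to rerun the proof of Theorem~\ref{thm:Cauchy.pb} with the steady vorticity identity in place of the evolution equation. You are in fact more careful than the paper: you treat $\alpha_0=0$ separately, and you get $u_3=0$ from the boundary condition at the top point rather than from a zero-barycenter-velocity hypothesis that Theorem~\ref{thm:global} does not assume.

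The step you flag as the main obstacle follows from convexity alone, with no need for $C^{1,1}$ regularity of $\pa\pi(\Om)$. If $(y,x_3)\in\pa\Om$ has horizontal normal $(\nu',0)$, then $\nu'$ is an outer supporting normal of $\pi(\Om)$ at $y$. For every $r>0$, the closed disc of radius $r$ centred at $y+r\nu'$ meets $\overline{\pi(\Om)}$ only at $y$. Hence the vertical cylinder over that disc satisfies $\overline{C_r}\cap\overline{\Om}\subset(\{y\}\times\R)\cap\overline{\Om}\subset\pa\Om$.
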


\begin{proof} 
Repeat the proof of Theorem \ref{thm:Cauchy.pb}. 
The only difference is that now $\Om$ and $u$ are of class $C^2$ 
and independent of time. 
\end{proof}

By item $(i)$ of Theorem \ref{thm:global}, the domain $\Om$ is symmetric with respect to 
the plane $\{ x_3 = \lm_0 \}$.  
Without loss of generality,  we assume that the barycenter of $\Omega$ is at the origin, 
and therefore $\lm_0 = 0$, that is, $\Omega$ is symmetric with respect to the plane $\{x_3=0\}$. 

The starting point of our analysis is following theorem about the torsion function. 

\begin{theorem}[An overdetermined problem for the torsion function]
\label{thm:4.1}
Let $(\Omega,u)$ satisfy the assumptions of Theorem \ref{thm:global}, 
and suppose that the barycenter of $\Omega$ is at the origin. 
Let $\lm_0, D, f$ be given by item $(i)$ of Theorem \ref{thm:global}, 
so that $\lm_0 = 0$, 
\begin{equation} \label{eq:omega.is.graph}
D = \{ x' \in \R^2 : (x',0) \in \Omega \}, 
\quad \ 
\pa \Omega \cap \{x_3>0\} = \{(x', f(x')) : x' \in D\}.
\end{equation}
Then the torsion function $v_D \in H^1_0(D)$ of the planar set $D$, 
that is, the solution of the problem 
\begin{equation} \label{eq:torsion} 
\begin{cases}
\Delta v_D = -1 & \text{in } D, \\
v_D=0 & \text{on } \pa D,
\end{cases}
\end{equation}
also satisfies the equation 
\begin{equation} \label{eq:over.determined}
\frac{\alpha_0^2}{2}  | \nabla v_D|^{2}  
- \frac{ \sigma_0 \beta }{ | \nabla v_D| }  
+ \sigma_0 H_{\pa D}
= c 
\quad \ \text{on } \pa D,
\end{equation}
where $H_{\pa D}$ is the curvature of the boundary $\pa D$ (which is a planar curve),
and $\beta$ and $c$ are real constants depending only on $D$.
\end{theorem}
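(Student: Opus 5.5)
The plan is to turn the information from item $(i)$ into a planar problem on $D$: first identify the velocity field through the torsion function $v_D$, then use Bernoulli's law to get the pressure, and finally read off \eqref{eq:over.determined} by evaluating the dynamic boundary condition on the equator $\Sigma=\pa D\times\{0\}$.

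\textbf{Step 1: the velocity field.} By item $(i)$ (Lemma \ref{lemma:proof.thm.1.7.i}), $u=(u',0)$ with $u'=u'(x')$, $\div' u'=0$ and $\pa_1u_2-\pa_2u_1=\a_0$ in $D$. Hence there is a stream function $\psi$ on $D$ with $u'=\grad^\perp\psi=(-\pa_2\psi,\pa_1\psi)$ and $\Delta\psi=\a_0$. Along the equator the outer normal of $\pa\Om$ is $(\nu_{\pa D},0)$, so $\la u,\nu_{\pa\Om}\ra=0$ gives $\pa_\tau\psi=0$ on $\pa D$. Thus $\psi$ is constant on the connected curve $\pa D$, and uniqueness for the Dirichlet problem gives
\[
\psi=-\a_0 v_D+k \quad\text{and}\quad u'=-\a_0\grad^\perp v_D .
\]
On the upper graph $\nu_{\pa\Om}\parallel(-\grad f,1)$, so the kinematic condition reads $\la\grad^\perp v_D,\grad f\ra=0$ in $D$. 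Therefore $f$ is constant on the level curves of $v_D$. By Theorem \ref{thm:torsion.square.root.concave}, $\sqrt{v_D}$ is concave, so $v_D$ has a unique maximum point and its level sets $\{v_D=t\}$ are closed convex curves. I would conclude that $f=F(v_D)$ for a one-variable function $F$ with $F(0)=0$.

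\textbf{Step 2: Bernoulli's law.} Since $u$ is independent of $x_3$ and has constant vorticity, we can write $\la u,\grad\ra u=\grad\frac{|u|^2}{2}-\a_0 (u')^{\perp}$, where $(u')^\perp=-\a_0\grad v_D$ up to sign conventions. The Euler equation then becomes a gradient identity, which gives
\[
p=c-\frac{\a_0^2}{2}|\grad v_D|^2-\a_0^2 v_D \quad\text{in }\Om ,
\]
with the sign of the $v_D$ term to be checked. On $\pa D$, where $v_D=0$, this yields $p=c-\frac{\a_0^2}{2}|\grad v_D|^2$.

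\textbf{Step 3: curvature at the equator.} At a point of $\Sigma$ the mean curvature of $\pa\Om$ splits as $H_{\pa D}+\kappa$, where $\kappa$ is the curvature of the vertical normal section. Since $\Om$ is symmetric and $C^2$, this section is locally the curve $x_3=\pm f$, i.e. a graph $s=\frac{x_3^2}{2}\kappa+o(x_3^2)$ in terms of the distance $s$ to $\pa D$. Near $\pa D$ we have $v_D\approx|\grad v_D|\,s$. Hence $F(t)^2\approx \frac{2t}{\kappa|\grad v_D|}$ as $t\to0^+$, and this must hold at every boundary point. Since $F$ is one single function, the limit $\lim_{t\to0^+}F(t)^2/t=:-2/\beta$ must exist and be independent of the point, which forces
\[
\kappa=-\frac{\beta}{|\grad v_D|}\quad\text{on }\pa D .
\]
Substituting this into $\s_0 H_{\pa\Om}=p$ on $\Sigma$ gives exactly \eqref{eq:over.determined}. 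The claim that $\beta$ and $c$ depend only on $D$ would follow afterwards, by expressing them through integral identities on $D$: integrate $\s_0H(F(v_D))=p$ over sublevel sets of $v_D$, and use \eqref{torsion.integral}, \eqref{torsion.gradient.integral} and Gauss–Bonnet \eqref{Gauss.Bonnet}.

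\textbf{Main obstacle.} The delicate step is Step 3, i.e. rigorously establishing the asymptotics of $F$ near $0$ from mere $C^2$ regularity of $\pa\Om$, where $|\grad f|\to\infty$. I would first parametrize $\pa\Om$ near $\Sigma$ as a graph $s=h(x_3,\theta)$ over the vertical direction, which is $C^2$ with $h(0)=\pa_{x_3}h(0)=0$ and $\pa^2_{x_3}h(0)=\kappa>0$ by strict convexity. I would then compare $F^{-1}(x_3)=v_D\big(h(x_3,\theta)\big)$ along two different normals. Taylor's formula at second order, together with $C^1$ regularity of $v_D$ up to $\pa D$, should yield that $\kappa|\grad v_D|$ takes the same value along all normals.
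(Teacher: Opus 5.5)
Your proposal is correct. Steps 1 and 2 are the paper's Lemmas \ref{lem:craig.sulem} and \ref{lemma:tilde.f}: your Bernoulli constant is the same $c$ as in \eqref{eq:first.reduction1}, and the $v_D$ term does come with the sign $-\a_0^2 v_D$. Step 3, however, is a genuinely different route.

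\emph{How the paper argues.} The paper never evaluates the boundary condition on the equator. It expands $H(\tilde f\circ v_D)$ in level-set coordinates, which gives \eqref{eq:on.level.set} involving $\tilde f''/(\tilde f')^3$ and the curvature $H_{\mE_s}$ of the level curves. It then:
\begin{itemize}
\item proves $\tilde f'(s)\to+\infty$, because the normal to $\pa\Om$ becomes horizontal;
\item shows that $\beta=\lim_{s\to0^+}\tilde f''/(\tilde f')^3$ exists, by solving \eqref{eq:on.level.set} for this ratio along a gradient-flow curve from an arbitrary $x_0\in\pa D$;
\item obtains \eqref{eq:over.determined} in the limit $s\to0^+$.
\end{itemize}

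\emph{How you argue.} You impose $\s_0H_{\pa\Om}=p$ directly on $\Sigma$ and split $H_{\pa\Om}=H_{\pa D}+\kappa$, as the trace of the second fundamental form in the orthonormal frame $\{\tau,e_3\}$. Along each meridian section you have $F^{-1}(y)=\frac12\kappa(x_0)|\grad v_D(x_0)|\,y^2+o(y^2)$. Since $F$ is a single function, this shows $\kappa|\grad v_D|$ is constant on $\pa D$. Because $(F^{-1})''=-F''/(F')^3$, your $\beta=-\kappa|\grad v_D|$ is exactly the paper's $\beta$.

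\emph{What each route buys.} Yours is more geometric: it identifies $-\beta$ as the meridian curvature at the equator times $|\grad v_D|$. It needs only $C^2$ regularity of $\pa\Om$ at $\Sigma$ and $v_D\in C^1(\overline D)$, and avoids both the $C^2$ regularity of $\tilde f$ and the level-curve computation. The paper's route yields, as a by-product, the limit $\tilde f'\to+\infty$, which it reuses in Lemma \ref{lem:constant.c}. You need that same fact to show $c$ depends only on $D$: the flux through $\{v_D=s\}$ must converge uniformly to $P(D)$. Note that you should integrate over the superlevel sets $\{v_D>s\}$, not sublevel sets. Once $c$ is known, $\beta$ can be read off from \eqref{eq:over.determined} at any single boundary point.

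\emph{Small repairs.}
\begin{itemize}
\item Strict convexity does not force $\kappa>0$. Work with $\lim_{y\to0^+}F^{-1}(y)/y^2=\kappa|\grad v_D|/2\ge0$ rather than $\lim F(t)^2/t=-2/\beta$, which presupposes $\beta\neq0$. The case $\beta=0$ is only excluded afterwards, via \eqref{formula.in.lem:beta}.
\item Writing $F^{-1}$ requires $F$ to be injective near $0$. You can get this from concavity of $f$ together with strict convexity of $\Om$, or, as in the paper, from $\tilde f'\to+\infty$.
\item Dividing by $|\grad v_D|$ on $\pa D$ uses the lower bound of Lemma \ref{lem:lower.bound}.
\end{itemize}
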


The proof of Theorem \ref{thm:4.1} is divided in two parts: 
in Subsection \ref{sub:equation} we prove that 
$v_D$ satisfies \eqref{eq:over.determined}, 
while in Subsection \ref{sub:constant} we compute the values 
of the constants $\beta$ and $c$ and show that they  
depend only on $D$.

\subsection{A geometric-analytic condition}\label{sub:equation}
The main goal of this subsection is to prove the first part of Theorem \ref{thm:4.1}. 
Let $\Omega, u, D, f$ be like in Theorem \ref{thm:4.1}.
We introduce the matrices
\begin{equation} \label{def.mJ.2.3}
\mJ_3 :=
\begin{pmatrix}
0 &-1& 0
\\
1&0&0
\\
0&0& 0
\end{pmatrix}, 
\quad \ 
\mJ_2 :=
\begin{pmatrix}
0 &-1 
\\
1&0
\end{pmatrix}.
\end{equation}
We decompose the vector field $u$ into its irrotational and rotation part, 
\begin{equation}\label{eq:ansatz}
u = v + \frac{\a_0}{2} \mJ_3 x, 
\quad \mJ_3 x = \begin{pmatrix} - x_2 \\ x_1 \\ 0 \end{pmatrix},
\end{equation}
where $v$ is defined as the difference $v := u - (\alpha_0/2) \mJ_3 x$.
Since $u$ is independent of $x_3$ and it has third component $u_3 = 0$,
also $v$ is independent of $x_3$ and $v_3 = 0$.  
Moreover, $\div(\mJ_3 x) = 0$ and $\curl(\mJ_3 x) = 2 e_3$, 
so that both $\div v$ and $\curl v$ vanish in $\Om$.
We define the planar vector fields 
\begin{gather}
\tilde u : D \to \R^2, \quad \ 
\tilde v : D \to \R^2, 
\notag \\
\tilde u(x') 
= \begin{pmatrix} \tilde u_1(x') \\  \tilde u_2(x') \end{pmatrix} 
= \begin{pmatrix} u_1(x',0) \\ u_2(x',0) \end{pmatrix}, 
\quad \ 
\tilde v(x') 
= \begin{pmatrix} \tilde v_1(x') \\  \tilde v_2(x') \end{pmatrix} 
= \begin{pmatrix} v_1(x',0) \\ v_2(x',0) \end{pmatrix}, 
\label{def.tilde.u.tilde.v}
\end{gather}
and we note that 
\[
\div_2 \tilde u = \div_2 \tilde v = 0, \quad \ 
\curl_2 \tilde u = \alpha_0, \quad \ 
\curl_2 \tilde v = 0 \quad \ 
\text{in } D,
\]
where $\div_2$ and $\curl_2$ are the $\R^2$-divergence and $\R^2$-curl, 
i.e.,
\[
\div_2 F := \pa_{x_1} F_1 + \pa_{x_2} F_2, \quad \  
\curl_2 F := \pa_{x_1} F_2 - \pa_{x_2} F_1  
\]
for any planar vector field $F \in C^1(D, \R^2)$. 
Since $D \subset \R^2$ is convex, and therefore simply connected,  
and $\curl_2 \tilde v = 0$ in $D$, 
there exists a scalar function $\phi: D \to \R$ such that 
\begin{equation} \label{def.phi}
\tilde v = \grad \phi 
\quad \ \text{in } D.
\end{equation}
The function $\phi$ is harmonic in $D$, 
because $\Delta \phi = \div \grad \phi = \div \tilde v = 0$ in $D$,  
and therefore there exists a function $g : D \to \R$ such that 
\begin{equation}\label{eq:stream}
\nabla g = \mJ_2 \nabla \phi \quad \ \text{in } D. 
\end{equation}
Both $\phi$ and $g$ are functions of two variables $(x_1, x_2) = x' \in D$, 
and they can be trivially extended to functions defined in $D \times \R$, 
and hence in $\Omega$. 
We also denote $\phi, g$ the extended functions, 
\[
\phi : \Omega \to \R, \quad 
\phi(x) = \phi(x', x_3) := \phi(x'), \quad 
g : \Omega \to \R, \quad  
g(x) = g(x', x_3) := g(x'). 
\]
Of course $\pa_{x_3} \phi = \pa_{x_3} g = 0$. 
Thus 
\[
v 
= \begin{pmatrix} v_1 \\ v_2 \\ 0 \end{pmatrix} 
= \grad \phi 
= \begin{pmatrix} \pa_{x_1} \phi \\ \pa_{x_2} \phi \\ 0 \end{pmatrix} 
\quad \text{in } \Omega, 
\]
and the decomposition \eqref{eq:ansatz} becomes
\begin{equation}\label{u.decomposition}
u = \grad \phi + \frac{\a_0}{2} \mJ_3 x 
\quad \text{in } \Omega,
\qquad  
\tilde u = \grad \phi + \frac{\alpha_0}{2} \mJ_2 x' 
\quad \text{in } D.
\end{equation}

The first step towards the proof of Theorem \ref{thm:4.1} 
is to deduce a system of equations for $f$ and $\phi$ on the planar set $D$. 
This is the content of the next lemma.

\begin{lemma} \label{lem:craig.sulem} 
Let $\Omega, u, D, f$ be like in Theorem \ref{thm:4.1}, 
and let $\tilde u, \tilde v, \phi,g$ be the functions defined in 
\eqref{def.tilde.u.tilde.v}, \eqref{def.phi}, \eqref{eq:stream}.  
Then
\begin{align} 
\frac12 |\tilde u|^2 + \alpha_0 g - \frac{\alpha_0^2}{4} |x'|^2 + \sigma_0  H(f) = c_1 
& \quad \ \text{in } D,
\label{eq:first.reduction.i}
\\
\langle \tilde u, \nabla f \rangle  
= \langle \nabla \phi  +\frac{\alpha_0}{2}\mJ_2 x', \nabla f\rangle=0  
& \quad \ \text{in } D,
\label{eq:first.reduction.ii}
\\
\la \tilde u, \nu_{\pa D} \ra 
= \la \nabla \phi +\frac{\alpha_0}{2} \mJ_2 x', \nu_{\pa D} \ra=0 
& \quad \ \text{on } \pa D,
\label{eq:first.reduction.iii}
\end{align}
for some constant $c_1\in \R$, 
where $H(f)$ is the mean curvature operator 
defined in \eqref{eq:meancurvature.operator}. 
\end{lemma}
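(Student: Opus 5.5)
Three facts are needed: a Bernoulli-type identity for constant vorticity, which gives \eqref{eq:first.reduction.i}; the kinematic condition on the upper graph, which gives \eqref{eq:first.reduction.ii}; and the same condition on the equator, which gives \eqref{eq:first.reduction.iii}.

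\emph{Bernoulli identity.} We use the vector identity
\[
\la u,\grad\ra u = \grad\big(\tfrac12|u|^2\big) - u\times \curl u = \grad\big(\tfrac12|u|^2\big) - \a_0\, u\times e_3 .
\]
Since $u_3=0$, we have $u\times e_3 = (u_2,-u_1,0) = -\mJ_3 u$. By \eqref{u.decomposition}, $u=\grad\phi+\frac{\a_0}{2}\mJ_3x$. Because $\mJ_3$ acts only on the first two components and $\mJ_3^2 x = -(x',0)$, we get
\[
\mJ_3 u = \mJ_3\grad\phi - \tfrac{\a_0}{2}(x',0).
\]
By \eqref{eq:stream} and the extension of $\phi,g$ independent of $x_3$, $\mJ_3\grad\phi=\grad g$. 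Also $(x',0)=\grad\big(\tfrac12|x'|^2\big)$. Hence $u\times e_3$ is a gradient:
\[
u\times e_3 = -\grad\Big(g-\tfrac{\a_0}{4}|x'|^2\Big).
\]
The first equation of \eqref{steady.rotating.solution} then becomes
\[
\grad\Big(\tfrac12|u|^2+\a_0 g-\tfrac{\a_0^2}{4}|x'|^2+p\Big)=0 \quad \text{in } \Om .
\]
Since $\Om$ is connected (it is convex), the quantity in parentheses equals a constant $c_1$ in $\Om$. By continuity up to the boundary, and using $p=\sigma_0 H_{\pa\Om}$, the identity
\[
\tfrac12|u|^2+\a_0 g-\tfrac{\a_0^2}{4}|x'|^2+\sigma_0H_{\pa\Om}=c_1
\]
holds on $\pa\Om$. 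We evaluate it at the point $(x',f(x'))$ for $x'\in D$. The velocity and $g$ do not depend on $x_3$, so $u(x',f(x'))=(\tilde u(x'),0)$. By \eqref{eq:meancurvature.operator}, $H_{\pa\Om}(x',f(x'))=H(f)(x')$, since $\Om$ is locally the subgraph of $f$ near the upper boundary. This yields \eqref{eq:first.reduction.i}.

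\emph{Boundary conditions.} On the upper graph the outer normal is proportional to $(-\grad f,1)$. The condition $\la u,\nu_{\pa\Om}\ra=0$ from \eqref{steady.rotating.solution}, together with $u_3=0$, gives $-\la\tilde u,\grad f\ra=0$ in $D$, which is \eqref{eq:first.reduction.ii}. On the equator $\Sigma=\pa D\times\{0\}$ the surface $\pa\Om$ is vertical, because $\Om$ is symmetric and $C^2$. There $\nu_{\pa\Om}=(\nu_{\pa D},0)$, so the same condition gives \eqref{eq:first.reduction.iii}.

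\emph{Main obstacle: regularity at $\pa D$.} Two points need care here.
\begin{itemize}
\item The normal on $\Sigma$ is horizontal. I would prove this from the symmetry plane and the $C^2$ regularity: the reflection maps $\nu$ to its reflection, so $\nu_3=0$ on $\Sigma$.
\item Since $u$ is only assumed continuous on $\overline\Om$, the condition $\la u,\nu_{\pa\Om}\ra=0$ must be understood pointwise on $\pa\Om$, and $\tilde u$ must extend continuously to $\overline D$. I would get this extension because $u$ is independent of $x_3$ and continuous on $\overline\Om$. The same remark justifies evaluating the Bernoulli identity at boundary points.
\end{itemize}
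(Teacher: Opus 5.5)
Your proof is correct and is essentially the paper's argument. The paper applies the planar identity $\langle F,\nabla\rangle F-\tfrac12\nabla|F|^2=(\curl_2 F)\,\mJ_2F$ to $\tilde u$ on the slice $D\times\{0\}$ and then moves $p$ to the upper graph using $\pa_3 p=0$, whereas you use the three-dimensional Lamb identity to get the Bernoulli constant on all of $\Om$ and evaluate it directly at $(x',f(x'))$; this is the same computation, and your two kinematic conditions, together with your remarks on the horizontal normal along $\pa D\times\{0\}$ and on continuity up to the boundary, match or fill in details the paper leaves implicit.
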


\begin{proof} 
For any planar vector field $F \in C^1(D,\R^2)$, one has
\[
\langle F,\nabla\rangle F - \frac{\nabla (|F|^2)}{2}
= \begin{pmatrix}
F_2 \pa_2 F_1 - F_2 \pa_1 F_2 \\
F_1 \pa_1 F_2 - F_1 \pa_2 F_1
\end{pmatrix}
= (\pa_1 F_2 - \pa_2 F_1)
\begin{pmatrix}
- F_2 \\
F_1
\end{pmatrix}
= (\curl_2 F) \mJ_2 F.
\]
Applying this identity to $\tilde u$, 
recalling that $\curl_2 \tilde u = \alpha_0$, 
and using \eqref{u.decomposition}, \eqref{eq:stream},
we get 
\[
\langle \tilde u,\nabla \rangle \tilde u 
- \frac{ \nabla (|\tilde u|^2) }{2} 
= \alpha_0  \mJ_2 \tilde u
= \alpha_0  \mJ_2 \Big(\nabla \phi + \frac{\alpha_0}{2} \mJ_2 x' \Big)
= \alpha_0 \nabla g - \frac{\alpha_0^2}{2} x' 
= \grad \Big( \alpha_0 g - \frac{\alpha_0^2}{4} |x'|^2 \Big).
\] 
Hence the first equation in \eqref{steady.rotating.solution} 
on $\Om \cap \{ x_3 = 0 \} = D \times \{ 0 \}$ gives 
\[
\grad \Big( \frac{ |\tilde u|^2 }{2} + \alpha_0 g - \frac{\alpha_0^2}{4} |x'|^2 + \tilde p \Big) = 0 
\quad \text{in } D,
\]
where $\tilde p : D \to \R$ is the function $\tilde p(x') := p(x',0)$. 
As a consequence, there exists a constant $c_1 \in \R$ such that
\[
\frac{|\tilde u|^2}{2} + \alpha_0 g - \frac{\alpha_0^2}{4} |x'|^2 + \tilde p = c_1
\quad \text{in } D.
\]
Since $u_3=0$, from the first equation in \eqref{steady.rotating.solution} 
it follows that the pressure $p$ is independent of $x_3$. 
This means that the pressure is constant along any vertical line, 
and, in particular, for any $x' \in D$, 
the value of the pressure at the point $(x',0) \in \Om \cap \{ x_3 = 0\}$ 
is equal to its value at the point $(x', f(x')) \in \pa \Om \cap \{ x_3 > 0 \}$. 
Thus, by the fourth equation in \eqref{steady.rotating.solution}, 
\[
\tilde p(x') = p(x',0) = p(x', f(x')) = \sigma_0 H_{\pa \Omega}(x', f(x')) 
= \sigma_0 H(f)(x'),
\]
where $H(f)$ is defined in \eqref{eq:meancurvature.operator}. 
This proves \eqref{eq:first.reduction.i}. 

The fifth equation in \eqref{steady.rotating.solution} 
is the orthogonality property $\la u, \nu_{\pa\Om}\ra=0$ on $\pa \Omega$. 
The exterior normal vector to the boundary $\pa \Omega$ 
at the point $(x',f(x')) \in \pa \Om$, with $x'\in D$, is 
\begin{equation} \label{normal.pa.Om.f}
\nu_{\pa \Om} (x',f(x'))=
\frac{1}{\sqrt{1+|\nabla f(x')|^2}} \begin{pmatrix} - \nabla f(x') \\ 1 \end{pmatrix}.
\end{equation}
Since $u$ is independent of $x_3$ and its third component is $u_3 = 0$, 
recalling the definition \eqref{def.tilde.u.tilde.v} of $\tilde u$, one has 
\[
u(x', f(x')) = u(x',0) = \begin{pmatrix} \tilde u(x') \\ 0 \end{pmatrix}.
\]
Hence the identity $\la u, \nu_{\pa \Omega} \ra = 0$ on $\pa \Omega$ 
implies that $\la \tilde u, \grad f \ra = 0$ in $D$. 
Moreover $\tilde u$ is given by the second identity in \eqref{u.decomposition}, 
and this gives \eqref{eq:first.reduction.ii}.


Identity \eqref{eq:first.reduction.iii} is obtained from the fifth identity 
of \eqref{steady.rotating.solution} on $\pa \Omega \cap \{x_3=0\} = \pa D \times \{0\}$, 
observing that, since $\Omega$ is symmetric and smooth, one has
\begin{equation} \label{normal.at.pa.D}
\nu_{\pa \Omega}(x',0)=
\begin{pmatrix} 
\nu_{\pa D}(x')
\\
0
\end{pmatrix} \,\,\, \text{for } (x',0) \in \pa \Omega \cap \{x_3=0\}=\pa D \times \{0\},
\end{equation}
and using \eqref{u.decomposition}, as above, to replace $\tilde u$. 
\end{proof}

From now on, if there is no risk of ambiguity, we write $x$ instead of $x'$ when referring to points in $D\subset\R^2$.

\begin{remark}[\emph{The axisymmetric case}] 
\label{rem:lopez}
If $D$ is the disk $B_\rho$ of center $0$ and radius $\rho>0$, 
system \eqref{eq:first.reduction.i}, \eqref{eq:first.reduction.ii}, \eqref{eq:first.reduction.iii}
becomes easy to solve. Indeed, \eqref{eq:first.reduction.iii} becomes
\[
\la \nabla \phi , \nu_{\pa B}\ra = 0 
\quad \text{on } \pa B_\rho,
\]
because $\nu_{\pa B_\rho}(x) = x/\rho$ 
and $\la \mJ_2 x, x \ra = 0$. 
Hence $\phi$ is harmonic in the disk $B_\rho$ with zero Neumann boundary data, 
and therefore $\phi$ is constant. 
Thus $\grad \phi = 0$ in $B_\rho$, 
and $\grad g = \mJ_2 \grad \phi = 0$, 
whence $g$ is also a constant, $g = g_0 \in \R$. 
Since $\grad \phi = 0$, from \eqref{u.decomposition} we get 
$\tilde u = (\alpha_0/2) \mJ_2 x$ in $B_\rho$, 
whence $|\tilde u|^2 = (\alpha_0^2 / 4) |x|^2$, 
and \eqref{eq:first.reduction.i} becomes 
\begin{equation}\label{eq:exact0}
-\frac{\alpha_0^2}{8}|x|^2 +\sigma_0 H(f)=c_1-\alpha_0 g_0 
\quad \text{in } B_\rho. 
\end{equation}
Since $\grad \phi = 0$, 
\eqref{eq:first.reduction.ii} gives $\la \nabla f, \mJ_2 x \ra = 0$ in $B_\rho$, 
which implies that $f$ is a radial function. 

Since $f$ is radial and $D = B_\rho$ is a disk, 
equation \eqref{eq:exact0} can be easily transformed into an \textsc{ode}, 
which is the one studied by Lopez in \cite{Lo}, Section 4. 
For sake of completeness, we now compute the exact solution in this case.
We denote $r=|x|$.
Since $f$ is radial, we have 
$f(x) = f_0(r)$, where $f_0(s) := f(s,0)$, and 
\[
\nabla f(x) = f_0'(r) \frac{x}{r}, \quad \ 
\frac{\nabla f(x)}{\sqrt{1+|\nabla f(x)|^2}}
= \psi(r) x, \quad \ 
\psi(r) := \frac{f_0'(r)}{r \sqrt{1+f_0'(r)^2}}.
\]
Since $\div (\psi(r) x) = r \psi'(r) + 2 \psi(r)$, 
recalling the definition \eqref{eq:meancurvature.operator} of $H(f)$, 
\eqref{eq:exact0} becomes
\[
r \psi'(r) + 2 \psi(r) + \frac{\alpha_0^2}{8 \sigma_0} r^2 + \frac{c_1-\alpha_0 g_0}{\sigma_0} = 0.
\]
This is a linear \textsc{ode}. Its general solution in the interval $r \in (0,\infty)$ is 
\[
\psi(r) = \frac{\gamma_1}{r^2} - \frac{\alpha_0^2}{32 \sigma_0} r^2 - \frac {c_1 -\alpha_0 g_0}{2 \sigma_0}, 
\quad \ \gamma_1 \in \R.
\]
Recalling the definition of $\psi(r)$, and multiplying by $r$, we have
\begin{equation}\label{eq:ODE}
\frac{f_0'(r)}{\sqrt{1+f_0'(r)^2}} 
= r \psi(r) 
= \frac{\gamma_1}{r} - \frac{\alpha_0^2}{32 \sigma_0} r^3 - \frac{c_1 -\alpha_0 g_0}{2 \sigma_0} r.
\end{equation}
Since the quantity on the left is bounded, we find that $\gamma_1=0$. 
This implies that $f_0'(0)=0$.
Since the tangent plane to $\pa \Omega$ is vertical at points on 
$\pa \Omega \cap \{ x_3 = 0 \} = \pa B_\rho \times \{ 0 \}$, 
we have to impose that $\lim_{r \to \rho^-} f_0'(r) = - \infty$. 
This holds if the quantity on the right in \eqref{eq:ODE} is $-1$ at $r=\rho$, i.e., 
\[
\frac{ c_1 - \alpha_0 g_0 }{ 2\sigma_0 } 
= \frac{1}{\rho} - \frac{\a_0^2 \rho^2}{32 \sigma_0}.
\]
Thus, 
\begin{equation}\label{eq:ODE.bis}
\frac{f_0'(r)}{\sqrt{1+f_0'(r)^2}} 
= - \frac{\alpha_0^2}{32 \sigma_0} r^3 - \Big( \frac{1}{\rho} - \frac{\a_0^2 \rho^2}{32 \sigma_0} \Big) r.
\end{equation}
One has to invert the formula for $f_0'(r)$, 
and the solution $f_0(r)$ can be found by quadrature 
after imposing the boundary condition $f_0(\rho)=0$. 
\end{remark}

\begin{remark}[\emph{A threshold for the shape of monotone axisymmetrix profiles}] 
From formula \eqref{eq:ODE.bis} we note that $f_0'(r)$ is negative for all $r \in (0,\rho)$ 
if the coefficient of $r$ is $\leq 0$, that is, if 
\[
\frac{\a_0^2 \rho^3}{32 \sigma_0} \leq 1.
\]
This is a threshold for the shape of an axisymmetric rotating capillary drop: 
when the radius $\rho$ becomes too large, 
or the vorticity $\alpha_0$ too strong, 
or the capillarity $\sigma_0$ too weak, 
then the profile function $f_0(r)$ is no longer a strictly monotone decreasing function 
of the distance $r$ from the axis of symmetry. This means that the fluid domain $\Omega$ 
ceases to be a convex set. This is a well-known phenomenon in physics literature. 
\end{remark}

After the last remarks, now we go back to the general case.
When $D$ is not a disk, we also have to deal with the irrotational part $\nabla \phi$ of the velocity field. 
This makes the computations heavier.  
However, when the set $D$ is convex, the level sets of the torsion function of $D$ 
play a role analogous to that of polar coordinates in Remark \ref{rem:lopez}, 
as we show is the following Lemma. 

\begin{lemma} \label{lemma:tilde.f}
Let $\Omega, u, D, f$ be like in Theorem \ref{thm:4.1}, 
and let $v_D$ be the torsion function of $D$. Then there exist
a constant $c \in \R$ and a function
\[
\tilde f : [0,M] \to [0, \infty), \quad \  
M := \max_D v_D = \| v_D \|_{L^\infty(D)},
\] 
with $\tilde f \in C^{2}((0,M)) \cap C([0,M])$, 
$\tilde f(0)=0$,
such that $f = \tilde f \circ v_D$ in $\overline D$, and
\begin{equation}\label{eq:first.reduction1}
\frac{\alpha_0^2}{2} |\nabla v_D|^2 
+ \alpha_0^2 v_D 
- \sigma_0  \div \bigg( \frac{\tilde f'(v_D) \nabla v_D }{\sqrt{ 1 + | \tilde f' (v_D) \nabla v_D|^2}} \bigg) 
= c
 \quad\text{in } \, D.
\end{equation}
\end{lemma}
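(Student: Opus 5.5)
The plan is to work with the stream function of the planar field $\tilde u$ rather than with $\phi$ and $g$ separately. The two main moves are to identify this stream function with $-\a_0 v_D$, and then to use the transport condition \eqref{eq:first.reduction.ii} to show that $f$ is constant on the level curves of $v_D$.

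\emph{Step 1: the stream function equals $-\a_0 v_D$.} Since $\div_2 \tilde u = 0$ in the simply connected set $D$, there is $\psi$ with $\tilde u = \mJ_2 \grad \psi$. Then $\curl_2 \tilde u = \Delta \psi$, so $\Delta \psi = \a_0$ in $D$. By \eqref{eq:first.reduction.iii}, $\la \mJ_2\grad\psi,\nu_{\pa D}\ra = 0$ on $\pa D$, i.e.\ the tangential derivative of $\psi$ vanishes on $\pa D$. Since $\pa D$ is a single closed curve, $\psi$ is constant there, and we normalize it to $0$. Uniqueness for the Dirichlet problem gives $\psi = -\a_0 v_D$, hence
\[
\tilde u = -\a_0 \mJ_2 \grad v_D, \qquad |\tilde u|^2 = \a_0^2 |\grad v_D|^2 .
\]
Comparing $\mJ_2\grad\psi = \grad\phi + \frac{\a_0}{2}\mJ_2 x'$ with \eqref{eq:stream}, and applying $\mJ_2^{-1} = -\mJ_2$, gives $\grad\psi = -\grad g + \frac{\a_0}{2} x'$. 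Therefore
\[
\a_0 g - \frac{\a_0^2}{4}|x'|^2 = \a_0^2 v_D + \text{const}.
\]
Inserting both identities into \eqref{eq:first.reduction.i} yields
\[
\frac{\a_0^2}{2}|\grad v_D|^2 + \a_0^2 v_D + \s_0 H(f) = c \quad \text{in } D .
\]
This step uses $C^2$ regularity of $u$ up to the relevant sets; continuity of $\tilde u$ on $\overline D$ suffices for the boundary argument.

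\emph{Step 2: $f$ is a function of $v_D$.} By \eqref{eq:first.reduction.ii} and $\a_0 \neq 0$, we have $\la \mJ_2 \grad v_D, \grad f\ra = 0$. So $\grad f$ is parallel to $\grad v_D$ wherever $\grad v_D \neq 0$, and $f$ is constant along each connected component of a regular level set of $v_D$. To conclude, I need the level set structure of $v_D$. By Theorem \ref{thm:torsion.square.root.concave}, $\sqrt{v_D}$ is concave, so the superlevel sets $\{v_D > t\}$ are convex. A critical point of $v_D$ in $D$ is a critical point of the concave function $\sqrt{v_D}$, hence a maximum point. The set of maximum points is convex, and it cannot contain a segment: $v_D$ is real analytic, so by analytic continuation it would then be constant on a whole line segment extending across $D$ up to $\pa D$, contradicting $v_D=0$ on $\pa D$ and $v_D = M > 0$. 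Thus $v_D$ has a unique critical point $x_0$, with $v_D(x_0) = M$. For each $t \in (0,M)$ the level set $\{v_D = t\}$ is the boundary of a convex open set, hence a single closed $C^2$ curve, and it is regular.

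We then define $\tilde f(t)$ as the common value of $f$ on $\{v_D = t\}$, with $\tilde f(0) = 0$ (since $f = 0$ on $\pa D$) and $\tilde f(M) = f(x_0)$. Regularity follows by restricting to an integral curve $\gamma$ of $\grad v_D/|\grad v_D|^2$, along which $v_D(\gamma(t)) = t$. Then $\tilde f = f \circ \gamma$ is $C^2$ on $(0,M)$, and continuity on $[0,M]$ follows from continuity of $f$ on $\overline D$. Finally, $\grad f = \tilde f'(v_D)\grad v_D$ away from $x_0$. Substituting into \eqref{eq:meancurvature.operator} turns the identity of Step 1 into \eqref{eq:first.reduction1} on $D\setminus\{x_0\}$. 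The identity extends to all of $D$ by continuity, because $H(f)$ is continuous in $D$.

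The main obstacle I expect is Step 2's topological input: uniqueness of the critical point and connectedness of the level curves. I would rely on the $1/2$-concavity together with analyticity of $v_D$, as above. A secondary point is the precise regularity of $\tilde f$ near $t=0$, where $f$ has unbounded gradient; this is why only $C([0,M])$ is claimed there.
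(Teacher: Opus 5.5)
Your proof is correct and follows essentially the paper's route. Your stream function $\psi$ is, up to an additive constant, the paper's $-g_1 = -(g - \frac{\a_0}{4}|x'|^2)$, and it is identified with $-\a_0 v_D$ in the same way (tangential boundary condition plus Dirichlet uniqueness), after which $f$ is shown to be constant on the level curves of $v_D$ via \eqref{eq:first.reduction.ii} and the $1/2$-concavity of $v_D$. Your analyticity argument for the uniqueness of the critical point and your integral-curve argument for the regularity of $\tilde f$ supply details that the paper only asserts.
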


\begin{proof}
In $D$, let 
\begin{equation}\label{g1.def}
g_1 := g - \frac{\alpha_0}{4} |x|^2.
\end{equation}
Recalling \eqref{eq:stream} and \eqref{u.decomposition}, 
\begin{equation} \label{grad.g1.J2.tilde.u}
\nabla g_1 
= \nabla g -\frac{\alpha_0}{2} x
= \mJ_2 \big( \nabla \phi +\frac{\alpha_0}{2} \mJ_2 x \big)
= \mJ_2 \tilde u 
\quad \ \text{in } D.
\end{equation}
By the orthogonality property \eqref{eq:first.reduction.iii}, 
\[
\la \nabla g_1, \mJ_2 \nu_{\pa D} \ra 
= \la \mJ_2 \tilde u, \mJ_2 \nu_{\pa D} \ra
= \la \tilde u , \nu_{\pa D} \ra 
= 0 
\quad \ \text{on } \pa D.
\]
Since $\mJ_2 \nu_{\pa D}$ is tangent to $\pa D$, the last orthogonality property 
implies that $g_1$ is constant along the boundary $\pa D$, 
that is, there exists a constant $k \in \R$ such that $g_1=k$ on $\pa D$.
Thus, since $g$ is harmonic in $D$, the function $g_1$ solves the Dirichlet problem
\[
\begin{cases}
\Delta g_1 = - \alpha_0 & \text{in } D, \\
g_1 = k & \text{on } \pa D.
\end{cases}
\]
Hence $g_1$ is a translation of a multiple of the torsion function of $D$. 
In fact, the function $g_2 := (g_1 - k)/\alpha_0$ satisfies 
$\Delta g_2 = -1$ in $D$ and $g_2 = 0$ on $\pa D$, 
namely $g_2$ solves \eqref{eq:torsion}, 
and therefore it is the torsion function $v_D$ of the set $D$. 
Thus,  
\begin{equation} \label{g.1.v.D}
v_D = \frac{g_1-k }{\alpha_0}, \quad \ 
g_1 = k + \alpha_0 v_D.
\end{equation}
Since $D$ is convex, by Theorem \ref{thm:torsion.square.root.concave}, $\sqrt {v_D}$ is a concave function. 
Moreover, the superlevel sets $\{v_D\geq s\}$ are convex for any $s\in [0, M]$, 
$v_D$ has a unique critical point $x_* \in D$, which is the maximum point 
(i.e., $v_D(x_*) = M$), and the level sets of $v_D$ are smooth closed curves.  
Hence the vectors $\grad g_1 = \alpha_0 \grad v_D$ and $\mJ_2 \grad g_1 = \alpha_0 \mJ_2 \grad v_D$, 
except at the point $x_*$, are nonzero orthogonal vectors.
For any $s \in [0,M)$, consider the level set
\begin{equation} \label{def.mE.s}
\mE_s := \{ x \in D : v_D(x) = s \}
= \{ x \in D : g_1(x) = k + \alpha_0 s \}.
\end{equation}
For any point $x \in \mE_s$, 
the vector $\grad g_1(x)$ is normal to $\mE_s$ at $x$, 
and therefore $\mJ_2 \grad g_1(x)$ is tangent to $\mE_s$ at $x$. 
By \eqref{grad.g1.J2.tilde.u}, $\mJ_2 \grad g_1 = \mJ_2^2 \tilde u = - \tilde u$, 
and, by \eqref{eq:first.reduction.ii},
\[
\la \grad f , \mJ_2 \grad g_1 \ra 
= - \la \grad f , \tilde u \ra 
= 0 
\quad \ \text{in } D.
\]
Since $\mJ_2 \grad g_1$ is tangent to $\mE_s$, 
the last orthogonality property implies that $f$ is constant along the curve $\mE_s$. 
Denote $\tilde f(s)$ the value of $f$ on $\mE_s$, that is, $f(x) = \tilde f(s)$ for all $x \in \mE_s$. 
Then $f(x) = \tilde f(s) = \tilde f(v_D(x))$ for all $x \in \mE_s$, for all $s \in [0,M)$.  
Also, $f(x_*) = \tilde f(M)$. 
This proves that $f = \tilde f \circ v_D$ in the closure of $D$. 
The regularity of $\tilde f$ comes from that of $f$ 
and from the properties of the torsion function $v_D$ already recalled. 
One has $\tilde f(0) = 0$ because $f=0$ on the boundary $\pa D$, 
which is the level set $\mE_0$.

Now we write identity \eqref{eq:first.reduction.i} in terms of $\tilde f$ and $v_D$. 
First, by \eqref{grad.g1.J2.tilde.u} and \eqref{g.1.v.D}, one has 
$\mJ_2 \tilde u = \nabla g_1 = \alpha_0 \grad v_D$, whence 
$|\tilde u|^2 = \alpha_0^2 |\grad v_D|^2$. 
Second, by \eqref{g1.def} and \eqref{g.1.v.D}, one has 
$g = g_1 + \frac{\alpha_0}{4} |x|^2 
= k + \alpha_0 v_D + \frac{\alpha_0}{4} |x|^2$. 
Third, to write $H(f)$, we use the chain rule $\grad f = \tilde f'(v_D) \grad v_D$. 
Plugging these identities into \eqref{eq:first.reduction.i} gives \eqref{eq:first.reduction1}
with $c = c_1 - \alpha_0 k$.
\end{proof}

Now we show that the gradient of the torsion function 
on the boundary of a $C^2$ set
admits a lower bound. 
The proof is rather standard and it is based on the maximum principle.

\begin{lemma}\label{lem:lower.bound}
Let $D\subset \R^2$ be a bounded open set of class $C^{2}$, 
and let $v_D$ be its torsion function.
Then $v_D\in C^1(\overline D)$ and there exist two constants $C_0, C_1 \in \R$ such that 
\[
0 < C_0 \leq \inf_{\pa D} |\nabla v_D| 
\leq \sup_{\pa D} |\grad v_D| \leq C_1 < \infty. 
\]
\end{lemma}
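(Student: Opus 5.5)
The plan has three parts: regularity up to the boundary, a lower bound on $|\nabla v_D|$ from the Hopf lemma applied to an interior ball, and an upper bound from an exterior-ball barrier. The $C^2$ regularity of $D$ gives a uniform interior and exterior ball condition: there is $r>0$ such that for every $x_0\in\pa D$ there are disks $B_r(y_0)\subset D$ and $B_r(z_0)\subset \R^2\setminus \overline D$, both tangent to $\pa D$ at $x_0$.

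\emph{Regularity.} Since $\pa D\in C^2$ and the data are smooth, Schauder-type boundary estimates (or $W^{2,p}$ estimates for every $p<\infty$, followed by Sobolev embedding) give $v_D\in C^{1,\gamma}(\overline D)$. In particular $v_D\in C^1(\overline D)$, and $|\nabla v_D|$ is continuous on the compact set $\pa D$, so $\sup_{\pa D}|\nabla v_D|=:C_1<\infty$.

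\emph{Lower bound.} This is the main point. Fix $x_0\in\pa D$ and the interior disk $B_r(y_0)\subset D$ tangent at $x_0$. Compare $v_D$ with the torsion function of that disk,
\[
w(x)=\tfrac14\big(r^2-|x-y_0|^2\big).
\]
We have $\Delta(v_D-w)=0$ in $B_r(y_0)$, and on $\pa B_r(y_0)$ we have $v_D\ge0=w$. By the maximum principle, $v_D\ge w$ in $B_r(y_0)$. Both functions vanish at $x_0$, so differentiating along the inner normal gives
\[
|\nabla v_D(x_0)|=-\frac{\pa v_D}{\pa\nu_{\pa D}}(x_0)\ \ge\ -\frac{\pa w}{\pa\nu}(x_0)=\frac r2 .
\]
Here we used \eqref{torsion.gradient}, which says $\nabla v_D$ points along $-\nu_{\pa D}$ because $v_D$ is constant on $\pa D$. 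Since $r$ is uniform in $x_0$, this gives $C_0=r/2>0$.

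\emph{Upper bound.} A quantitative version also follows by a barrier argument, though regularity already suffices. Use the exterior disk $B_r(z_0)$ tangent at $x_0$, and let $R$ be large enough that $D\subset B_R(z_0)$. Take the radial barrier on the annulus $B_R(z_0)\setminus B_r(z_0)$:
\[
W=\tfrac14\big(r^2-|x-z_0|^2\big)+A\log\big(|x-z_0|/r\big),
\]
with $A$ chosen so that $W\ge0$ on the annulus. Then $W$ is a supersolution, $\Delta W=-1$ with $W\ge v_D$ on $\pa D$, and $W(x_0)=0$. Hence $v_D\le W$ in $D$, and this bounds the normal derivative at $x_0$ by $|\nabla W(x_0)|$, a constant depending only on $r$ and $R$.

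The only delicate step is the regularity up to the boundary under merely $C^2$ assumptions. This is standard elliptic theory and I would simply cite it; for the lower bound, $C^1$ up to the boundary is all that is needed to make the normal derivative comparison rigorous.
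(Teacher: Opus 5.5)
Your proposal is correct and follows essentially the paper's argument: you get the upper bound from boundary regularity, and the lower bound $C_0=r/2$ by comparing $v_D$ on an interior tangent disk with that disk's explicit torsion function $\tfrac14(r^2-|x-y_0|^2)$. The differences are minor. You use the weak maximum principle plus a one-sided difference quotient along the inner normal, where the paper invokes Hopf's lemma for a strict inequality; your non-strict bound $\geq r/2$ is all that is needed. You also add an exterior-disk barrier for a quantitative upper bound, which is correct but not required.
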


\begin{proof}
The regularity of $v_D$ can be obtained by the classical regularity theory of elliptic operators, 
and the existence of the upper bound $C_1$ is a direct consequence of it. 
We prove the lower bound $C_0$.   

By the regularity and compactness of the boundary $\pa D$, 
there exists $r_0 > 0$ such that 
for every point $x_0 \in \pa D$ there exists a point $x_1 \in D$ such that 
the open disc $B_{r_0}(x_1)$ is contained in the open set $D$,  
the circle $\pa B_{r_0}(x_1)$ and the boundary $\pa D$ have one point in common, 
which is $x_0$, and no other common points, 
and $\pa B_{r_0}(x_1)$ is tangent to $\pa D$ at $x_0$. 

Let $x_0\in \pa D$, and let $B := B_{r_0}(x_1)$ be the tangent disc described above.  
The torsion function $v_B$ of the ball $B$ is explicit, and it is 
\[
v_B(x) = \frac{r_0^2 -|x-x_1|^2}{4}. 
\]
The difference $u := v_D-v_B$ satisfies
\[
\begin{cases}
\Delta u=0 & \text{in } B, 
\\
u \geq 0 & \text{on } \pa B, 
\end{cases}
\]
because $v_D \geq 0$ in $\overline D$, and, in particular, on $\pa B$, while $v_B = 0$ on $\pa B$. 
Also, $u(x_0) = 0$ because $x_0 \in \pa D \cap \pa B$. 
Since $u$ is harmonic in $B$, by the maximum principle we have 
\[
0 = u(x_0) = \min_{\pa B} u = \min_B u.
\]
Moreover $u$ is not constant in $B$ because $u>0$ on $(\pa B) \cap D = (\pa B) \setminus \{ x_0 \}$. 
Since $x_0$ is a minimum point for the harmonic function $u$, 
by Hopf Lemma we have
\[
\frac{\pa}{\pa \nu} u(x_0) < 0.
\] 
Since $\pa B$ is tangent to $\pa D$ from inside, 
the two curves have the same normal  
$\nu_{\pa D} (x_0) = \nu_{\pa B}(x_0)$ at $x_0$. Therefore
 \[
 \la \nabla v_D(x_0), \nu_{\pa D}(x_0)\ra
 - \la \nabla v_B(x_0), \nu_{\pa B}(x_0)
 = \la \nabla u(x_0), \nu_{\pa B}(x_0)\rangle <0.
 \]
By \eqref{torsion.gradient}, we have 
$\nabla v_D(x_0)= -|\nabla v_D(x_0)| \nu_{\pa D}(x_0)$ and 
$\nabla v_B(x_0)= -|\nabla v_B(x_0)| \nu_{\pa B}(x_0)$. 
Hence
\[
|\nabla v_D(x_0)| > |\nabla v_B (x_0)|= \frac{r_0}{2}.
\]
This proves the lower bound with $C_0 = r_0/2$, 
uniformly in $x_0 \in \pa D$. 
\end{proof}

We are now in position to prove the first part of 
Theorem \ref{thm:4.1}.
We compute
\begin{align*}
& \div \bigg( \frac{\tilde f'  (v_D) \nabla v_D }{\sqrt{ 1 + (\tilde f' (v_D))^2 |\nabla v_D|^2 }} \bigg)
\\ 
& = \frac{\tilde f'' (v_D) |\nabla v_D|^2 - \tilde f'(v_D) }{\sqrt{ 1 + (\tilde f'(v_D))^2 |\nabla v_D|^2}}
-\frac{ (\tilde f'(v_D))^2 \tilde f''(v_D) |\nabla v_D|^4 + \frac12 (\tilde f'(v_D))^3 
\langle \nabla ( |\nabla v_D|^2 ), \nabla v_D \rangle }
{ \{ 1 + (\tilde f'(v_D))^2 |\nabla v_D|^2 \}^\frac32}
\\
& = \frac{ \tilde f'' (v_D) |\nabla v_D|^2 - \tilde f' (v_D) \{ 1 + (\tilde f'(v_D))^2 |\nabla v_D|^2 \} 
- \frac12(\tilde f' (v_D))^3 \langle \nabla (|\nabla v_D|^2), \nabla v_D \rangle }
{ \{ 1 + (\tilde f'(v_D))^2 |\nabla v_D|^2 \}^\frac32}.
\end{align*}
As already observed above, 
by the maximum principle, the function $v_D$ is positive and by classic PDE results it is smooth. 
Moreover, the only critical point of $v_D$ is at the maximum of $v_D$, $v_D(x_*) = M$, 
since the torsion function of a convex domain is $1/2-$concave.
Therefore, every $s \in (0, M)$ is a regular value, i.e., 
for every $s$, on the level set $ \mE_s = \{v_D=s\}$ the gradient of $v_D$ does not vanish, 
the curve $\mE_s$ is smooth (by implicit function theorem) and connected, since $\sqrt {v_D}$ is concave. 
The curve $\mE_s$ defined in \eqref{def.mE.s} is a level set of the function $v_D$,
and therefore, by \eqref{curvatura.level.set}, 
its curvature is
\[
H_{\mE_s} = - \div \left(\frac{\nabla v_D}{|\nabla v_D|}\right)
= - \Big( \frac{\Delta v_D}{|\nabla v_D|} 
- \frac{\langle  \nabla (|\nabla v_D|), \nabla v_D\rangle}{|\nabla v_D|^2} \Big)
= \frac{|\nabla v_D| + \langle  \nabla (|\nabla v_D|), \nabla v_D\rangle}{|\nabla v_D|^2}.
\] 
Hence
\[
\frac12 \langle \nabla (|\nabla v_D|^2 ) , \nabla v_D\rangle 
= |\nabla v_D| \langle \nabla ( |\nabla v_D| ) , \nabla v_D\rangle 
= |\nabla v_D|^3 H_{\mE_s} - |\nabla v_D|^2,
\]
whence 
\[
\div \bigg( \frac{\tilde f'(v_D) \nabla v_D }{ \sqrt{ 1 + (\tilde f'(v_D))^2 |\nabla v_D|^2} } \bigg ) 
= \frac{ \tilde f''(v_D) |\nabla v_D|^2 - \tilde f'(v_D) - (\tilde f'(v_D))^3 |\nabla v_D|^3 H_{\mE_s} }
{ \{ 1 + (\tilde f'(v_D))^2 |\nabla v_D|^2 \}^\frac32 }.
\]
Thus identity \eqref{eq:first.reduction1} on the level set $\mE_s$, $s \in (0,M)$, 
becomes
\begin{equation} \label{eq:on.level.set}
\frac{\alpha_0^2}{2} |\nabla v_D|^2 
+ \alpha_0^2 s 
- \sigma_0 \frac{ \tilde f''(s) |\nabla v_D|^2 - \tilde f'(s) - (\tilde f'(s))^3 |\nabla v_D|^3 H_{\mE_s} }
{ \{ 1 + (\tilde f'(s))^2 |\nabla v_D|^2 \}^\frac32 }
= c \quad \text{on } \mE_s.
\end{equation} 

Now we prove some limit properties for the function $\tilde f$ close to the level $s=0$, 
related to properties of the profile function $f$ 
close to $\pa \Om \cap \{ x_3 = 0 \} = \pa D \times \{ 0 \}$. 

\begin{lemma}
The function $\tilde f$ obtained in Lemma \ref{lemma:tilde.f} satisfies 
\begin{equation} \label{lim.tilde.f.der.infty}
\lim_{s \to 0^+} \tilde f'(s) = \infty
\end{equation}
and 
\begin{equation}\label{eq:defn.beta}
\beta := \lim_{s \to 0^+} \frac{ \tilde f''(s) }{ (\tilde f'(s))^3 } \in \R.
\end{equation}
\end{lemma}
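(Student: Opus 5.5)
The plan is to translate the boundary behaviour of the profile $f$ near the equator $\pa D\times\{0\}$ into statements about $\tilde f$ near $s=0$. The tool is the identity $\grad f=\tilde f'(v_D)\grad v_D$ together with the two-sided bound $C_0\le|\grad v_D|\le C_1$ on $\pa D$ from Lemma \ref{lem:lower.bound}. By continuity of $\grad v_D$ up to $\pa D$ (Lemma \ref{lem:lower.bound} gives $v_D\in C^1(\overline D)$), this bound persists on the level sets $\mE_s$ for all small $s>0$.

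\emph{Proof of \eqref{lim.tilde.f.der.infty}.} By \eqref{normal.at.pa.D}, the normal to $\pa\Om$ at points of $\pa D\times\{0\}$ is horizontal. Since $\pa\Om$ is $C^2$, the upward normal \eqref{normal.pa.Om.f} has vertical component $(1+|\grad f|^2)^{-1/2}$, and this tends to $0$ as $x'\to\pa D$. Hence $|\grad f(x')|\to\infty$ as $x'\to \pa D$. Now take $x'\in\mE_s$ with $s\to0^+$. Then $x'\to\pa D$ uniformly, because $v_D$ is continuous and positive in $D$. Since $|\grad f|=|\tilde f'(s)|\,|\grad v_D|$ and $|\grad v_D|\le C_1+o(1)$, we get $|\tilde f'(s)|\to\infty$. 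For the sign: $f>0$ in $D$ and $\tilde f(0)=0$, and $f$ is concave (as in the proof of Theorem \ref{thm:Cauchy.pb}), so $f$ increases moving inward from the boundary along $\grad v_D$. Hence $\tilde f'(s)>0$ for small $s$, and \eqref{lim.tilde.f.der.infty} follows.

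\emph{Proof of \eqref{eq:defn.beta}.} Here I would use equation \eqref{eq:on.level.set} rather than geometry alone. Divide the numerator and denominator of the curvature term by $(\tilde f')^3$, writing $W=|\grad v_D|$. The term becomes
\[
\frac{\tilde f''(s)(\tilde f'(s))^{-3}W^2-(\tilde f'(s))^{-2}-W^3H_{\mE_s}}{\bigl((\tilde f'(s))^{-2}+W^2\bigr)^{3/2}}.
\]
On $\mE_s$, all other terms of \eqref{eq:on.level.set} are bounded as $s\to0^+$: $W$ is bounded above and below, $\a_0^2 s\to0$, and $H_{\mE_s}\to H_{\pa D}$ uniformly, because the level sets of $v_D$ converge in $C^2$ to $\pa D$. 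This last convergence needs $v_D\in C^2$ up to the boundary; if only $C^{1,\gamma}$ is available, I would instead use the explicit formula for $H_{\mE_s}$ in terms of $\grad v_D$ and interior regularity. Evaluating at one point $x'\in\mE_s$ and solving \eqref{eq:on.level.set} for $\tilde f''/(\tilde f')^3$ then gives
\[
\frac{\tilde f''(s)}{(\tilde f'(s))^3}=\frac{1}{W^2}\Big[\frac{(W^2+(\tilde f')^{-2})^{3/2}}{\s_0}\Big(\frac{\a_0^2}{2}W^2+\a_0^2s-c\Big)+(\tilde f')^{-2}+W^3H_{\mE_s}\Big].
\]
Take $s\to0^+$ with $x'=x'(s)\to x_0\in\pa D$ along a curve of steepest descent, for instance. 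The right-hand side converges to
\[
W_0\Big[\frac{W_0^2}{\s_0}\Big(\frac{\a_0^2}{2}W_0^2-c\Big)+H_{\pa D}(x_0)\Big],\qquad W_0=|\grad v_D(x_0)|.
\]
The left-hand side does not depend on $x'$, so the limit exists. It is finite and equals this value for every $x_0\in\pa D$. This also yields \eqref{eq:over.determined}, with $\beta$ as in \eqref{eq:defn.beta}.

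The main obstacle is justifying the boundary regularity. Specifically, one must show that $\grad v_D$ and $H_{\mE_s}$ converge uniformly to their boundary values, and that \eqref{eq:on.level.set} can legitimately be passed to the limit, so that the bracket has a limit independent of the chosen approach. For a $C^2$ domain, $v_D\in C^{1,\gamma}(\overline D)$ holds but $C^2$ up to the boundary may fail. I would first attempt to work on a $C^{2,\gamma}$ domain, which gives Schauder estimates. Failing that, I would pass to the limit in the weak (integrated over $\mE_s$) form of \eqref{eq:on.level.set}, using Gauss--Bonnet \eqref{Gauss.Bonnet} for the curvature term.
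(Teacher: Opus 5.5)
Your argument is the paper's. The horizontal normal \eqref{normal.at.pa.D} at the equator gives $|\grad f|\to\infty$, hence $|\tilde f'(s)|\to\infty$ because $|\grad v_D|$ is bounded. Then \eqref{eq:on.level.set} is solved for $\tilde f''/(\tilde f')^3$ at points of $\mE_s$ tending to $x_0\in\pa D$. The paper uses exactly the curve $\ph'=\grad v_D/|\grad v_D|^2$, $\ph(0)=x_0$, for which $v_D(\ph(s))=s$.

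Two small corrections.
\begin{itemize}
\item Your displayed limit has an algebra slip. Since $(W^2+(\tilde f')^{-2})^{3/2}/W^2\to W_0$, the limit is $W_0\big[\s_0^{-1}\big(\frac{\a_0^2}{2}W_0^2-c\big)+H_{\pa D}(x_0)\big]$. This is \eqref{beta.direct}, and it gives \eqref{eq:over.determined}; with your factor $W_0^2/\s_0$ it would not.
\item The sign of $\tilde f'$ is obtained more simply than through concavity. Once $|\tilde f'|>1$ on some $(0,\delta_1)$, $\tilde f'$ has constant sign there, and $\tilde f(0)=0<\tilde f(s)$ forces it to be positive.
\end{itemize}

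The step you flag as the main obstacle, $H_{\mE_s}(x'(s))\to H_{\pa D}(x_0)$, is simply asserted in the paper, so you are not missing an idea the paper supplies. Your remedies, however, are off target.
\begin{itemize}
\item You may not just assume $\pa D\in C^{2,\gamma}$. Nor can you borrow the paper's later bootstrap to $C^{2,\alpha}$, since that bootstrap uses \eqref{eq:over.determined}, i.e.\ this very lemma.
\item You also do not need $v_D\in C^2(\overline D)$. For small $s$, $\tilde f$ is strictly increasing. So near $x_0$ the curve $\mE_s$ coincides with $\{f=\tilde f(s)\}$, the projection of the horizontal slice $\pa\Om\cap\{x_3=\tilde f(s)\}$.
\end{itemize}

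Near $(x_0,0)$ the $C^2$ surface $\pa\Om$ has horizontal normal. Hence it is locally $\{(x_0+a\tau-h(a,b)\nu,\,b)\}$ with $h\in C^2$, where $\tau,\nu$ are the tangent and normal of $\pa D$ at $x_0$. The slices are the curves $a\mapsto x_0+a\tau-h(a,b)\nu$. Their curvature $h_{aa}(1+h_a^2)^{-3/2}$ is continuous in $(a,b)$, which gives the convergence at once. The limit $V(s)\to|\grad v_D(x_0)|$ only uses $v_D\in C^1(\overline D)$.

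There is also a legitimate route to $C^{2,\gamma}$. By \eqref{eq:first.reduction1}, $\s_0H_{\pa\Om}(x',x_3)=c-\frac{\a_0^2}{2}|\grad v_D(x')|^2-\a_0^2v_D(x')$ on $\pa\Om$. This is H\"older because $v_D\in C^{1,\gamma}(\overline D)$. Schauder theory for the mean curvature equation then upgrades $\pa\Om$, and hence $\pa D$, to $C^{2,\gamma}$.

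Your Gauss--Bonnet fallback would give existence of $\beta$. Dividing by $W^3$ and integrating over $\mE_s$ removes the curvature through $\int_{\mE_s}H_{\mE_s}\,d\s=2\pi$. It does not, by itself, give the pointwise identity \eqref{eq:over.determined}.
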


\begin{proof}
By Lemma \ref{lem:lower.bound}, $|\grad v_D|$ is bounded from below and above on $\pa D$, 
and therefore, by continuity, also on a neighborhood of $\pa D$ in $D$.
Consider a point $x_0 \in \pa D$. Let $\ph$ be the solution of the Cauchy problem 
\[
\ph'(t) = \frac{ (\grad v_D)(\ph(t)) }{ |(\grad v_D)(\ph(t))|^2 }, \quad \ 
\ph(0) = x_0
\]
in the interval $t \in [0, \delta]$, where $\delta > 0$ is sufficiently small 
to have $\ph(t) \in D$ for all $t \in (0, \delta]$. 
The composition $\psi(t) := v_D(\ph(t))$ satisfies 
$\psi(0) = v_D(x_0) = 0$ 
and $\psi'(t) = \la (\grad v_D)(\ph(t)) , \ph'(t) \ra = 1$. 
Hence $\psi(t) = t$ on $[0,\delta]$, namely  
\[
v_D(\ph(t)) = t \quad \ \forall t \in [0,\delta],
\]
so that $\ph(t)$ belongs to the level set $\mE_t$, for all $t \in [0, \delta]$.  
The point $(\ph(t), f(\ph(t)))$ on the boundary $\pa \Om$ 
converges, as $t \to 0$,
to the point $(\ph(0), f(\ph(0))) = (x_0, 0) \in \pa \Om$.
Hence the normal to the boundary $\pa \Om$ at the point $(\ph(t), f(\ph(t)))$,
which is given in \eqref{normal.pa.Om.f}, converges, as $t \to 0$, 
to the normal to $\pa \Om$ at the point $(x_0,0)$, which is given in \eqref{normal.at.pa.D}. 
In particular, the third component of these normal vectors satisfies 
\[
\lim_{t \to 0} \frac{1}{\sqrt{1 + |(\grad f)(\ph(t))|^2}} = 0.
\]
Hence $|(\grad f)(\ph(t))| \to \infty$ as $t \to 0$. 
Since $f = \tilde f \circ v_D$, one has $\grad f(x) = \tilde f'(v_D(x)) \grad v_D(x)$, 
and 
\[
(\grad f)(\ph(t)) = \tilde f'(v_D(\ph(t))) (\grad v_D)(\ph(t))  
= \tilde f'(t) (\grad v_D)(\ph(t)).
\]  
Since $|\grad v_D|$ is bounded, $|\tilde f'(t)| \to \infty$ as $t \to 0$. 
Hence $|\tilde f'| > 1$ in some interval $(0, \delta_1)$, 
and, since $\tilde f'$ is continuous, 
it cannot change sign in $(0, \delta_1)$.   
Moreover $\tilde f(0) = 0$, $\tilde f > 0$ on $(0, \delta_1)$, 
whence $\tilde f' > 0$ at a point, and therefore at all points of $(0, \delta_1)$.  
Hence $|\tilde f'| = \tilde f'$, which gives \eqref{lim.tilde.f.der.infty}.
Identity \eqref{eq:on.level.set} at the point $\ph(s)$, with $s \in (0, \delta]$, gives 
\[
\frac{ \tilde f''(s) }{ (\tilde f'(s))^3 } 
= \frac{ V(s) \mu(s) }{ \sigma_0 }
\Big\{ \frac{\alpha_0^2}{2} V^2(s) 
+ \alpha_0^2 s 
+ \frac{ \sigma_0 }{ \mu(s) (\tilde f'(s))^2 V^3(s) }
+ \frac{ \sigma_0 H_{\mE_s}(\ph(s)) }{ \mu(s) } 
- c \Big\},
\]
where 
\[
V(s) := |(\grad v_D)(\ph(s))|, \quad \  
\mu(s) := \Big( 1 + \frac{1}{(\tilde f'(s))^2 V^2(s)} \Big)^{\frac32}.
\]
For $s \to 0$, one has 
\[
\tilde f'(s) \to \infty, \quad \ 
V(s) \to |\grad v_D(x_0)|, \quad \  
\mu(s) \to 1, \quad \ 
H_{\mE_s}(\ph(s)) \to H_{\pa D}(x_0),
\]
whence 
\begin{equation} \label{beta.direct}
\lim_{s \to 0^+} \frac{ \tilde f''(s) }{ (\tilde f'(s))^3 } 
= \frac{ |\grad v_D(x_0)| }{ \sigma_0 }
\Big\{ \frac{\alpha_0^2}{2} |\grad v_D(x_0)|^2 
+ \sigma_0 H_{\pa D}(x_0) 
- c \Big\},
\end{equation}
which is a finite limit. 
\end{proof}

Taking the limit as $s \to 0^+$ in \eqref{eq:on.level.set}, 
or using directly formula \eqref{beta.direct} of the constant $\beta$,
we obtain \eqref{eq:over.determined}. 
To complete the proof of Theorem \ref{thm:4.1}, 
it remains to prove that the constants $\beta$ and $c$ depend only on the set $D$.

\subsection{The constants $\beta$ and $c$}\label{sub:constant}
The problem in \eqref{eq:over.determined} is related to the famous Serrin overdetermined problem. We now investigate further this link, by computing the values of the constants $\beta$ and $c$.
The definition of the constant $\beta$ in \eqref{eq:defn.beta} in terms of the profile function $f$ 
suggests that $\beta$ depends on the three-dimensional nature of the problem.
We show that this is not (completely) true, because,  
assuming that the drop $\Omega$ is smooth, the tangent plane to the boundary $\pa \Om$ 
at all points of $\pa \Om \cap \{ x_3 = 0 \}$ is vertical, 
the normal at those points is horizontal, 
and, thanks to this geometrical observation,  
we show that the knowledge of the planar set $D$ is sufficient to compute the value of $\beta$.
In the following lemma we compute the value of the constant $c$.
Recall that we are assuming that the barycenter of $\Omega$ is at the origin of $\R^3$, 
hence the origin of $\R^2$ is in $D$.
Also recall that $P(D)$ denotes the perimeter of the set $D$. 

\begin{lemma}\label{lem:constant.c}
The constant $c$ in \eqref{eq:first.reduction1} is uniquely determined by the formula
\begin{equation}\label{eq:constant.c}
c=\frac{1}{|D|} \left(\frac{3\alpha^2_0}{2} \int_{D} |\nabla v_D|^2\,dx +\sigma_0 P(D)  \right).
\end{equation}
\end{lemma}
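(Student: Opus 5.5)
Integrate identity \eqref{eq:first.reduction1} over $D$. Then $c|D|$ is the integral of the left-hand side, and the aim is to compute each of the three terms on the left in terms of $D$ alone. The first two are direct. $\int_D \frac{\alpha_0^2}{2}|\nabla v_D|^2\,dx$ is already in the desired form. For the second, identity \eqref{torsion.integral} gives $\int_D \alpha_0^2 v_D\,dx = \alpha_0^2 \int_D |\nabla v_D|^2\,dx$. Together these two terms contribute $\frac{3\alpha_0^2}{2}\int_D |\nabla v_D|^2\,dx$, which matches the first term of \eqref{eq:constant.c}.

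The curvature term should produce $\sigma_0 P(D)$. Set $X := \tilde f'(v_D)\nabla v_D/\sqrt{1+|\tilde f'(v_D)\nabla v_D|^2} = \nabla f/\sqrt{1+|\nabla f|^2}$. The natural step is the divergence theorem:
\[
-\sigma_0\int_D \div X\,dx = -\sigma_0 \int_{\pa D}\la X,\nu_{\pa D}\ra\, d\sigma .
\]
The field $X$ is singular at $\pa D$, so apply the theorem on the superlevel sets $D_s := \{v_D > s\}$ and let $s\to 0^+$. On $\mE_s = \pa D_s$ we have $\nu_{\pa D_s} = -\nabla v_D/|\nabla v_D|$. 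Hence
\[
\la X, \nu_{\pa D_s}\ra = -\frac{\tilde f'(s)|\nabla v_D|}{\sqrt{1+\tilde f'(s)^2|\nabla v_D|^2}} .
\]
By \eqref{lim.tilde.f.der.infty} and Lemma \ref{lem:lower.bound}, this tends to $-1$ uniformly as $s\to0^+$; the uniformity comes from $|\nabla v_D|\ge C_0/2>0$ near $\pa D$ together with $\tilde f'(s)\to+\infty$. Also the length of $\mE_s$ converges to $P(D)$, since the level curves converge in $C^1$ to $\pa D$ because $\nabla v_D\neq0$ near $\pa D$ and $v_D\in C^1(\overline D)$. Therefore
\[
-\sigma_0\int_{D_s}\div X\,dx \to \sigma_0 P(D).
\]
Geometrically this is the statement that the vertical component of the normal of $\pa\Omega$ vanishes along the equator.

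It remains to justify the limits. Since $|D\setminus D_s|\to0$, the integrals over $D_s$ of the bounded terms $\frac{\alpha_0^2}{2}|\nabla v_D|^2+\alpha_0^2 v_D$ and of the constant $c$ converge to the integrals over $D$. The divergence term then equals $c|D_s|$ minus these bounded integrals, so it converges as well, and the previous paragraph identifies its limit. Dividing by $|D|$ gives \eqref{eq:constant.c}, and the formula shows that $c$ is uniquely determined by $D$.

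I expect the main obstacle to be the boundary limit. It needs $\tilde f\in C^2((0,M))$ and the regularity of $v_D$ up to $\pa D$ so that the divergence theorem can be applied on $D_s$. Near the maximum point $x_*$ the field $X=\nabla f/\sqrt{1+|\nabla f|^2}$ is bounded and $f\in C^2(D)$, so there is no problem there. Near $\pa D$ one needs the $C^1$ convergence of $\mE_s$ to $\pa D$, which follows from the implicit function theorem since $\nabla v_D\neq0$ on $\pa D$.
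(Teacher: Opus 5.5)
Your proposal is correct and follows essentially the same route as the paper: integrate \eqref{eq:first.reduction1} over the superlevel sets $D_s$, apply the divergence theorem with outer normal $-\nabla v_D/|\nabla v_D|$, let $s\to 0^+$ using $\tilde f'(s)\to\infty$ and the lower bound on $|\nabla v_D|$ near $\partial D$, and conclude with $\int_D v_D\,dx=\int_D|\nabla v_D|^2\,dx$. Your extra justification that the lengths of the level curves $\mathcal{E}_s$ converge to $P(D)$ makes explicit a step the paper uses tacitly.
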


\begin{proof} 
For $s \in (0, M]$, we consider the superlevel set 
$D_s := \{ x \in D : v_D(x) > s \}$, 
and we integrate \eqref{eq:first.reduction1} over $D_s$,
\[
\frac{ \alpha_0^2 }{2} \int_{D_s} |\nabla v_D|^2 \, dx
+ \alpha_0^2 \int_{D_s} v_D \, dx
- \sigma_0 \int_{D_s} \div \bigg( \frac{\tilde f'(v_D) \nabla v_D}
{\sqrt{1 + |\tilde f'(v_D) \nabla v_D|^2}} \bigg) \,dx
= c|D_s|.
\]
The boundary $\pa D_s$ is the level set $\mE_s$ in \eqref{def.mE.s} of the torsion function $v_D$, 
the gradient $\grad v_D$ is orthogonal to the curve $\mE_s$, 
and the outward unit normal to $\mE_s$ is 
\[
\nu_{\pa D_s} = - \frac{\grad v_D}{|\grad v_D|} \quad \ \text{on } \pa D_s.
\]
From the divergence theorem, 
\begin{align*}
\int_{D_s} \div \bigg( \frac{\tilde f'(v_D) \nabla v_D}
{\sqrt{1 + |\tilde f'(v_D) \nabla v_D|^2}} \bigg) \,dx 
& = \int_{\pa D_s} \la \frac{\tilde f'(v_D) \nabla v_D}{\sqrt{1 + |\tilde f'(v_D) \nabla v_D|^2}} , 
\nu_{\pa D_s} \ra \, d\sigma 
\\ 
& = - \int_{\pa D_s} \frac{\tilde f'(s) |\nabla v_D|}{\sqrt{1 + |\tilde f'(s) \nabla v_D|^2}} \, d\sigma. 
\end{align*}
By \eqref{lim.tilde.f.der.infty} and Lemma \ref{lem:lower.bound}, 
the function in the last integral converges to 1 uniformly as $s \to 0^+$,
and therefore the integral converges to $\int_{\pa D} 1 \, d\sigma = P(D)$.  
Taking the limit as $s \to 0^+$, we obtain 
\[
\frac{ \alpha_0^2 }{2} \int_{D} |\nabla v_D|^2 \, dx
+ \alpha_0^2 \int_{D} v_D \, dx
+ \sigma_0 P(D) 
= c|D|.
\]
By \eqref{torsion.integral}, we obtain \eqref{eq:constant.c}.
\end{proof}

As observed in Lemma \ref{lem:constant.c}, the constant $c$ can be computed just from the knowledge of $D$.
At this point, we are also able to compute the value of the constant $\beta$, 
showing that also $\beta$ is given only in terms of quantities related to the planar set $D$.
This completes the proof of Theorem \ref{thm:4.1}. 

\begin{lemma}\label{lem:beta}
Assume the hypotheses of Theorem \ref{thm:4.1}. 
The constant $\beta$ defined in \eqref{eq:defn.beta} is given by 
\begin{equation} \label{formula.in.lem:beta}
\beta = - \Big( \sigma_0 \int_{\pa D} \frac{\la x,\nu_{\pa D}\ra}{|\nabla v_D|} \, d\sigma \Big)^{-1} 
\Big\{ \sigma_0 P(D) + \alpha_0^2 \int_D |\nabla v_D|^2 \, dx \Big\}.
\end{equation}
\end{lemma}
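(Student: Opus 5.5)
The plan is to eliminate the curvature term in the overdetermined condition \eqref{eq:over.determined} by testing it against the support function $\la x,\nu_{\pa D}\ra$ on $\pa D$. Three integral identities then turn every term except the one containing $\beta$ into interior quantities on $D$. After that I substitute the value of $c$ from Lemma \ref{lem:constant.c} and solve for $\beta$.

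\textbf{Step 1 (test the boundary equation).} Multiply \eqref{eq:over.determined} by $\la x,\nu_{\pa D}\ra$ and integrate over $\pa D$:
\[
\frac{\a_0^2}{2}\int_{\pa D}|\grad v_D|^2\la x,\nu_{\pa D}\ra\,d\s
-\s_0\beta\int_{\pa D}\frac{\la x,\nu_{\pa D}\ra}{|\grad v_D|}\,d\s
+\s_0\int_{\pa D}H_{\pa D}\la x,\nu_{\pa D}\ra\,d\s
= c\int_{\pa D}\la x,\nu_{\pa D}\ra\,d\s .
\]
This is legitimate because $|\grad v_D|$ is bounded away from zero on $\pa D$ by Lemma \ref{lem:lower.bound}.

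\textbf{Step 2 (evaluate the three known integrals).}
\begin{itemize}
\item By the divergence theorem in $\R^2$, $\int_{\pa D}\la x,\nu_{\pa D}\ra\,d\s=2|D|$.
\item By the tangential divergence theorem \eqref{eq:tang.divergence} with $X=x$, and since $\div_{\pa D}x=1$ on a curve, $\int_{\pa D}H_{\pa D}\la x,\nu_{\pa D}\ra\,d\s=P(D)$.
\item For the gradient term I use the Pohozaev (Rellich) identity for $\Delta v_D=-1$, $v_D=0$ on $\pa D$ in dimension $d=2$. I integrate $\div\big(\la x,\grad v_D\ra\grad v_D-\tfrac12|\grad v_D|^2x+v_D\,x\big)$ over $D$. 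Using $\grad v_D=-|\grad v_D|\nu_{\pa D}$ on $\pa D$ from \eqref{torsion.gradient}, this gives $\tfrac12\int_{\pa D}|\grad v_D|^2\la x,\nu_{\pa D}\ra\,d\s=2\int_D v_D\,dx$.
\end{itemize}
Combining the last identity with \eqref{torsion.integral}, the gradient term equals $4\int_D|\grad v_D|^2\,dx$.

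\textbf{Step 3 (solve for $\beta$).} Inserting these values and $c$ from \eqref{eq:constant.c}, so that $2c|D|=3\a_0^2\int_D|\grad v_D|^2+2\s_0P(D)$, Step 1 becomes
\[
2\a_0^2\int_D|\grad v_D|^2-\s_0\beta\int_{\pa D}\frac{\la x,\nu_{\pa D}\ra}{|\grad v_D|}\,d\s+\s_0P(D)=3\a_0^2\int_D|\grad v_D|^2+2\s_0P(D).
\]
Solving gives \eqref{formula.in.lem:beta}. The coefficient $\int_{\pa D}\la x,\nu_{\pa D}\ra/|\grad v_D|$ is strictly positive, so the division is allowed. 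Indeed, the origin lies in the convex set $D$, so $\la x,\nu_{\pa D}\ra>0$ on $\pa D$ by strict convexity.

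\textbf{Main obstacle.} The delicate point is justifying the Pohozaev identity, since it needs $v_D\in C^2(D)\cap C^1(\overline D)$ with enough integrability of second derivatives. I would first apply the divergence theorem on the superlevel sets $D_s$, whose boundaries $\mE_s$ are smooth with $\grad v_D\neq0$, and then let $s\to0^+$. This uses $v_D\in C^1(\overline D)$ from Lemma \ref{lem:lower.bound}, as in the proof of Lemma \ref{lem:constant.c}. The boundary term on $\mE_s$ also contains $v_D=s$, and it vanishes in the limit.
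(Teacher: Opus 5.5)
Your proof is correct and follows the paper's argument: test \eqref{eq:over.determined} against $\la x,\nu_{\pa D}\ra$, evaluate $\int_{\pa D}\la x,\nu_{\pa D}\ra\,d\sigma=2|D|$, $\int_{\pa D}H_{\pa D}\la x,\nu_{\pa D}\ra\,d\sigma=P(D)$ and $\int_{\pa D}|\nabla v_D|^2\la x,\nu_{\pa D}\ra\,d\sigma=4\int_D|\nabla v_D|^2\,dx$, then substitute $c$ from \eqref{eq:constant.c}. The only cosmetic difference is that you get the last identity from a single Pohozaev vector field, whereas the paper reaches \eqref{eq:boundary.interior} through a short chain of divergence-theorem identities involving $\la(\nabla^2 v_D)\nabla v_D,x\ra$ and $\la x,\nabla v_D\ra$.
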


\begin{proof} 
Multiplying \eqref{eq:over.determined} by $\la x,\nu_{\pa D}\ra$ 
and integrating along the curve $\pa D$, we get 
\begin{equation} \label{along.01}
\frac{\alpha_0^2}{2} \int_{\pa D} | \nabla v_D|^{2} \la x, \nu \ra \, d\sigma 
- \sigma_0 \beta \int_{\pa D} \frac{ \la x, \nu \ra }{ | \nabla v_D| } \, d\sigma
+ \sigma_0 \int_{\pa D} H_{\pa D} \la x, \nu \ra \, d\sigma
= c \int_{\pa D} \la x, \nu \ra \, d\sigma.
\end{equation}
We calculate each term. 
By the divergence theorem,
\begin{align*}
\int_{\pa D} |\nabla v_D|^2 \la x,\nu_{\pa D} \ra \, d\sigma
& = \int_D \div \{ x |\nabla v_D|^2 \} \, dx 
\\ 
& = \int_D (\div x) |\nabla v_D|^2\, dx 
+ \int_D \la x, \nabla ( |\nabla v_D|^2 ) \ra \, dx
\\ 
& = 2 \int_D |\nabla v_D |^2 \, dx 
+ 2 \int_D \la (\nabla^2 v_D) (\nabla v_D) , x \ra \, dx,
\end{align*}
where $\grad^2 v_D$ is the Hessian matrix of $v_D$. 
From the general identity
\[
\div \{ \la \grad w , x \ra \grad w \} 
= \la \grad w , x \ra \Delta w 
+ |\grad w|^2 
+ \la (\grad^2 w)(\grad w), x \ra
\]
applied to the torsion function $v_D$, using the identity $\Delta w_D = -1$, we find 
\[
\int_D \la (\nabla^2 v_D) (\nabla v_D), x\ra\,dx
= \int_D \div \{ \la \nabla v_D, x \ra \nabla v_D \} \, dx 
+ \int_D \la \nabla v_D, x \ra \, dx 
- \int_D |\nabla v_D|^2 \, dx.
\]
By the divergence theorem and formula \eqref{torsion.gradient} for the gradient of $v_D$,  
\[
\int_D \div \{ \la \nabla v_D, x \ra \nabla v_D \} \, dx 
= \int_{\pa D} \la \nabla v_D, x \ra \la \nabla v_D , \nu_{\pa D} \ra \, d\sigma
= \int_{\pa D} |\nabla v_D|^2 \la x,\nu_{\pa D}\ra \, d\sigma.
\]
By the divergence theorem applied to the vector field $x v_D$, 
which has divergence $\div(x v_D) = (\div x) v_D + \la x, \grad v_D \ra 
= 2 v_D + \la x, \grad v_D \ra$ in $D$ 
and boundary value $x v_D = 0$ on $\pa D$ by \eqref{eq:torsion},    
one has 
\[
0 = \int_{\pa D} \la x v_D , \nu_{\pa D} \ra \, d\sigma 
= \int_D \div(x v_D) \, dx 
= 2 \int_D v_D \, dx + \int_D \la x , \grad v_D \ra \, dx,
\]
and, by \eqref{torsion.integral},
\[
\int_D \la x , \grad v_D \ra \, dx 
= - 2 \int_D v_D \, dx 
= - 2 \int_D |\grad v_D|^2 \, dx.
\]
From these identities, we obtain  
\begin{equation}\label{eq:boundary.interior}
\int_{\pa D}|\nabla v_D|^2\la x,\nu_{\pa D}\ra \, d\sigma 
= 4 \int_{D}|\nabla v_D|^2\,dx.
\end{equation}
By \eqref{eq:tang.divergence} with $X(x)=x$, 
since $\div_{\pa D} x=1$, we get
\[
\int_{\pa D} H_{\pa D} \la x, \nu_{\pa D} \ra \, d\sigma = P(D), 
\]
and, from the divergence theorem, 
\[
\int_{\pa D} \la x, \nu_{\pa D} \ra \, d\sigma 
= \int_D \div x \, dx 
= \int_D 2 \, dx 
= 2 |D|.
\]
Hence \eqref{along.01} becomes 
\[
2 \alpha_0^2 \int_{D}|\nabla v_D|^2\,dx 
- \sigma_0 \beta \int_{\pa D} \frac{\la x, \nu_{\pa D} \ra }{ |\grad v_D| } \, d\sigma 
+ \sigma_0 P(D) = 2 c |D|.
\]
Replacing the constant $c$ with its formula in \eqref{eq:constant.c}, 
we get the result. 
\end{proof}

There is also another way to determine the constants $\beta$ and $c$. 
\begin{lemma}
Assume the hypotheses of Theorem \ref{thm:4.1}. 
Then the constants $\beta$ and $c$ satisfy 
\begin{align} 
\sigma_0 \beta \Big( \int_{\pa D} \frac{1}{|\nabla v_D|} \, d\sigma \Big) 
+ c P(D) 
& = \frac{\alpha_0^2}{2} \Big( \int_{\pa D} |\nabla v_D|^{2} \, d\sigma \Big) 
+ 2 \pi \sigma_0,
\label{eq:syst.beta.c.i} 
\\
\sigma_0 \beta P(D) 
+ c|D| 
& = \frac{\alpha_0^2}{2} \Big( \int_{\pa D} |\nabla v_D|^{3}\, d\sigma \Big) 
+ \sigma_0 \Big( \int_{\pa D} H_{\pa D} |\nabla v_D| \, d\sigma \Big).
\label{eq:syst.beta.c.ii}
\end{align}
Moreover, the solution of system \eqref{eq:syst.beta.c.i}, \eqref{eq:syst.beta.c.ii} 
is unique if and only if $D$ is not a disk.
\end{lemma}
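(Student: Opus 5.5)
Both identities come from integrating the boundary relation \eqref{eq:over.determined} against two natural weights along $\pa D$. For \eqref{eq:syst.beta.c.i}, I will divide \eqref{eq:over.determined} by nothing and integrate it over $\pa D$ with respect to $d\sigma$:
\[
\frac{\alpha_0^2}{2}\int_{\pa D}|\nabla v_D|^2\,d\sigma-\sigma_0\beta\int_{\pa D}\frac{d\sigma}{|\nabla v_D|}+\sigma_0\int_{\pa D}H_{\pa D}\,d\sigma=c\,P(D).
\]
The Gauss--Bonnet formula \eqref{Gauss.Bonnet} gives $\int_{\pa D}H_{\pa D}\,d\sigma=2\pi$, since $D$ is convex by item $(i)$ of Theorem \ref{thm:global}. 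Rearranging yields \eqref{eq:syst.beta.c.i}.

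For \eqref{eq:syst.beta.c.ii}, I will multiply \eqref{eq:over.determined} by $|\nabla v_D|$ and integrate over $\pa D$. Two terms simplify: $\sigma_0\beta\int_{\pa D}d\sigma=\sigma_0\beta P(D)$, and by \eqref{torsion.gradient.integral} $c\int_{\pa D}|\nabla v_D|\,d\sigma=c|D|$. This gives \eqref{eq:syst.beta.c.ii} directly. Lemma \ref{lem:lower.bound} guarantees that $|\nabla v_D|$ is bounded away from $0$ and $\infty$ on $\pa D$, so all integrals are finite.

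For uniqueness, I view \eqref{eq:syst.beta.c.i}--\eqref{eq:syst.beta.c.ii} as a linear $2\times2$ system in $(\sigma_0\beta,c)$ with determinant
\[
\Delta=|D|\int_{\pa D}\frac{d\sigma}{|\nabla v_D|}-P(D)^2.
\]
By Cauchy--Schwarz and \eqref{torsion.gradient.integral},
\[
P(D)^2=\Big(\int_{\pa D}|\nabla v_D|^{1/2}|\nabla v_D|^{-1/2}\,d\sigma\Big)^2\le\int_{\pa D}|\nabla v_D|\,d\sigma\int_{\pa D}\frac{d\sigma}{|\nabla v_D|}=|D|\int_{\pa D}\frac{d\sigma}{|\nabla v_D|},
\]
so $\Delta\ge0$. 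Equality holds if and only if $|\nabla v_D|^{1/2}$ and $|\nabla v_D|^{-1/2}$ are proportional, i.e. $|\nabla v_D|$ is constant on $\pa D$. Then Serrin's Theorem \ref{thm:serrin} (with $\mathrm f$ constant, which is nonincreasing) forces $D$ to be a disk. Conversely, on a disk $B_\rho$ one has $|\nabla v_D|\equiv\rho/2$ and $P^2=4\pi^2\rho^2=|D|\cdot 2\pi\rho\cdot(2/\rho)$, so $\Delta=0$. Hence $\Delta\ne0$, and the solution is unique, exactly when $D$ is not a disk.

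The main obstacle is the regularity needed to invoke Serrin: Theorem \ref{thm:serrin} assumes a $C^3$ boundary, while here $\pa D$ is a priori only $C^2$. I would try first to bootstrap, using the equation on $\pa D$ (the curvature is expressed through $|\nabla v_D|$, which has one more degree of regularity than $\pa D$ by elliptic estimates). Failing that, I would use a version of Serrin's result valid for $C^2$ domains, e.g. Weinberger's argument, which needs only $v_D\in C^2(D)\cap C^1(\overline D)$.
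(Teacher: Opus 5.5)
Your proposal is correct and is essentially the paper's proof: integrate \eqref{eq:over.determined} against $1$ and against $|\nabla v_D|$ (using Gauss--Bonnet and \eqref{torsion.gradient.integral}), then apply Cauchy--Schwarz to the determinant, where equality forces $|\nabla v_D|$ to be constant on $\pa D$ and Serrin's theorem gives the disk. Your concern about the $C^3$ hypothesis in Serrin's theorem is legitimate; the paper deals with it only later, in the proof of Theorem \ref{thm:global}, via the same Schauder bootstrap you suggest. There is also a shortcut you could take: in the equality case, \eqref{eq:over.determined} directly makes $H_{\pa D}$ constant on $\pa D$, so $\pa D$ is a circle and Serrin's theorem is not needed at all.
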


\begin{proof}
The constants $c$ and $\beta$ are uniquely determined if $D$ is not a disk.
In fact, integrating \eqref{eq:over.determined} along $\pa D$, 
by \eqref{Gauss.Bonnet} we get
\[
\frac{\alpha_0^2}{2}\int_{\pa D} |\nabla v_D|^{2} \, d\sigma 
- \sigma_0 \beta \int_{\pa D} \frac{1}{ |\nabla v_D| }\, d\sigma  
+ 2 \pi \sigma_0 = c P(D).
\]
Moreover, multiplying \eqref{eq:over.determined} by $|\nabla v_D|$ 
and integrating along $\pa D$, by \eqref{torsion.gradient.integral} we have
\[
\frac{\alpha_0^2}{2}\int_{\pa D}  | \nabla v_D|^{3}\, d\sigma -\sigma_0\beta P(D) + \sigma_0 \int_{\pa D}H_{\pa D}|\nabla v_D|\, d\sigma= c\int_{\pa D} |\nabla v_D|\,d\s=c|D|.
\]
Therefore $\beta$ and $c$ satisfy \eqref{eq:syst.beta.c.i}, \eqref{eq:syst.beta.c.ii}.
This linear system has a unique solution if and only if the corresponding matrix has nonzero determinant, 
i.e., if and only if the difference  
\[
\Big( \int_{\pa D} \frac{1}{|\grad v_D|} \, d\sigma \Big) |D| - P^2(D)
\]
is nonzero.  
By H\"older inequality, 
\[
P(D) = \int_{\pa D} 1 \, d\sigma 
= \int_{\pa D} \frac{ |\grad v_D|^{\frac12} }{ |\grad v_D|^{\frac12} } \, d\sigma 
\leq 
\Big( \int_{\pa D} |\grad v_D| \, d\sigma \Big)^{\frac12} 
\Big( \int_{\pa D} \frac{1}{|\grad v_D|} \, d\sigma \Big)^{\frac12},
\]
with equality if and only if $|\nabla v_D|$ is constant, 
i.e., by Serrin theorem, if and only if $D$ is a disk. 
\end{proof}

\begin{lemma} \label{teo:consequence}
Assume the hypotheses of Theorem \ref{thm:4.1}. Then 
\begin{align} 
\sigma_0 \beta  
& = \frac{1}{P(D)} \Big\{ 
\frac{\alpha_0^2}{2} 
\Big( \int_{\pa D} |\nabla v_D|^{3}\, d\sigma - 3 \int_{D} |\nabla v_D|^2\,dx \Big) 
\notag \\ 
& \quad \ 
+ \sigma_0 \Big( \int_{\pa D} H_{\pa D} |\nabla v_D| \, d\sigma - P(D) \Big) \Big\}.
\label{resa.04}  
\end{align}
\end{lemma}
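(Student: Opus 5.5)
The identity \eqref{resa.04} should follow by eliminating $c$ between the second relation \eqref{eq:syst.beta.c.ii} and the explicit formula \eqref{eq:constant.c} of Lemma \ref{lem:constant.c}. The plan is to solve \eqref{eq:syst.beta.c.ii} for $\sigma_0\beta P(D)$, namely
\[
\sigma_0 \beta P(D) = \frac{\alpha_0^2}{2} \int_{\pa D} |\nabla v_D|^{3}\, d\sigma + \sigma_0 \int_{\pa D} H_{\pa D} |\nabla v_D| \, d\sigma - c|D|,
\]
and then to insert $c|D| = \frac{3\alpha_0^2}{2}\int_D |\nabla v_D|^2\,dx + \sigma_0 P(D)$, which is exactly \eqref{eq:constant.c} multiplied by $|D|$.

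Grouping the $\alpha_0^2$ terms gives $\frac{\alpha_0^2}{2}\big(\int_{\pa D}|\nabla v_D|^3\,d\sigma - 3\int_D|\nabla v_D|^2\,dx\big)$. Grouping the $\sigma_0$ terms gives $\sigma_0\big(\int_{\pa D} H_{\pa D}|\nabla v_D|\,d\sigma - P(D)\big)$. Dividing by $P(D)>0$ then yields \eqref{resa.04}.

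The only point needing care is that \eqref{eq:syst.beta.c.ii} holds in all cases, including when $D$ is a disk and the $2\times 2$ system is degenerate. We never invert the system, though. We use only the single relation \eqref{eq:syst.beta.c.ii}, obtained by multiplying \eqref{eq:over.determined} by $|\nabla v_D|$ and integrating, together with the independently derived formula \eqref{eq:constant.c}. Both hold under the hypotheses of Theorem \ref{thm:4.1} alone, so no uniqueness statement is required.

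The integrals involved must also be finite. This is guaranteed by Lemma \ref{lem:lower.bound}, since $|\nabla v_D|$ is bounded above and below on $\pa D$, and by the $C^2$ regularity of $\pa D$, which gives bounded $H_{\pa D}$. There is essentially no obstacle: the proof is a two-line algebraic substitution.
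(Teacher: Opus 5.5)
Your proposal is correct and is exactly the paper's proof: substitute \eqref{eq:constant.c}, multiplied by $|D|$, into \eqref{eq:syst.beta.c.ii} and divide by $P(D)$. Your remark is also accurate: the $2\times 2$ system never needs to be inverted, so the identity holds even when $D$ is a disk.
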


\begin{proof}
Plug \eqref{eq:constant.c} into \eqref{eq:syst.beta.c.ii}.
\end{proof}

\subsection{Torsion function and rigidity} 
\label{sec:global}

We now study in details the constant $\beta$. 
This will be the key to conclude our proof.

By \eqref{formula.in.lem:beta}, 
we see immediately that $\beta<0$, 
because the scalar product $\la x,\nu_{\pa D}\ra$ is positive 
for all $x \in \pa D$; in fact, this property holds for all star-shaped sets 
with respect to the origin.
To obtain a stronger bound on $\beta$, 
we use \eqref{resa.04}, starting with analysing the terms with coefficient $\sigma_0$. 
We use Reilly formula \eqref{Reilly.in.sec.2}.

\begin{lemma}\label{lem:reilly}
Let $D \subset \R^2$ be a bounded open convex set with $C^2$ boundary, 
and let $v_D$ be its torsion function. Then
\begin{equation}\label{eq:grad.curvature}
\int_{\pa D} H_{\pa D} |\nabla v_D|\, d\sigma \leq \frac{P(D)}{2}.
\end{equation}
\end{lemma}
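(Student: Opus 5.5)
The plan is to combine the Reilly formula \eqref{Reilly.in.sec.2} with the Gauss--Bonnet formula \eqref{Gauss.Bonnet} and the isoperimetric inequality, passing from the quadratic quantity $\int_{\pa D} H_{\pa D}|\nabla v_D|^2\,d\sigma$ to the linear one by Cauchy--Schwarz.

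First, apply \eqref{Reilly.in.sec.2} with $E=D$ and $u=v_D$. This is allowed since $v_D=0$ on $\pa D$. By \eqref{torsion.gradient} we have $\la \grad v_D,\nu_{\pa D}\ra^2=|\nabla v_D|^2$ on $\pa D$. Since $\Delta v_D=-1$, this gives
\[
\int_{\pa D} H_{\pa D}|\nabla v_D|^2\,d\sigma=\int_D \big(1-|\nabla^2 v_D|^2\big)\,dx .
\]
In dimension two, any symmetric $2\times 2$ matrix $A$ satisfies $|A|^2\ge \frac12(\mathrm{tr}A)^2$, by Cauchy--Schwarz on the eigenvalues. Hence $|\nabla^2 v_D|^2\ge \frac12(\Delta v_D)^2=\frac12$, and therefore
\[
\int_{\pa D} H_{\pa D}|\nabla v_D|^2\,d\sigma\le \frac{|D|}{2}.
\]
Some care is needed about regularity, because the paper only assumes $C^2$ boundary, so $v_D$ is $C^{1}$ up to the boundary with $H^2$ interior control. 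If necessary, I would justify Reilly's identity by approximating $D$ with smooth convex sets and passing to the limit.

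Second, since $D$ is convex, $H_{\pa D}\ge 0$. So Cauchy--Schwarz with weight $H_{\pa D}$ and then \eqref{Gauss.Bonnet} give
\[
\int_{\pa D} H_{\pa D}|\nabla v_D|\,d\sigma\le \Big(\int_{\pa D}H_{\pa D}\,d\sigma\Big)^{\frac12}\Big(\int_{\pa D}H_{\pa D}|\nabla v_D|^2\,d\sigma\Big)^{\frac12}\le \Big(2\pi\cdot\frac{|D|}{2}\Big)^{\frac12}=\sqrt{\pi|D|}.
\]

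Finally, the planar isoperimetric inequality $4\pi|D|\le P(D)^2$ yields $\sqrt{\pi|D|}\le P(D)/2$, which is \eqref{eq:grad.curvature}. I do not expect a real obstacle here. The only delicate point is the validity of the Reilly formula under $C^2$ regularity, which I would handle by the approximation argument above. The sign condition $H_{\pa D}\ge 0$, coming from convexity, is exactly what makes the weighted Cauchy--Schwarz step legitimate.
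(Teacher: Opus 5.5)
Your proposal is correct and follows essentially the same route as the paper: Reilly's formula with $u=v_D$, the pointwise bound $|\nabla^2 v_D|^2\ge \frac12(\Delta v_D)^2$ giving $\int_{\pa D}H_{\pa D}|\nabla v_D|^2\,d\sigma\le |D|/2$, then Cauchy--Schwarz weighted by $H_{\pa D}\ge 0$, Gauss--Bonnet, and the isoperimetric inequality. Your remark on justifying Reilly's identity under $C^2$ regularity via smooth convex approximation is an extra point of care that the paper does not address explicitly.
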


\begin{proof} 
To lighten the notation, in this proof we denote $v$ the torsion function $v_D$ of the set $D$, 
$\nu$ the normal vector $\nu_{\pa D}$, 
and $H$ the mean curvature $H_{\pa D}$. 
Reilly formula \eqref{Reilly.in.sec.2} applied to the set $D$ 
and the torsion function $v$ is
\[
\int_{\pa D} H \la \grad v, \nu \ra^2\, d\sigma 
= \int_D \Big( (\Delta v)^2 - |\nabla^2 v|^2 \Big) \, dx,
\]
where $\grad^2 v$ is the Hessian matrix of $v$. 
By Cauchy-Schwarz inequality, 
one has the pointwise inequality for the Hessian matrix and its trace 
\[
\frac{(\Delta v)^2}{2} \leq	 |\nabla^2 v|^2.
\]
Therefore, by \eqref{torsion.gradient},
\[
\int_{\pa D} H |\nabla v|^2\, d\sigma 
= \int_{\pa D} H \la \grad v , \nu \ra^2 \, d\sigma 
\leq \frac12 \int_D (\Delta v)^2\, dx
= \frac{|D|}{2}.
\]
Since $D$ is convex, $H \geq 0$.
Hence, by H\"older inequality, Gauss-Bonnet formula \eqref{Gauss.Bonnet}, 
and the isoperimetric inequality, we have
\[
\int_{\pa D} H |\nabla v|\, d\sigma
\leq \Big( \int_{\pa D} H |\nabla v|^2\, d\sigma \Big)^\frac12 
\Big( \int_{\pa D} H \, d\sigma \Big)^\frac12  
\leq \sqrt{\pi |D|}
\leq \frac{P(D)}{2}. 
\qedhere
\]
\end{proof}

Now we study the terms with coefficient $\alpha_0^2$ in \eqref{resa.04}. 

\begin{lemma} \label{lem:det.01}
Let $D, v_D$ be like in Lemma \ref{lem:reilly}. Then 
\begin{gather} \label{int.det.new}
2 \int_D v_D \det(\grad^2 v_D) \, dx
= \int_D |\grad v_D|^2 \, dx
+ \int_D \la (\grad^2 v_D) \grad v_D , \grad v_D \ra \, dx,
\\
\label{eq:determinant}
\int_{\pa D} |\nabla v_D|^3\, d\sigma 
= 3\int_{D} |\nabla v_D|^2\,dx - 4 \int_{D} v_D \det (\nabla^2 v_D) \,dx,
\end{gather}
where $\nabla^2 v_D$ is the Hessian matrix of $v_D$. 
\end{lemma}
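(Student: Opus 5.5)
Write $v=v_D$. Both identities follow from two-dimensional integration by parts, using the divergence structure of the Hessian determinant: in the plane, $2\det(\grad^2 v) = \div\big(\Delta v\,\grad v - (\grad^2 v)\grad v\big)$. Since $\Delta v = -1$ is constant, this reduces to
\[
2\det(\grad^2 v) = -\div\big(\grad v + (\grad^2 v)\grad v\big).
\]
The right-hand side can be checked componentwise; it is the classical identity $\det \grad^2 v = \frac12\big((\Delta v)^2 - |\grad^2 v|^2\big)$ rewritten in divergence form. For the regularity we need, I assume $v\in C^2(\overline D)\cap C^3(D)$ from elliptic theory. If boundary regularity is insufficient, I apply everything on the smooth superlevel sets $D_s$ and let $s\to0^+$, as in Lemma \ref{lem:constant.c}.

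\textbf{Proof of \eqref{int.det.new}.} Multiply the divergence identity by $v$ and integrate over $D$. Because $v=0$ on $\pa D$, the boundary term disappears, and we get
\[
2\int_D v\det(\grad^2 v)\,dx = \int_D \la \grad v, \grad v + (\grad^2 v)\grad v\ra\,dx,
\]
which is exactly \eqref{int.det.new}.

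\textbf{Proof of \eqref{eq:determinant}.} We evaluate $\int_D \la(\grad^2 v)\grad v,\grad v\ra\,dx = \frac12\int_D \la \grad(|\grad v|^2),\grad v\ra\,dx$ by integrating by parts once more. Using $\Delta v=-1$,
\[
\frac12\int_D \la \grad(|\grad v|^2),\grad v\ra\,dx
= \frac12\int_{\pa D}|\grad v|^2\la\grad v,\nu_{\pa D}\ra\,d\sigma + \frac12\int_D|\grad v|^2\,dx.
\]
By \eqref{torsion.gradient}, $\la\grad v,\nu_{\pa D}\ra = -|\grad v|$ on $\pa D$, so the boundary term equals $-\frac12\int_{\pa D}|\grad v|^3\,d\sigma$. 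Substituting this into \eqref{int.det.new} gives
\[
2\int_D v\det(\grad^2 v)\,dx = \frac32\int_D|\grad v|^2\,dx - \frac12\int_{\pa D}|\grad v|^3\,d\sigma.
\]
Multiplying by $2$ and rearranging yields \eqref{eq:determinant}.

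The only delicate point is justifying the integrations by parts up to the boundary, since they involve second and third derivatives of $v$. For $C^2$ domains I would handle this with the approximation by $D_s$ described above. Uniform convergence of $|\grad v|$ near $\pa D$, guaranteed by Lemma \ref{lem:lower.bound} and $v\in C^1(\overline D)$, controls the boundary terms in that limit.
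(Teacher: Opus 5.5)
Your proof is correct and is essentially the paper's: the same two integrations by parts. Your identity $2\det(\grad^2 v) = -\div\big(\grad v + (\grad^2 v)\grad v\big)$ combines into one step the paper's use of $\div\{v(\grad^2 v)\grad v\}$, $|\grad^2 v|^2 = 1 - 2\det(\grad^2 v)$ and $\int_D v\,dx = \int_D |\grad v|^2\,dx$, and your second step is the paper's expansion of $\int_D \div\{|\grad v|^2\grad v\}\,dx$. The $D_s$ approximation covers boundary regularity that the paper leaves implicit, with one correction: the extra term $s\int_{\pa D_s}\la(\grad^2 v)\grad v,\nu_{\pa D_s}\ra\,d\sigma = s\int_{D_s}|\grad^2 v|^2\,dx$ involves second derivatives. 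It is therefore controlled by $v\in H^2(D)$, not by uniform convergence of $|\grad v|$ alone.
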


\begin{proof} 
To lighten the notation, in this proof we denote $v$ the torsion function $v_D$, 
$v_j = \pa_{x_j} v$, $v_{jk} = \pa_{x_j x_k} v$ its partial derivatives,
and $\nu$ the normal $\nu_{\pa D}$.
By \eqref{torsion.gradient} and the divergence theorem, 
\begin{align}
\int_{\pa D} |\nabla v|^3\, d\sigma 
& = - \int_{\pa D} |\grad v|^2 \la \grad v, \nu \ra \, d\sigma
\notag \\ 
& = - \int_D \div \{ |\grad v|^2 \grad v \} \,dx
\notag \\ & 
= - \int_D |\grad v|^2 \Delta v \, dx 
- 2 \int_D \la (\grad^2 v) \grad v , \grad v \ra \, dx.
\label{int.3.2}
\end{align}
One has 
\[
\int_D |\grad v|^2 \Delta v \, dx 
= - \int_D |\grad v|^2 \, dx 
\]
because $\Delta v = -1$ in $D$. 
For the other integral, we apply the divergence theorem to the vector field 
$F = (\grad^2 v)(\grad v) v$, and we get 
\[
\int_D \div \{ (\grad^2 v)(\grad v) v \} \, dx 
= \int_D \div F \, dx 
= \int_{\pa D} \la F, \nu \ra \, d\sigma 
= 0
\]
because $v = 0$ on $\pa D$, whence also $F = 0$ on $\pa D$.
We calculate 
\begin{align*}
\div \{ (\grad^2 v)(\grad v) v \} 
& = \div \{ (\grad^2 v)(\grad v) \} v + \la (\grad^2 v)(\grad v) , \grad v \ra
\end{align*}
and 
\begin{align*}
\div \{ (\grad^2 v)(\grad v) \} 
= \sum_{k,j} \pa_{x_k} (v_{kj} v_j) 
= \sum_{k,j} v_{kkj} v_j + \sum_{k,j} v_{kj}^2
= \la \grad ( \Delta v) , \grad v \ra + |\grad^2 v|^2 
= |\grad^2 v|^2, 
\end{align*}
because $\grad (\Delta v) = \grad (-1) = 0$. Moreover, 
\[
|\grad^2 v|^2 
= v_{11}^2 + 2 v_{12}^2 + v_{22}^2
= (v_{11} + v_{22})^2 - 2 v_{11} v_{22} + 2 v_{12}^2
= (\Delta v)^2 - 2 \det(\grad^2 v).
\]
Since $(\Delta v)^2 = 1$, we obtain 
\begin{align*}
\int_D \la (\grad^2 v) \grad v , \grad v \ra \, dx
& = - \int_D v \div \{ (\grad^2 v)(\grad v) \} \, dx 
\\ 
& = - \int_D v |\grad^2 v|^2 \, dx 
\\ 
& = - \int_D v \{ 1 - 2 \det(\grad^2 v) \} \, dx 
\\ 
& = - \int_D v \, dx + 2 \int_D v \det(\grad^2 v) \, dx, 
\end{align*}
and, recalling \eqref{torsion.integral}, we get \eqref{int.det.new}.
Then \eqref{eq:determinant} trivially follows from \eqref{int.3.2} and \eqref{int.det.new}.
\end{proof}

When $D$ is an ellipse, 
the torsion function $v_D$ is concave (see \cite{HNST}), 
and therefore the eigenvalues of the Hessian matrix $\nabla^2 v_D$ are both $\leq 0$; 
this implies that $\det(\nabla^2 v_D) \geq 0$, and hence, by \eqref{eq:determinant}, 
\begin{equation}\label{eq:upper.bound}
\int_{\pa D}|\nabla v_D|^3\, d\sigma \leq 3\int_{D}|\nabla v_D|^2\, d\sigma.
\end{equation}

For a more general convex set $D$, the torsion function $v_D$ is $\frac12$-concave, 
that is, $\sqrt{ v_D }$ is concave, 
but, in general, $v_D$ itself is not concave, 
so that we cannot use the inequality $v_D \det(\nabla^2 v_D) \geq 0$ \emph{pointwise}. 
Nonetheless, in the next lemma we show that inequality \eqref{eq:upper.bound} holds for general convex sets.

\begin{lemma}\label{lem:inequality.for.gradient1}
Let $D, v_D$ be like in Lemma \ref{lem:reilly}. Then 
\begin{equation} \label{int.det.pos}
\int_D v_D \det(D^2 v_D) \, dx \geq 0,
\end{equation}
and \eqref{eq:upper.bound} holds. 
\end{lemma}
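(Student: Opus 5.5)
The idea is to write the integrand in divergence form, integrate by parts once, and recognise the resulting boundary-free integrand as the curvature of the level curves of $v_D$. That curvature has a sign because the superlevel sets are convex. Throughout, write $v=v_D$ and let $\mJ_2$ be the rotation matrix \eqref{def.mJ.2.3}.

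\emph{Divergence identity.} In dimension two the cofactor matrix of the Hessian is
\[
\operatorname{adj}(\grad^2 v) = \mJ_2^{T} (\grad^2 v) \mJ_2 ,
\]
and its rows are divergence free (Piola identity). Consequently
\[
\div\{\operatorname{adj}(\grad^2 v)\grad v\} = \operatorname{tr}\big(\operatorname{adj}(\grad^2 v)\grad^2 v\big) = 2\det(\grad^2 v).
\]
This can also be checked directly in coordinates, in the same style as the proof of Lemma \ref{lem:det.01}.

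\emph{Integration by parts.} Multiply by $v$ and integrate over the superlevel set $D_s=\{v>s\}$, whose boundary is $\mE_s$. Integrating by parts gives
\[
\int_{D_s} v\det(\grad^2 v)\,dx = \frac{s}{2}\int_{\mE_s}\la \operatorname{adj}(\grad^2 v)\grad v,\nu\ra\,d\sigma - \frac12\int_{D_s}\la (\grad^2 v)\mJ_2\grad v,\mJ_2\grad v\ra\,dx .
\]

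\emph{Identifying the integrand.} The vector $\mJ_2\grad v$ is tangent to the level curve $\mE_t$ and has length $|\grad v|$. Hence $\la(\grad^2 v)\mJ_2\grad v,\mJ_2\grad v\ra$ is $|\grad v|^2$ times the tangential second derivative of $v$ along $\mE_t$. Expanding \eqref{curvatura.level.set} (equivalently, the formula for $H_{\mE_s}$ derived before \eqref{eq:on.level.set}) gives
\[
\la(\grad^2 v)\mJ_2\grad v,\mJ_2\grad v\ra = -H_{\mE_t}|\grad v|^3 .
\]
By Theorem \ref{thm:torsion.square.root.concave}, $\sqrt v$ is concave, so every superlevel set of $v$ is convex. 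Therefore $H_{\mE_t}\ge 0$ on every regular level curve, which is every level except the single maximum point $x_*$, a null set. So the interior term equals $\frac12\int_{D_s}H_{\mE_{v}}|\grad v|^3\,dx\ge 0$.

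\emph{Limit $s\to0^+$.} It remains to show that the boundary term vanishes in the limit. This is the step I expect to be the main technical obstacle, since we only know $D$ is $C^2$ and $v\in C^1(\overline D)$. The factor $s$ multiplies $\int_{\mE_s}|\grad^2 v|\,|\grad v|\,d\sigma$. I would bound this either:
\begin{itemize}
\item by interior Schauder-type estimates near $\pa D$, showing $|\grad^2 v|$ grows at most like $\operatorname{dist}^{-1+\epsilon}$, so the term is $o(1)$; or
\item more simply, by assuming enough boundary regularity ($C^{2,\alpha}$, which may be recovered by approximating $D$ with smooth convex sets and using stability of $v_D$), so that $\grad^2 v$ is bounded up to the boundary and the term is $O(s)$.
\end{itemize}
Since the interior integrands are nonnegative, monotone convergence passes the limit and yields \eqref{int.det.pos}.

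\emph{Conclusion.} Inserting \eqref{int.det.pos} into \eqref{eq:determinant} immediately gives
\[
\int_{\pa D}|\grad v|^3\,d\sigma\le 3\int_D|\grad v|^2\,dx ,
\]
which is \eqref{eq:upper.bound}.
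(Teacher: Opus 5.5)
Your proof is correct, but the sign comes from a different pointwise fact than in the paper. The integral identity underneath is the same. Letting $s\to0^+$ in your integration by parts gives $\int_D v\det(\Hess v)\,dx=-\tfrac12\int_D Q\,dx$ with $Q:=\la(\Hess v)\mJ_2\grad v,\mJ_2\grad v\ra$. This is just \eqref{int.det.new} rewritten through the trace identity $Q=-|\grad v|^2-\la(\Hess v)\grad v,\grad v\ra$, which holds because $\Delta v=-1$, so you could have quoted \eqref{int.det.new} instead of redoing the computation on $D_s$.

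The paper uses $\frac12$-concavity in determinant form, $\det\Hess(v^{1/2})\ge0$. This gives the pointwise inequality $2v\det(\Hess v)\ge Q$, which it integrates and combines with \eqref{int.det.new} to get $4\int_D v\det(\Hess v)\,dx\ge0$. You instead prove $Q=-H_{\mE_{v(x)}}|\grad v|^3\le0$ pointwise. That needs only convexity of the superlevel sets, i.e.\ concavity of $\sqrt v$ tested in the single direction $\mJ_2\grad v$. At critical points $Q=0$ trivially, so you do not even need uniqueness of the maximum point. Your route therefore uses a weaker property and yields the exact formula $\int_D v\det(\Hess v)\,dx=\tfrac12\int_D H_{\mE_{v(x)}}(x)\,|\grad v(x)|^3\,dx$. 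The paper's route is shorter because it recycles Lemma \ref{lem:det.01}.

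The step you flagged as the main obstacle is easier than you expect. Since $\nu=-\grad v/|\grad v|$ on $\mE_s$, one has $\la\operatorname{adj}(\Hess v)\grad v,\nu\ra=-Q/|\grad v|=H_{\mE_s}|\grad v|^2$. So your boundary term equals $\tfrac s2\int_{\mE_s}H_{\mE_s}|\grad v|^2\,d\sigma$. By Gauss--Bonnet \eqref{Gauss.Bonnet} on the convex curve $\mE_s$, this lies in $[0,\pi s\|\grad v\|_{L^\infty(D)}^2]$, and no Hessian bound is needed.

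Your option (a) is still useful for something else. From $v\in C^{1,\alpha}(\overline D)$ and interior estimates you get $|\Hess v(x)|\lesssim \mathrm{dist}(x,\pa D)^{\alpha-1}$, hence $v\det(\Hess v)\in L^1(D)$. That integrability is what makes the left-hand side $\int_{D_s}$ converge to $\int_D$; monotone convergence only handles your right-hand side. The same bound also justifies the step in Lemma \ref{lem:det.01} where the paper takes the boundary flux of $F=(\Hess v)(\grad v)v$ to be zero, even though $\pa D$ is only $C^2$.
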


\begin{proof} 
To lighten the notation, in this proof we denote $v$ the torsion function $v_D$, 
$v_j = \pa_{x_j} v$, $v_{jk} = \pa_{x_j x_k} v$ its partial derivatives,
and $\nu$ the normal $\nu_{\pa D}$.
From \cite{ML}, we know that $v$ is $\frac12$-concave, that is, the Hessian matrix of $v^{\frac12}$ 
is a nonpositive definite matrix. 
Hence its determinant (product of its two eigenvalues) is $\geq 0$. 
We calculate the partial derivatives 
\[
\pa_{ij} (v^{\frac12}) 
= \frac{ 2 v v_{ij} - v_i v_j }{ 4 v^{3/2} },
\]
and 
\begin{align*}
0 \leq \det[ \Hess (v^{\frac12}) ] 
& = \frac{1}{16 v^3} \big\{ (2 v v_{11} - v_1^2)(2 v v_{22} - v_2^2) - (2 v v_{12} - v_1 v_2)^2 \big\} 
\\ 
& = \frac{\det (\Hess v)}{4v} 
- \frac{ v_1^2 v_{22} + v_2^2 v_{11} - 2 v_1 v_2 v_{12} }{8 v^2}.
\end{align*}
Multiplying by $8 v^2$, 
\[
2 v \det (\Hess v) \geq v_1^2 v_{22} + v_2^2 v_{11} - 2 v_1 v_2 v_{12}.
\]
Since $v_{11} + v_{22} = \Delta v = -1$, we have 
\begin{align*}
v_1^2 v_{22} + v_2^2 v_{11} - 2 v_1 v_2 v_{12}
& = v_1^2 (- 1 - v_{11}) + v_2^2 (- 1 - v_{22}) 
- 2 v_1 v_2 v_{12}
\\ 
& = - |\grad v|^2 - \la (\Hess v) \grad v , \grad v \ra.
\end{align*}
Integrating over $D$, and recalling \eqref{int.det.new}, we obtain
\[
2 \int_D v \det (\Hess v) \, dx 
\geq - \int_D |\grad v|^2 \, dx - \int_D \la (\Hess v) \grad v , \grad v \ra \, dx
= - 2 \int_D v \det (\Hess v) \, dx,
\]
which gives \eqref{int.det.pos}. 
Then \eqref{eq:upper.bound} follows trivially from \eqref{int.det.pos} and \eqref{eq:determinant}.
\end{proof}

Now we have all the ingredients to prove Theorem \ref{thm:global}.

\begin{proof}[\textbf{Proof of Theorem \ref{thm:global}.}]
From \eqref{resa.04}, \eqref{eq:grad.curvature}, \eqref{eq:upper.bound} 
(see Lemma \ref{lem:inequality.for.gradient1}), 
it follows that 
\begin{equation} \label{beta.1.2}
\beta \leq -\frac12.
\end{equation}
By Theorem \ref{thm:4.1}, the torsion function $v_D$ satisfies \eqref{eq:over.determined}, that is, 
\[
F_1(|\nabla v_D|) = c - \s_0 H_{\pa D} \quad \ \text{on } \pa D,
\]
where $F_1$ is the real function 
\[ 
F_1 : (0, \infty) \to \R, \quad \ 
F_1(t) := \frac{\alpha_0^2}{2} t^2 -\frac{\sigma_0\beta}{t} 
= \frac{\alpha_0^2}{2} t^2 + \frac{\sigma_0 |\beta|}{t}.
\]
The function $F_1$ is decreasing in the interval $(0, t_1]$, where 
\[
t_1 := \Big( \frac{\sigma_0 |\beta|}{\alpha_0^2} \Big)^{\frac13}.
\]
By \eqref{beta.1.2}, one has $|\beta| \geq \frac12$, whence
\[
t_1 \geq \Big( \frac{\sigma_0}{2\alpha_0^2} \Big)^{\frac13}.
\]
By Theorem \ref{teo:grad.estimate}, 
\[ 
\|\nabla v_D\|_{L^\infty(D)} < \sqrt{\frac{|D|}{2\pi}}.
\] 
Therefore, if 
\begin{equation} \label{cond.01}
\sqrt{\frac{|D|}{2\pi}} \leq t_1,
\end{equation}
then $|\nabla v_D| \in (0,t_1)$. 
Note that inequality \eqref{eq:sigma.alpha} implies \eqref{cond.01}. 
Since $F_1$ is invertible in $(0,t_1]$, 
with decreasing inverse $F_1^{-1}$, 
we have 
\begin{equation} \label{eq:non.increasing}
\frac{\pa v_D}{\pa \nu_{\pa D}} 
= \la \grad v_D , \nu_{\pa D} \ra 
= -|\nabla v_D|
= - F_1^{-1}(c-\sigma_0 H_{\pa D}) 
=: \mathrm f(H_{\pa D}),
\end{equation}
and $\mathrm f$ is decreasing.  
To apply Serrin Theorem \ref{thm:serrin} to the set $D$, 
we just need to show that its boundary $\pa D$ is of class $C^3$. 
This is achieved by a simple bootstrap argument based on Schauder regularity theory 
(see \cite{GT}, Chapter 6, and \cite{FR.Ros.Oton.2022}).
By assumption, the boundary $\pa \Omega$ of the fluid domain $\Omega \subset \R^3$ is of class $C^2$, 
hence the boundary $\pa D$ of the planar set $D \subset \R^2$ in \eqref{eq:omega.is.graph} is also of class $C^2$. 
Thus, in particular, $\pa D$ is of class $C^{1,\alpha}$ for all $\alpha \in (0,1)$, 
and therefore (see \cite{FR.Ros.Oton.2022}, page 69, at the end of section 2.6), 
the torsion function $v_D$, which is the solution of \eqref{eq:torsion},  
is $C^{1,\alpha}(\overline{D})$ for any $\alpha \in (0,1)$. 
Since $\nabla v_D$ is nonzero on $\pa D$, with lower and upper bound in Lemma \ref{lem:lower.bound}, 
from identity \eqref{eq:over.determined} we obtain that
\begin{equation} \label{H.0.alpha}
H_{\pa D} \in C^{0,\alpha}(\pa D).
\end{equation}
Since we already know that the curve $\pa D$ is of class $C^2$, 
\eqref{H.0.alpha} implies that $\pa D$ is of class $C^{2,\alpha}$.
The proof is classical (standard frozen coefficients argument for linear elliptic differential operators). 
Now, since $\pa D$ is $C^{2,\alpha}$, the torsion function $v_D$ is $C^{2,\alpha}(\overline D))$
(see Theorem 2.35 in \cite{FR.Ros.Oton.2022}, page 69). 
Hence $H_{\pa D} \in C^{1,\alpha}(\pa D)$, which implies that $\pa D$ is of class $C^{3,\alpha}$. 
Since $\pa D$ is $C^3$, we apply Theorem \ref{thm:serrin}, so that $D$ is a disk. 
Thus, Remark \ref{rem:lopez} applies, 
and the proof of Theorem \ref{thm:global} is complete.
\end{proof}

\begin{remark}\label{rmk:final}
Theorem \ref{thm:global} holds also assuming $\Omega$ convex, instead of strictly convex.
The only change is that $\pa \Omega$ may also contain a cylindrical part between the two graphical parts. 
The arguments in section \ref{sec:necessary.condition} and \ref{sec:global} 
that lead to prove that $D$ is a disk 
are still valid (the notation becomes a bit heavier but the proofs work). 
Finally, when computing the exact solution following Remark \ref{rem:lopez}, 
one can show that $\Omega$ is necessarily strictly convex. 
Indeed, $\pa \Omega$ would contain a cylindrical part 
if the mean curvature of $\pa \Omega$ at the curve joining the graphical part 
and the cylindrical part would not match, 
hence one would have a discontinuous mean curvature, 
in contradiction with the assumption that $\pa \Omega$ is of class $C^2$.
Thus, we use strictly convexity as a simplifying assumption, 
but in fact it is not necessary to show our rigidity result. 
\end{remark}
 
%

\bigskip

\begin{flushright}

\textbf{Pietro Baldi}

Dipartimento di Matematica e Applicazioni ``R. Caccioppoli''

University of Naples Federico II

Via Cintia, Monte Sant'Angelo, 80126 Naples, Italy

pietro.baldi@unina.it

\medskip

\textbf{Domenico Angelo La Manna}

Dipartimento di Matematica e Applicazioni ``R. Caccioppoli''

University of Naples Federico II

Via Cintia, Monte Sant'Angelo, 80126 Naples, Italy

domenicoangelo.lamanna@unina.it

\medskip

\textbf{Giuseppe La Scala}

Mathematical and Physical Sciences for Advanced Materials and Technologies

Scuola Superiore Meridionale

Via Mezzocannone, 4, 80138 Naples, Italy

giuseppe.lascala-ssm@unina.it

\end{flushright}

\end{document}